\newtheorem{thm}{Theorem}[section]
\newtheorem{thmintro}{Theorem}
\newtheorem{lem}[thm]{Lemma}
\newtheorem{prop}[thm]{Proposition}
\newtheorem{cor}[thm]{Corollary}
\theoremstyle{definition}
\newtheorem{defn}[thm]{Definition}
\theoremstyle{remark}
\newtheorem{remark}[thm]{Remark}
\newtheorem{exam}[thm]{Example}
\newtheoremstyle{outlinenotes}{}{}{\color{blue}}{}{\color{blue}\bfseries}{}{ }{}
\theoremstyle{outlinenotes}
\newtheoremstyle{todonotes}{}{}{\color{red}}{}{\color{red}\bfseries}{}{ }{}
\theoremstyle{todonotes}
\newcommand{\Deltaac}{\Delta_{\rm ac}}
\newcommand{\Deltain}{\Delta_{\rm in}}
\newcommand{\Simp}{\mathbb{\Delta}}
\newcommand{\Simpop}{\Simp^{\rm op}}
\newcommand{\Simpac}{\Simp_{\rm ac}}
\newcommand{\Simpin}{\Simp_{\rm in}}
\newcommand{\Fin}{{\rm Fin}}
\newcommand{\cFin}{\mathcal{F}{\rm in}}
\newcommand{\sSetf}{\Fin_\Delta}
\newcommand{\sSetfop}{(\sSetf)^{\rm op}}
\newcommand{\csSetf}{\cFin_\Simp}
\newcommand{\csSetfop}{(\csSetf)^{\rm op}}
\newcommand{\tSetf}{\Fin_{\Delta^2}}
\newcommand{\tSetfop}{(\tSetf)^{\rm op}}
\newcommand{\ctSetf}{\cFin_{\Simp^2}}
\newcommand{\ctSetfop}{(\ctSetf)^{\rm op}}
\newcommand{\Finpt}{\Fin_*}
\newcommand{\cFinpt}{\cFin_*}
\newcommand{\FinDel}{\Finpt\times\Delta^{\rm op}}
\newcommand{\cFinDel}{\cFinpt \times \Simpop}
\newcommand{\Cat}{{\rm Cat}}
\newcommand{\qCat}{{\rm qCat}}
\newcommand{\Catoo}{\mathcal{C}{\rm at}_\infty}
\newcommand{\Bicatoo}{\mathcal{C}{\rm at}_{\inftwo}}
\newcommand{\smBicatoo}{\Bicatoo^\otimes}
\newcommand{\Alg}{{\rm Alg}}
\newcommand{\cAlg}{\mathcal{A}{\rm lg}}
\newcommand{\csmAlg}{\cAlg^\amalg}
\newcommand{\Coalg}{{\rm Coalg}}
\newcommand{\cCoalg}{\mathcal{C}{\rm oalg}}
\newcommand{\csmCoalg}{\cCoalg^\amalg}
\newcommand{\Bialg}{{\rm Bialg}}
\newcommand{\cBialg}{\mathcal{B}{\rm ialg}}
\newcommand{\csmBialg}{\cBialg^\amalg}
\newcommand{\Fun}{{\rm Fun}}
\newcommand{\cFun}{\mathcal{F}{\rm un}}
\newcommand{\Map}{{\rm Map}}
\newcommand{\cMap}{\mathcal{M}{\rm ap}}
\newcommand{\cArr}[1]{\mathcal{A}\mathrm{rr}_{{#1}}}
\newcommand{\ctSeg}[1]{2\mathcal{S}\mathrm{eg}({#1})}
\newcommand{\apExact}{\mathrm{pExact}_+}
\newcommand{\capExact}{\mathrm{p}\mathcal{E}\mathrm{xact}_+}
\def\cSpan#1{\mathcal{S}{\rm pan}\left({#1}\right)}
\def\csmtSpan#1{\mathcal{S}{\rm pan}_2^\times \, {#1}}
\def\Pyr#1{\Sigma^{#1}}
\def\cPyr#1{\mathbb{\Sigma}^{#1}}
\def\Wedge#1{\Lambda^{#1}}
\def\cWedge#1{\mathbb{\Lambda}^{#1}}
\def\Pyrnc#1{\doublewedge^{#1}}
\def\Cart#1{\Pi{#1}}
\def\cCart#1{\mathbb{\Pi}#1}
\def\Cartop#1{\Pi{#1}^{\rm op}}
\def\cCartop#1{\mathbb{\Pi}#1^{\rm op}}
\def\Sing#1{P{#1}}
\def\cSing#1{\mathcal{P}{#1}}
\def\Singop#1{P{#1}^{\rm op}}
\def\cSingop#1{\mathcal{P}{#1}^{\rm op}}
\def\Unstr#1{\left\langle{#1}\right\rangle}
\def\sp#1{{\rm sp}\left(#1\right)}
\newcommand{\cA}{\mathcal{A}}
\newcommand{\cB}{\mathcal{B}}
\newcommand{\fB}{\mathfrak{B}}
\newcommand{\cC}{\mathcal{C}}
\newcommand{\cD}{\mathcal{D}}
\newcommand{\cE}{\mathcal{E}}
\newcommand{\cF}{\mathcal{F}}
\newcommand{\fG}{\mathfrak{G}}
\newcommand{\cJ}{\mathcal{J}}
\newcommand{\cM}{\mathcal{M}}
\newcommand{\cN}{\mathcal{N}}
\newcommand{\cS}{\mathcal{S}}
\newcommand{\cX}{\mathcal{X}}
\newcommand{\cY}{\mathcal{Y}}
\newcommand{\inftwo}{(\infty,2)}
\newcommand\Sp{\mathcal{S}{\rm p}}
\newcommand\Un{{\rm Un}}
\newcommand{\actmorR}{\rightarrow\Mapsfromchar}
\newcommand{\actmorL}{\Mapstochar\leftarrow}
\newcommand\inmorR{\rightarrowtail}
\newcommand\inmorL{\leftarrowtail}
\newcommand\op{{\rm op}}
\def\Simplex#1{\Delta\left[{#1}\right]}
\def\Simplexn#1{\nabla\left[{#1}\right]}
\newcommand{\id}{{\rm Id}}
\renewcommand{\lim}{\operatornamewithlimits{lim}}
\newcommand{\adjRelayII}[3][2.2em]{\ensuremath{\SelectTips{cm}{10}\xymatrix@C=#1@1{{#2} \ar@<1ex>[r]^-{\ArgI}^-{}="1" & {#3} \ar@<1ex>[l]^-{\ArgII}^-{}="2" \ar@{}"1";"2"|(.3){\hbox{}}="7" \ar@{}"1";"2"|(.7){\hbox{}}="8" \ar@{|-} "8" ;"7"}}}
\newcommand{\radjRelayII}[3][2.2em]{\ensuremath{\SelectTips{cm}{10}\xymatrix@C=#1@1{{#2} \ar@<-1ex>[r]_-{\ArgI}^-{}="1" & {#3} \ar@<-1ex>[l]_-{\ArgII}^-{}="2" \ar@{}"1";"2"|(.3){\hbox{}}="7" \ar@{}"1";"2"|(.7){\hbox{}}="8" \ar@{|-} "7" ;"8"}}}
\numberwithin{equation}{section}
\title{The universal Hall bialgebra of a double $2$-Segal space}
\author{Mark D Penney\footnote{email: \href{mailto:mpenney@mpim-bonn.mpg.de}{mpenney@mpim-bonn.mpg.de}}}
\date{}
\begin{document}
 \maketitle
 \begin{abstract}
Hall algebras and related constructions have had diverse applications in mathematics and physics, ranging from representation theory and quantum groups to Donaldson-Thomas theory and the algebra of BPS states. The theory of $2$-Segal spaces was introduced independently by Dyckerhoff--Kapranov and G\'alvez-Carrillo--Kock--Tonks as a unifying framework for Hall algebras: every $2$-Space defines an algebra in the $\infty$-category of spans, and different Hall algebras correspond to different linearisations of this universal Hall algebra.

A recurring theme is that Hall algebras can often be equipped with a coproduct which makes them a bialgebra, possibly up to a `twist'. In this paper will explain the appearance of these bialgebraic structures using the theory of $2$-Segal spaces: We construct the universal Hall bialgebra of a double $2$-Segal space, which is a lax bialgebra in the $\inftwo$-category of bispans. Moreover, we show how examples of double $2$-Segal spaces arise from Waldhausen's $S$-construction.
 \end{abstract}
 
 \tableofcontents
 
%ALL REFERENCES TO ALG PAPER ARE TO ARXIV V1!!

\section{Introduction}
\label{intro}

Dyckerhoff--Kapranov \cite{DK12} and G\'alvez-Carrillo--Kock--Tonks \cite{KockI} have independently introduced the notion of a {\em $2$-Segal space} as a unifying perspective on the various Hall algebra constructions that appear in the literature. The idea is that every $2$-Segal space $\cX$ defines an algebra object in the $\infty$-category of spans called the {\em universal Hall algebra} of $\cX$. Then different flavours of Hall algebras correspond to different choices in how to linearise this universal Hall algebra. For example, the Ringel-Hall algebra linearises via locally constant functions \cite{Ringel}, To\"en's derived Hall algebras via homotopy cardinality \cite{toen2006derived}, Joyce's motivic Hall algebras via stack functions \cite{joyce2007configurations} and Lusztig's categorification of the positive part of quantum groups via perverse sheaves \cite{lusztig1991quivers}.

A recurring theme is that the various Hall algebras can be equipped with a coproduct which makes them a bialgebra, possibly up to a `twist'. The canonical example of this is Green's theorem for the Ringel-Hall algebra \cite{Green}, but similar results appear in the work of Joyce and Lusztig. In this paper we will show how to explain the appearance of these bialgebraic structures using the theory of $2$-Segal spaces: We construct the {\em universal Hall bialgebra of a double $2$-Segal space}, which is a lax bialgebra in the $\inftwo$-category of bispans. 

Our approach draws significant inspiration from Dyckerhoff's proof of Green's theorem in the language of $2$-Segal spaces \cite{D15}: Waldhausen's $S$-construction applied to an abelian category $A$ produces a $2$-Segal space $S_\bullet A: \Simpop \to \cS$ with $S_n A$ the moduli space of length $n$ flags in $A$. The universal Hall algebra of $A$ is the algebra object in the $\infty$-category of spans whose underlying space is $S_1 A$ with product and unit given by
\begin{equation}
\label{IntroHallprod}
 \xymatrixrowsep{.3pc} \xymatrixcolsep{.7pc} \xymatrix{ & & 0\to a \to b \to c\to 0 \ar@{|->}[rrddd] \ar@{|->}[llddd] & & & & & 0 \ar@{|->}[rrddd] & &  \\
 & & S_2 A \ar[rdd] \ar[ldd] & & & & & S_0 A \ar[ldd] \ar[rdd]& & \\
 & & & & & & & & & \\
 (a,c) & S_1 A^{ 2} & & S_1 A & b & & \ast & & S_1 A & 0\, .}
\end{equation}
Dually, these diagrams can be read from right to left, making $S_1 A$ a coalgebra object in the $\infty$-category of spans which we call the universal Hall coalgebra of $A$.

The operations $\Delta \mu$ and $\overline{\mu}^2 \Delta^2$ are implemented, respectively, by the spans
\begin{equation*}
 \xymatrixrowsep{.1pc} \xymatrixcolsep{2pc} \xymatrix{ & S_+ A \ar[rdd] \ar[ldd] &  &  & & S_\Box A \ar[rdd] \ar[ldd] & \\
 & & & {\rm and} & & & & \\
 S_1 A^{2} & & S_1 A^{2} & & S_1 A^{2} & & S_1 A^{2} \, ,}
\end{equation*}
where $S_+ A$ and $S_\Box A$  are, respectively, the moduli spaces of diagrams in $A$ of the form
\begin{equation*}
 \xymatrixrowsep{.5pc} \xymatrixcolsep{1.1pc} \xymatrix{ & b \ar[d] & & & a \ar[r] \ar[d] & b \ar[r] & c \ar[d]  \\
 a' \ar[r] & b' \ar[d] \ar[r] & c' & \mathrm{and} & a'\ar[d] & & c' \ar[d] \\
 & b'' & & & a'' \ar[r] & b'' \ar[r] & c'' }
\end{equation*}
having each row and column a short exact sequence. To investigate the compatibility of $\mu$ and $\Delta$, consider the diagram
\begin{equation}
\label{laxbialginf}
 \xymatrixrowsep{.7pc} \xymatrixcolsep{1.5pc} \xymatrix{ & S_+ A \ar[rd] \ar[ld] & \\
 S_1 A^2 & S_{2,2} A \ar[r] \ar[d] \ar[u] \ar[l] & S_1 A^2\, , \\
 & S_\Box A \ar[ru] \ar[lu] & }
\end{equation}
where $S_{2,2} A$ is the moduli spaces of diagrams in $A$ of the form
\begin{equation*}
 \xymatrixrowsep{.5pc} \xymatrixcolsep{1.3pc} \xymatrix{ a \ar[r] \ar[d] & b \ar[d] \ar[r] & c \ar[d] \\
  a'\ar[d] \ar[r] &b' \ar[r] \ar[d] & c' \ar[d] \\
 a'' \ar[r] & b'' \ar[r] & c'' }
\end{equation*}
having each row and column a short exact sequence. The morphism $S_{2,2} A \to S_+ A$ is an equivalence since every cross of short exact sequences can be completed to a $3$x$3$ grid of short exact sequences in an essentially unique way using pullbacks, pushouts, kernels and cokernels. Unfortunately, the morphism $S_{2,2} A \to S_\Box A$ is an equivalence if and only if $A$ is the trivial abelian category (\cite{D15} 2.38). 

We are therefore forced to conclude that $S_1 A$ {\em is not}, in general, a bialgebra. Nonetheless, the diagram in Eq.~\ref{laxbialginf} {\em does} define a non-invertible $2$-morphism
\begin{equation*}
 \Delta \mu \Rightarrow \overline{\mu}^2 \Delta^2
\end{equation*}
in the $\inftwo$-category of bispans. In other words, the $2$-morphism $\Delta \mu \Rightarrow \overline{\mu}^2 \Delta^2$ is a witness for the {\em lax compatibility} of the universal Hall algebra and coalgebra of $A$. These structures taken together define a {\em lax bialgebra object} in the $\inftwo$-category of bispans which we call the universal Hall bialgebra of the abelian category $A$.

In defining the $2$-morphism witnessing the lax compatibility in Eq.~\ref{laxbialginf} we made use of the space $S_{2,2} A$ which cannot be directly built from the $2$-Segal space $S_\bullet A$. Indeed, the additional structure we needed was that Waldhausen's $S$-construction can be iterated to define a bisimplicial space $S_{\bullet,\bullet} A$ where $S_{n,k} A$ is the moduli space of length $n$ flags of length $k$ flags in $A$. The bisimplicial space $S_{\bullet,\bullet} A$ is a particular example of a {\em double $2$-Segal space}: a bisimplicial space $\cX_{\bullet,\bullet}: (\Simpop)^2 \to \cS$ such that $\cX_{\bullet,k}$ and $\cX_{k,\bullet}$ are $2$-Segal spaces for each $k$.

A wealth of examples of double $2$-Segal spaces arise from the $S$-construction of so-called {\em augmented proto-exact $\infty$-categories}: generalisations of exact $\infty$-categories \cite{barwick2015exact} which are not necessarily additive or even pointed. There are two variations on the $S$-construction which define double $2$-Segal spaces. The first is the {\em iterated $S$-construction} $S_{\bullet,\bullet} \cA$ of an augmented proto-exact $\infty$-category which was described above for abelian categories. The second is the {\em monoidal $S$-construction} $S^\otimes_{\bullet,\bullet} \cA$ of a monoidal augmented proto-exact $\infty$-category.
\begin{thmintro}
\label{Thm:MainSConst}
 Both the iterated $S$-construction of an augmented proto-exact $\infty$-category and the monoidal $S$-construction of a monoidal augmented proto-exact $\infty$-category are double $2$-Segal spaces. 
\end{thmintro}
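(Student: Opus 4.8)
The plan is to reduce the statement to the single-variable result—recalled above for abelian categories, and which I would establish in general by adapting \cite{DK12}—that the Waldhausen $S$-construction $S_\bullet \cC$ of an augmented proto-exact $\infty$-category $\cC$ is a $2$-Segal space. By the definition recalled just before the theorem, a bisimplicial space $\cX_{\bullet,\bullet}$ is a double $2$-Segal space precisely when each $\cX_{\bullet,k}$ and each $\cX_{k,\bullet}$ is $2$-Segal; so in both cases it suffices to identify every such row and column with the $S$-construction of a suitable augmented proto-exact $\infty$-category, or else with a simplicial space that is manifestly $2$-Segal.

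The technical core is a stability lemma that I would prove first: for each $k$, the $\infty$-category $S_k \cA$ of length-$k$ flags in $\cA$ again carries a natural augmented proto-exact structure, whose inflations and deflations are the flagwise admissible monomorphisms and epimorphisms. This is verified by checking the proto-exact axioms levelwise, using that in $\cA$ the admissible monos and epis are each stable under the pushouts and pullbacks that occur and that the augmentation restricts to flags; it is the $\infty$-categorical, augmented analogue of the classical fact that $S_k$ preserves exactness. This lemma is where the main difficulty lies: once it is in place the remaining steps are essentially formal.

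For the iterated $S$-construction one unwinds $S_{n,k}\cA \simeq S_n(S_k\cA)$. Fixing $k$, the stability lemma makes $S_k\cA$ augmented proto-exact, so each $S_{\bullet,k}\cA \simeq S_\bullet(S_k\cA)$ is $2$-Segal by the single-variable result. For the columns I would invoke the transpose symmetry of the bi-iterated construction: a length-$n$ flag of length-$k$ flags is the same datum as an $n\times k$ array of objects whose rows and columns are all short exact sequences, hence the same as a length-$k$ flag of length-$n$ flags, yielding a natural equivalence $S_{n,k}\cA \simeq S_{k,n}\cA$. Combined with the previous step, each column $S_{n,\bullet}\cA \simeq S_{\bullet,n}\cA \simeq S_\bullet(S_n\cA)$ is then $2$-Segal, settling the iterated case.

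For the monoidal $S$-construction the two directions are handled asymmetrically. Fixing the monoidal degree $n$, the simplicial space $S^\otimes_{n,\bullet}\cA \simeq (S_\bullet \cA)^{\times n}$ is a finite product of copies of the $2$-Segal space $S_\bullet\cA$; since the defining $2$-Segal conditions are homotopy-pullback conditions computed factorwise, finite products of $2$-Segal spaces are again $2$-Segal, so this column is $2$-Segal. In the other direction, biexactness of the monoidal product guarantees that $\otimes$ descends to each flag category $S_k\cA$, and fixing the flag degree $k$ the simplicial space $S^\otimes_{\bullet,k}\cA$ is then the monoidal nerve (bar construction) of the monoidal $\infty$-category $S_k\cA$; because the unit renders its $0$-th space contractible and the Segal maps $B_n \to B_1^{\times n}$ are equivalences, this bar construction is $1$-Segal, and a fortiori $2$-Segal. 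The only non-formal input here is once more a stability statement—that $\otimes$ is compatible with the proto-exact structure and descends to flags—which I expect to be the most delicate verification of the monoidal case.
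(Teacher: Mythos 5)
Your proposal is correct in substance and rests on the same two pillars as the paper: the single-variable result that $S_\bullet$ of an augmented proto-exact $\infty$-category is $2$-Segal (the paper's Proposition \ref{Prop:Wald2Seg}, proved via the path space criterion), and a stability statement ensuring the flag categories $S_k\cA = \cFun^\mathrm{ex}(\cArr{k},\cA)$ are again augmented proto-exact. Where you diverge is in how the second simplicial direction is handled. The paper proves two general categorical facts --- $\capExact$ is complete with limits computed in $\Catoo^\Box$ (Lemma \ref{Lem:finlim}) and cartesian closed (Lemma \ref{Lem:closed}) --- from which it follows that $S = \cFun^\mathrm{ex}(\cArr{\bullet},-)$ preserves limits; hence applying $S$ object-wise to \emph{any} $2$-Segal object of $\capExact$ immediately yields a double $2$-Segal object (in the sense of Definition \ref{Defn:double2Seg}), and both examples drop out: take $S_\bullet\cA$ for the iterated case and $\cA^\otimes_\bullet$ (Segal, hence $2$-Segal) for the monoidal case. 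You instead verify the outer direction by hand: transpose symmetry $S_{n,k}\cA \simeq S_{k,n}\cA$ for the iterated construction, and a bar-construction/Segal argument for the monoidal one. Both of your arguments are valid, but note that each secretly uses the same input as the paper's uniform argument --- the identification $S_n(S_k\cA)\simeq\cFun^\mathrm{ex}(\cArr{n}\times\cArr{k},\cA)$ needs the exponential law of Lemma \ref{Lem:closed}, and the claim that $[m]\mapsto S_k(\cA^\otimes_m)$ is the bar construction of $S_k\cA$ needs $S_k$ to preserve finite products --- so the paper's route buys a single lemma covering both cases, while yours requires two separate (if more concrete) verifications. One small point you gloss over: the theorem asserts the constructions are double $2$-Segal \emph{spaces}, which requires additionally that passing from the $\infty$-categories $S_{n,k}\cA$ to their maximal subgroupoids preserves the relevant pullbacks; the paper extracts this from Lemma \ref{Lem:finlim} in the corollary following Proposition \ref{Prop:Wald2Seg}.
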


The universal Hall bialgebra of an abelian category outlined above is a special case of the main result of this paper:
\begin{thmintro}
 \label{Thm:MainSumm}
 The space of $(1,1)$-simplices $\cX_{1,1}$ of a double $2$-Segal space $\cX_{\bullet,\bullet}$ is canonically a lax bialgebra in the $\inftwo$-category of bispans. We call this the universal Hall bialgebra of $\cX$. 
\end{thmintro}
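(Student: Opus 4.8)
The plan is to produce the lax bialgebra on $\cX_{1,1}$ from four pieces: a multiplication, a comultiplication, a single lax--compatibility $2$--morphism, and the coherences relating them. The multiplication and comultiplication require no new ideas. The slice $\cX_{\bullet,1}$ is a $2$--Segal space, so the universal Hall algebra construction of Dyckerhoff--Kapranov \cite{DK12} equips $\cX_{1,1}$ with an associative product $\mu$ and unit $\eta$ in $\cSpan{\cS}$, while the slice $\cX_{1,\bullet}$ is a $2$--Segal space, so the dual construction equips $\cX_{1,1}$ with a coassociative coproduct $\Delta$ and counit $\epsilon$. Associativity, coassociativity and the unit and counit axioms are inherited directly from this one--directional theory, applied to the two families of slices. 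The only reason to pass to the $\inftwo$--category of bispans $\ctSpan{\cS}$ is that these two structures are not compatible on the nose.

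Explicitly, writing $d_i^h,d_i^v$ for the horizontal and vertical faces, $\mu$ is the span $\cX_{1,1}^2\xleftarrow{(d_2^h,d_0^h)}\cX_{2,1}\xrightarrow{d_1^h}\cX_{1,1}$ and $\Delta$ is the span $\cX_{1,1}\xleftarrow{d_1^v}\cX_{1,2}\xrightarrow{(d_2^v,d_0^v)}\cX_{1,1}^2$, and it is the bisimplicial identities $d_i^hd_j^v=d_j^vd_i^h$ that make them interact. To build the lax--compatibility $2$--morphism I would compute both sides of the bialgebra law as composites in $\ctSpan{\cS}$. Since composition is pullback, $\Delta\mu$ is presented by $P:=\cX_{2,1}\times_{\cX_{1,1}}\cX_{1,2}$ --- glueing the output $d_1^h$ of $\mu$ to the input $d_1^v$ of $\Delta$ --- while $\overline{\mu}^2\Delta^2$ is presented by $Q:=\cX_{1,2}^2\times_{\cX_{1,1}^4}\cX_{2,1}^2$, formed after the middle swap. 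The space $\cX_{2,2}$ of $(2,2)$--bisimplices maps to $P$ by its two central faces $(d_1^v,d_1^h)$ and to $Q$ by its four outer faces $(d_2^v,d_0^v,d_2^h,d_0^h)$, and the resulting correspondence $P\leftarrow\cX_{2,2}\to Q$ is exactly the $2$--morphism $\Delta\mu\Rightarrow\overline{\mu}^2\Delta^2$ of Equation~\eqref{laxbialginf}. In the abelian case the central leg is the equivalence $S_{2,2}A\xrightarrow{\sim}S_+A\simeq P$, so this refines to an honest directed $2$--cell, and its noninvertibility is the failure of the outer leg $S_{2,2}A\to S_\Box A\simeq Q$ to be an equivalence identified in \cite{D15}.

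The main obstacle is coherence. One must check that this single $2$--cell is compatible with $\mu$, $\Delta$, $\eta$ and $\epsilon$ --- for instance that the two ways of comparing $\Delta\mu\mu$, or $\Delta\Delta\mu$, with their fully expanded forms agree --- and that the comparison $2$--cells themselves satisfy the coherences demanded by the definition of a lax bialgebra in an $\inftwo$--category. Carrying this out cell by cell would require an unbounded family of comparison maps between pullbacks of the spaces $\cX_{m,n}$, so instead I would assemble the entire structure as a single $\inftwo$--functor from an indexing $\inftwo$--category --- one whose generating $1$-- and $2$--cells name the lax--bialgebra operations --- into $\ctSpan{\cS}$, obtained by feeding the bisimplicial space $\cX_{\bullet,\bullet}$ into a two--directional span construction. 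Coherence then becomes functoriality, and the two families of $2$--Segal conditions enter exactly where needed: they force the generating cells to be sent to the required pullbacks and degeneracies, encoding the associativity of $\mu$, the coassociativity of $\Delta$, and the unit and counit laws. The crux is the construction of this functor, that is, encoding coherently the noncommuting interchange of the two span directions; the base--change failure recorded by $\cX_{2,2}\to Q$ is precisely what prevents this interchange from being invertible and so forces the target to be the $\inftwo$--category of bispans rather than an ordinary $\infty$--category of spans.
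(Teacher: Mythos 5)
There is a genuine gap. Your identification of $\mu$, $\Delta$, and the single compatibility $2$-cell $P \leftarrow \cX_{2,2} \to Q$ is correct and matches the paper's informal discussion around Equation~\eqref{laxbialginf}, and your overall strategy --- reduce coherence to functoriality out of a corepresenting object, feed $\cX_{\bullet,\bullet}$ through a two-directional span construction, and let the two families of $2$-Segal conditions supply (co)associativity --- is exactly the paper's. But the step you defer as ``the crux'' \emph{is} the theorem: everything in Sections~\ref{Sec:catbackground} and \ref{Sec:totlax} exists to carry it out. Concretely, the paper (i) defines the corepresenting symmetric monoidal $\infty$-category $\csmBialg$ from the category $\Alg$ of finite sets with linearly ordered fibres, where the absence of genuine pullbacks forces the introduction of pseudo-pullbacks and the non-commutative shapes $\Pyrnc{n}$ (Definition~\ref{BialgDefn}); (ii) constructs an explicit symmetric monoidal \emph{lax} functor $\beta: \csmBialg \rightsquigarrow \csmtSpan{\ctSetfop}$ on the initial bisimplicial object, simplex by simplex of the unstraightening, via the master diagram $\overline{\beta}(\theta,\gamma)$ and the cone functor $\kappa_\phi$ (Proposition~\ref{TotLaxComb}); and (iii) transports this along the finite-limit-preserving right Kan extension $\ctSetfop \to \cC$ to get $\beta_\cX$ (Theorem~\ref{TotLaxMain}). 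None of this is routine, and your proposal contains no substitute for it; producing ``an indexing $\inftwo$-category whose generating $1$- and $2$-cells name the lax-bialgebra operations'' together with all of its higher coherences is at least as hard as building the lax functor directly.

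Two further points. First, in the paper the laxness lives in the \emph{functor} (it preserves only cocartesian lifts of inert morphisms), not in the indexing object: $\csmBialg$ has invertible $2$-cells in its fibres, and the non-invertible comparison cells appear as the lax structure maps valued in the $2$-morphisms of $\csmtSpan{\cC}$. Your framing suggests that passing to an $\inftwo$-categorical target alone accounts for the laxness, which conflates the need for non-invertible $2$-morphisms in the target with the need for the functor itself to be lax. Second, to conclude that the double $2$-Segal condition upgrades ``totally lax'' to ``lax'' (honest associativity and coassociativity), one must verify that the restrictions of $\beta_\cX$ along $\iota_a$ and $\iota_c$ agree with $\alpha_{\cX_{\bullet,1}}$ and $\chi_{\cX_{1,\bullet}}$; this is Lemma~\ref{BetaAlphaComp}, a nontrivial compatibility check between the cone functor and the projection $\Omega_\phi \to M_\phi$, and your appeal to the slices being $2$-Segal implicitly assumes it.
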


Our construction of the universal Hall bialgebra leverages the fact that the initial bisimplicial object in an $\infty$-category having finite limits is
\begin{equation*}
 \Simplex{\bullet,\bullet}: (\Simpop)^2 \to \ctSetfop,
\end{equation*}
where $\ctSetf$ is the nerve of the category of level-wise finite bisimplicial sets: any bisimplicial object $\cX_{\bullet,\bullet} \in \cC_{\Simp^2}$ in an $\infty$-category $\cC$ having finite limits defines a finite limit preserving functor $\ctSetfop \to \cC$ by right Kan extension
\begin{equation*}
 \xymatrixrowsep{1.1pc} \xymatrix{ \ctSetfop \ar[r] & \cC \\
 (\Simpop)^2 \ar@{^{(}->}[u] \ar[ru]_-\cX & }
\end{equation*}
which sends $\Simplex{\bullet,\bullet}$ to $\cX_{\bullet,\bullet}$. The structures underlying the universal Hall bialgebra of Theorem \ref{Thm:MainSumm} are directly inherited from the following:
\begin{thmintro}
 \label{Thm:Maincomb}
 The standard $(1,1)$-simplex $\Simplex{1,1}$ is a totally lax bialgebra in the $\inftwo$-category of bispans in $\ctSetfop$. That is, it carries a laxly associative product and lax coassociative coproduct which are laxly compatible.
\end{thmintro}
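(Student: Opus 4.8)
The plan is to build every piece of structure by hand from the bisimplicial identities, exploiting a single mechanism: composition of $1$-morphisms in the $\inftwo$-category $\ctSpan{\ctSetfop}$ of bispans is computed by pullback in $\ctSetfop$, i.e.\ by \emph{pushout} in $\ctSetf$, and the gluings occurring in all of our composites are along monomorphisms, so that each composite span has as apex an honest union of standard bisimplices. The relevant monoidal structure is the Cartesian one on $\ctSetfop$: its unit is the terminal object $\emptyset$ (the empty bisimplicial set) and its tensor is the product in $\ctSetfop$, which is the coproduct $\sqcup$ of $\ctSetf$. This is precisely what the finite-limit-preserving right Kan extension $\ctSetfop \to \cC$ of a double $2$-Segal space $\cX$ carries to $\cX_{1,1}\times\cX_{1,1}$, so the combinatorial structure produced below will descend to the lax bialgebra of Theorem~\ref{Thm:MainSumm}.

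First I would write down the product, coproduct, unit and counit as the spans in $\ctSetfop$
\[
\Simplex{1,1}\times\Simplex{1,1}\;\longleftarrow\;\Simplex{2,1}\;\longrightarrow\;\Simplex{1,1}, \qquad \Simplex{1,1}\;\longleftarrow\;\Simplex{1,2}\;\longrightarrow\;\Simplex{1,1}\times\Simplex{1,1},
\]
\[
\emptyset\;\longleftarrow\;\Simplex{0,1}\;\longrightarrow\;\Simplex{1,1}, \qquad \Simplex{1,1}\;\longleftarrow\;\Simplex{1,0}\;\longrightarrow\;\emptyset,
\]
giving $\mu,\Delta,\eta,\epsilon$ respectively, whose non-trivial legs are dual to the pair of outer faces, the long edge, and the degeneracy, taken in the first simplicial variable for $\mu$ and $\eta$ and in the second for $\Delta$ and $\epsilon$. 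These are exactly the combinatorial templates carried to the Hall product and coproduct of Eq.~\ref{IntroHallprod}.

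Next I would establish lax associativity, lax coassociativity and the (co)unit laws one simplicial variable at a time. The crucial point -- and the source of the word \emph{totally lax} -- is that the simplicial object $\Simplex{\bullet,1}$ of $\ctSetfop$ is \emph{not} $2$-Segal: gluing two copies of $\Simplex{2,1}$ along a shared $\Simplex{1,1}$ yields only a proper subcomplex of $\Simplex{3,1}$. Consequently the two bracketings $\mu(\mu\times\id)$ and $\mu(\id\times\mu)$ have as apexes the two distinct such subcomplexes of $\Simplex{3,1}$ (the unions of the triangles $\{012\},\{023\}$ and of $\{123\},\{013\}$ in the first variable), and the inclusions of both into $\Simplex{3,1}$ exhibit $\Simplex{3,1}$ as a non-invertible associator between them. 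The higher associativity coherences arise the same way from $\Simplex{n,1}$, the unit laws from $\Simplex{2,1}$ with a degenerate edge, and the entire discussion dualises to the second variable $\Simplex{1,\bullet}$ to give the lax coalgebra structure. This step is the one-variable statement that the standard simplex is the universal lax (co)monoid in spans.

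Finally -- and this is the substantive part -- I would produce the lax compatibility $2$-morphism $\Delta\mu \Rightarrow \overline{\mu}^2\Delta^2$ from the mixed bisimplex $\Simplex{2,2}$, directly mirroring the role of $S_{2,2}A$ in Eq.~\ref{laxbialginf}: the source $\Delta\mu$ has apex $\Simplex{2,1}\sqcup_{\Simplex{1,1}}\Simplex{1,2}$ and the target $\overline{\mu}^2\Delta^2$ the larger four-corner pushout, both of which embed as subcomplexes of $\Simplex{2,2}$, so that $\Simplex{2,2}$ is the desired witness; its non-invertibility reflects exactly that the second inclusion is proper. The main obstacle is the remaining coherence bookkeeping needed to promote these cells to a genuine \emph{totally lax bialgebra}: one must check the associativity pentagon and its dual, the unit and counit triangles, and the compatibility coherences relating $\mu$, $\Delta$ and the bialgebra $2$-cell. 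In the present model each such axiom becomes an equality of two pasting composites, each computed as a standard bisimplex mediating between subcomplexes of some $\Simplex{n,m}$; I expect every one to reduce, after expanding the pushouts, to the bisimplicial identities together with the uniqueness of the universal maps out of the relevant subcomplexes, so that the only input is the functoriality of $\Simplex{\bullet,\bullet}\colon \Simp^2 \to \ctSetf$. Organising this reduction uniformly -- rather than verifying each coherence by hand -- is where the real work lies.
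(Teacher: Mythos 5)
Your generating cells are the right ones, and they agree with the paper's: the product and unit are the images under $(-)^h$ of the spans $\alpha(m)$ and $\alpha(\underline{0}\to\underline{1})$ from the one-variable construction, the coproduct and counit are their duals under $(-)^v$, the non-invertible associativity witnesses are the inclusions of the two-triangle subcomplexes into $\Simplex{3,1}$ (and your observation that $\Simplex{\bullet,1}$ fails the $2$-Segal condition in $\ctSetfop$ is exactly why the structure is only lax), and the compatibility witness is $\Simplex{2,2}$ receiving the ``cross'' $\Simplex{2,1}\sqcup_{\Simplex{1,1}}\Simplex{1,2}$ and the four-corner ``box''. But this identifies the $0$-, $1$- and $2$-dimensional cells of the structure and then defers precisely the content of the theorem. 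The statement being proved is not that a finite list of axioms (pentagon, triangles, compatibility hexagons) holds: by Definitions \ref{DefnsmLax} and \ref{DefnLaxBialg}, a totally lax bialgebra structure on $\Simplex{1,1}$ is a morphism of fibrations $\Un(\csmBialg)\to\Un\bigl(\csmtSpan{\ctSetfop}\bigr)$ over $\cFinDel$ preserving cocartesian lifts of $\cFinpt\times\Simpin^{\op}$. One must therefore exhibit, for \emph{every} $k$-simplex $((f,\phi),\theta,\gamma)$ of $\Un(\csmBialg)$ --- every string of composable bispans of finite sets with pseudo-pullback data, in every tensor multiplicity --- a cartesian, vertically constant functor $\Cartop{f(0)}\times\Pyr{M_\phi}\to\tSetfop$, and these must satisfy the simplicial identities \emph{on the nose}. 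There is no finite coherence checklist to reduce to the bisimplicial identities; the ``uniform organisation'' you flag as the remaining work is the entire proof.

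The paper supplies exactly that mechanism, and nothing in your proposal substitutes for it: for each $((f,\phi),\theta,\gamma)$ it builds a single \emph{master diagram} $\overline{\beta}(\theta,\gamma)\colon\Singop{f(0)}\times\Pyr{\Omega_\phi}\to\tSetfop$ (a normal oplax functor into spans, assembled from the pseudo-pullback squares of $\theta$ via the bi-$\nabla$ sets $\Simplexn{n,m}$), together with a \emph{cone functor} $\kappa_\phi\colon\Pyr{M_\phi}\to\widehat{\Pyr{\Omega_\phi}}$ into the free completion which records, for each interval of $M_\phi$, the subposet of $\Omega_\phi$ over which a limit must be extracted; the simplex $\beta(\theta,\gamma)$ is then produced by right Kan extension, and strict compatibility with the simplicial operators follows from the naturality of $\Omega_{(-)}$ and $\kappa_{(-)}$ (Lemma \ref{ConeNat}) rather than from a cell-by-cell check. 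Two further points your plan omits entirely: (i) the symmetric monoidal condition is not automatic from writing $\otimes=\sqcup$ --- it is the requirement that $\beta$ preserve cocartesian lifts of inert morphisms, verified in Proposition \ref{TotLaxComb} by decomposing $M_\phi$ for an inert $\phi$ and reducing to the computation of Example \ref{MasterOne}; and (ii) the compatibility $2$-cells for arbitrary composites are not single bisimplices but colimits $\fB_{q,p}=\coprod_{x}\Simplex{|p^{-1}(x)|,|q^{-1}(x)|}$ indexed by pseudo-pullbacks in $\Alg$, whose coherent choice across all composites is what the corepresenting object $\csmBialg$ and the master diagram are for. So the proposal is a correct description of what the structure looks like in low degrees, but it is not a proof of the theorem.
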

One of the key benefits of this approach is that it allows us to define the universal Hall bialgebra of a double $2$-Segal object in an arbitrary $\infty$-category having finite limits, such as $\infty$-categories of spaces or (derived) stacks.

This paper is the second in a series of three papers whose aim it is to define new examples of bimonoidal categories using $2$-Segal spaces. The first paper \cite{penney2017simplicial} gave a combinatorial construction of the universal Hall algebra of a $2$-Segal object, in the process laying much of the technical groundwork for this paper. The third paper \cite{penney2017bimon} defines the Hall bimonoidal category of a double $2$-Segal space as a linearisation via local systems of the universal Hall bialgebra defined in this paper. 

\paragraph{Outline.} We begin in Section \ref{Sec:catbackground} with the technical background necessary to define the notion of lax bialgebra objects in $\csmtSpan{\cC}$, the symmetric monoidal $\inftwo$-category of bispans in $\cC$. In short, they are defined as certain symmetric monoidal lax functors $\csmBialg \rightsquigarrow \csmtSpan{\cC}$, where $\csmBialg$ is the $\infty$-category which corepresents bialgebras. In Section \ref{Sec:laxfun} we review symmetric monoidal $\inftwo$-categories and symmetric monoidal lax functors between them. Then in Section \ref{Sec:bispans} we introduce the twisted arrow construction which appears throughout this paper and review results from \cite{penney2017simplicial} which present $\csmtSpan{\cC}$ in a convenient form for our purposes. Section \ref{Sec:catbackground} ends with the definition of $\csmBialg$ and the definition of lax bialgebra objects.

As discussed above the protagonists in this paper are double $2$-Segal spaces, the subjects of Section \ref{Sec:double2Seg}. Given the definition of a $2$-Segal space, which we review in Section \ref{Sec:Rev2Seg}, the definition of a double $2$-Segal space is straightforward. Moreover, one needs to develop essentially no theoretical machinery concerning double $2$-Segal spaces to define their universal Hall bialgebras. Instead, the purpose of this Section is to construct examples of double $2$-Segal spaces using the $S$-construction of augmented proto-exact $\infty$-categories as outlined in Theorem \ref{Thm:MainSConst}. This is done in Section \ref{Sec:augexact}. 

Section \ref{Sec:totlax} contains the main construction of the paper: we show that every bisimplicial object $\cX \in \cC_{\Simp^2}$ equips its space of $(1,1)$-simplices $\cX_{1,1}$ with the structure of a totally lax bialgebra in $\csmtSpan{\cC}$. This is, as we explain in Section \ref{Sec:intospacesBi}, an essentially formal consequence of Theorem \ref{Thm:Maincomb}. The latter construction, namely the endowing of $\Simplex{1,1}$ with a totally lax bialgebra structure as an object of $\csmtSpan{\ctSetfop}$, occupies Section \ref{Sec:bialgcomb}. 

Finally, we conclude the paper in Section \ref{Sec:SegbiAssoc} with the definition of the universal Hall bialgebra of a double $2$-Segal object and the proof that it is a lax bialgebra. 

\paragraph{Acknowledgements.} The majority of this paper was part of the author's DPhil thesis under the supervision of Christopher L. Douglas. We are indebted to Joachim Kock for his thorough feedback as an external thesis examiner. Finally, we would also like to thank Tobias Dyckerhoff for a number of insightful conversations concerning $2$-Segal spaces and for pointing out an error in an early version of these results. 

\paragraph{Notational conventions and simplicial preliminaries.} By an $\infty$-category we will always mean a quasi-category. As such, we crucially rely upon the theory of $\infty$-categories developed by \cite{JoyalJPAA, joyal2008theory} and Lurie \cite{HTT,LurieHA}. 

To distinguish ordinary from $\infty$- categories, we use Greek letters (e.g. $\Delta$) or ordinary font (e.g. $C$) to denote the former and blackboard Greek letter (e.g. $\Simp$) or calligraphic font (e.g. $\cC$) to denote the latter.

The category of functors between between ordinary categories is denoted $\Fun(-,-)$ while for $\infty$-categories it is $\cFun(-,-)$. Similarly, $\Map(-,-)$ is the groupoid of functors and natural isomorphisms and $\cMap(-,-)$ is the largest Kan complex inside $\cFun(-,-)$. 

The $\infty$-category of $\infty$-categories, $\Catoo$, is defined to be the simplicial nerve of $\qCat$, which has objects quasi-categories and mapping spaces given by $\cMap(-,-)$ (\cite{HTT} 3.0.0.1). The $\infty$-category of spaces, $\cS$, is the full subcategory of $\Catoo$ on those quasi-categories which are Kan complexes. The inclusion $\xymatrixcolsep{.8pc}\xymatrix{\cS \ar@{^{(}->}[r] &  \Catoo}$ admits a right adjoint $(-)^\simeq$ (\cite{HTT} 1.2.5.3) and a left adjoint $(-)^\mathrm{gpd}$ (\cite{HTT} 1.2.5.6).

The category $\Fin$ is the category of finite sets with every object isomorphic to one of the form
\begin{equation*}
 \underline{n}=\{1,\cdots, n\} \in \Fin.
 \end{equation*}
Similarly, $\Finpt$ is the category of finite pointed sets, the objects of which are denoted $X_\ast$ where $\ast$ is the basepoint and $X$ the complement. The $\infty$-categories $\cFin$ and $\cFinpt$ are, respectively, the nerves of $\Fin$ and $\Finpt$.

The objects of the category $\Delta$ of non-empty finite linear orders are
\begin{equation*}
 [n] = \{0<1<\ldots<n \} \in \Delta.
\end{equation*}
There are two distinguished classes of morphisms in $\Delta$: the {\em active} morphisms, denoted $\actmorR$, which preserve endpoints and the {\em inert} morphisms, denoted $\inmorR$, which are inclusions of subintervals. Every morphism in $\Delta$ can be uniquely factored as an active followed by an inert morphism. We denote the wide subcategories of, respectively, active and inert morphisms by $\Deltaac$ and $\Deltain$. The $\infty$-categories $\Simp$, $\Simpac$ and $\Simpin$ are, respectively, the nerves of $\Delta$, $\Deltaac$ and $\Deltain$.

A {\em $k$-fold simplicial object} in an ordinary category $C$ or $\infty$-category $\cC$ is an object, respectively, of
 \begin{equation*}
  C_{\Delta^k} = \Fun\left(\left(\Delta^\op\right)^{\times k}, C\right) \quad \cC_{\Simp^k} = \cFun\left(\left(\Simpop\right)^{\times k}, \cC\right).
 \end{equation*}

The category of (possibly empty) linear orders $\Delta_+$ has objects labelled by
 \begin{equation*}
  \langle n \rangle = \{1 < \cdots < n\} \in \Delta_+.
 \end{equation*}
From $\Delta_+$ one can build the category $\nabla$ (\cite{Kockres} 8)\footnote{Note that our category $\nabla$ is the opposite of the one defined in \cite{Kockres}}, which has the same objects but morphisms 
\begin{equation}
 \label{nabladefn}
 \xymatrixcolsep{1.1pc} \xymatrixrowsep{.8pc} \xymatrix{ & \ar[ld] \langle k \rangle \ar@{>->}[rd] & \\
 \langle n \rangle &  & \langle m \rangle\, .}
 \end{equation}
 A {\em $k$-fold $\nabla$-object} in an ordinary category $C$ is an object of 
 \begin{equation*}
 C_{\nabla^k} = \Fun \left((\nabla^\op)^{\times k} , C \right).
 \end{equation*}

 Finally, by (\cite{Kockres} 8.2) one has a functor $\fG:\Delta \to \nabla$ which is bijective on objects and full and restricts to isomorphisms $\Deltaac \simeq \Delta_+^\op$ and $\Deltain^{\geq 1} \simeq (\Delta_+)_\mathrm{in}^{\geq 1}$. Furthermore, equipping $\Deltaac$ with the monoidal structure
 \begin{equation*}
 [n] \vee [m] = [n+m]
\end{equation*}
having unit $[0]$ makes the functor $\fG$ a monoidal equivalence, where $\Delta_+$ has the monoidal structure
\begin{equation}
\label{AugMon}
 \langle n \rangle + \langle m \rangle = \langle n+m \rangle
\end{equation}
having unit $\langle 0 \rangle$. Restriction along $\fG$ induces a fully faithful functor
 \begin{equation}
  \label{Augdef}
  \fG^*: C_{\nabla^k} \to C_{\Delta^k}.
 \end{equation}

\section{Lax bialgebras in \texorpdfstring{$\csmtSpan{\cC}$}{the (oo,2)-category of bispans}}
\label{Sec:catbackground}

The universal Hall bialgebra of a double $2$-Segal object $\cX \in \cC_{\Simp^2}$ will be a lax bialgebra object in the symmetric monoidal $\inftwo$-category of bispans in $\cC$, denoted $\csmtSpan{\cC}$. In this section we cover the technical background necessary to define this notion. We begin in Section \ref{Sec:laxfun} with a quick review of symmetric monoidal $\inftwo$-categories and lax functors between them. Then Section \ref{Sec:bispans} covers the symmetric monoidal $\inftwo$-category of bispans. Finally, in Section \ref{Sec:corepbialg} we define lax bialgebra objects.

\subsection{Lax functors between \texorpdfstring{$\inftwo$}{(oo,2)}-categories}
\label{Sec:laxfun}

By an {\em $\inftwo$-category} we mean a simplicial $\infty$-category $\cB \in (\Catoo)_\Simp$ satisfying the Segal conditions,
\begin{equation*}
 \xymatrixcolsep{1.5pc} \xymatrix{ \cB_n \ar[r]^-\sim & \cB_1 \times_{\cB_0} \cdots \times_{\cB_0} \cB_1}, \quad n \geq 2.
\end{equation*}
Moreover, the $\infty$-category $\cB_0$ must be a space, that is, $\cB_0 \in \cS$, and the Segal space
\begin{equation*}
 \xymatrixcolsep{1.5pc} \xymatrix{ \Simpop \ar[r]^-{\cB} & \Catoo \ar[r]^-{(-)^\simeq} & \cS}
\end{equation*}
must be complete \cite{Lurieinftwo}. A functor of $\inftwo$-categories is simply a natural transformation or, equivalently, the $\infty$-category of $\inftwo$-categories, $\Bicatoo$, is a full subcategory of $(\Catoo)_\Simp$.  

A {\em lax functor} $L: \cB \rightsquigarrow \cB'$ between $\inftwo$-categories $\cB, \cB' \in \Bicatoo$ is, informally, a functor in which the $2$-morphisms witnessing the preservation of composition and unitality are not required to be invertible. To formalise this notion, recall that the {\em unstraightening construction} (\cite{HTT} 3.2.0.1) defines an equivalence between functors $\cC \to \Catoo$ and {\em cocartesian fibrations} over $\cC$,
\begin{equation*}
 \Un: \xymatrixcolsep{1.5pc} \xymatrix{ \cFun(\cC,\Catoo) \ar[r]^-\sim & \mathcal{C}\mathrm{ocart}_{/\cC}}.
\end{equation*}

In particular, one can unstraighten an $\inftwo$-category by taking $\cC = \Simpop$. A functor $F:\cB \to \cB'$ unstraightens to a morphism
\begin{equation*}
 \xymatrixrowsep{.8pc}\xymatrixcolsep{.9pc}\xymatrix{\Un(\cB)\ar[rr]^-{\Un(F)} \ar[rd] & & \Un(\cB') \ar[ld] \\
  & \Simpop & }
\end{equation*}
such that $\Un(F)$ preserves cocartesian morphisms. A lax functor $L: \cB \rightsquigarrow \cB'$ is then defined to be a morphism of fibrations
\begin{equation*}
 \xymatrixrowsep{.8pc}\xymatrixcolsep{.9pc}\xymatrix{\Un(\cB)\ar[rr]^-{L} \ar[rd] & & \Un(\cB') \ar[ld] \\
  & \Simpop & }
\end{equation*}
such that $L$ preserves the cocartesian lifts of inert morphisms in $\Simpop$ (\cite{DK12} 9.2.8), that is, cocartesian lifts of morphisms in $\Simpin^\op$.

Using that the $\infty$-category $\Bicatoo$ has finite limits we define, following Lurie (\cite{LurieHA} 2.0.0.7), a {\em symmetric monoidal $\inftwo$-category} to be a functor $\cB^\otimes: \cFinpt \to \Bicatoo$ such that for every $S_\ast \in \cFinpt$,
\begin{equation*}
\xymatrixcolsep{1.5pc} \xymatrix{ \cB^\otimes(S_*) \ar[r]^-\sim & \displaystyle \prod_{s \in S} \cB^\otimes(\{s\}_\ast)}.
\end{equation*}
The $\infty$-category of symmetric monoidal $\inftwo$-categories, $\smBicatoo$, is a full subcategory of the functor category $\cFun(\cFinpt, \Bicatoo)$. 

Finally, one can readily extend the definition of a lax functor to define a {\em symmetric monoidal lax functor}: a functor between $\inftwo$-categories which laxly preserves composition and unitality but still preserves the symmetric monoidal structure.
\begin{defn}
 \label{DefnsmLax}
 A symmetric monoidal lax functor $L: \cB^\otimes \rightsquigarrow (\cB')^{\otimes}$ between symmetric monoidal $\inftwo$-categories $\cB^\otimes, (\cB')^\otimes \in \smBicatoo$ is a morphism of fibrations
 \begin{equation*}
  \xymatrixrowsep{.8pc}\xymatrixcolsep{.9pc}\xymatrix{\Un\left(\cB^\otimes\right)\ar[rr]^-{L} \ar[rd] & & \Un\left((\cB')^\otimes\right) \ar[ld] \\
  & \cFinDel & }
 \end{equation*}
such that $L$ preserves cocartesian lifts of morphisms in $\cFinpt \times \Simpin^\op$.
\end{defn}

\subsection{The twisted arrow construction and the \texorpdfstring{$\inftwo$}{(oo,2)}-category of bispans}
\label{Sec:bispans}

Informally, from an $\infty$-category $\cC$ having finite limits one can construct an $\inftwo$-category having the same objects as $\cC$, $1$-morphisms span diagrams
\begin{equation*}
 \xymatrixrowsep{1.2pc} \xymatrixcolsep{1.5pc} \xymatrix{ & d \ar[rd] \ar[ld] & \\
 c & & c'}
\end{equation*}
$2$-morphisms `spans of spans'
\begin{equation*}
 \xymatrixrowsep{.8pc}\xymatrix{ & d \ar[rd] \ar[ld] & \\
 c & e \ar[u] \ar[d] \ar[r] \ar[l]& c' \\
 & d' \ar[ru] \ar[lu] & }
\end{equation*}
and compositions given by pullback. The cartesian product on $\cC$ endows this $\inftwo$-category with a symmetric monoidal structure. We shall call this the symmetric monoidal $\inftwo$-category of {\em bispans in $\cC$}, and denote it by $\csmtSpan{\cC}$.

Haugseng \cite{RuneSpans} has given a rigorous construction of $\csmtSpan{\cC}$. In a previous work (\cite{penney2017simplicial} 3) we reformulated Haugseng's construction in a form suitable for our purposes. We shall only quickly review what is needed for the current paper and refer the reader to the previous paper for further details.

The morphisms in this $\inftwo$-category are given in terms of the {\em twisted arrow construction}: a functor
\begin{equation*}
 \Pyr{-}: \Cat \to \Cat,
\end{equation*}
where $\Pyr D$ is the category whose objects are arrows $f:d \to d'$ and whose morphisms from $f_1$ to $f_2$ are diagrams
\begin{equation*}
 \xymatrixrowsep{.9pc} \xymatrix{d_1 \ar[r]^-{f_1} \ar[d] & d_1' \\
 d_2 \ar[r]_-{f_2} & d_2' \ar[u] }
\end{equation*}
The $\infty$-categories $\cPyr{D}$ are the nerves of the categories $\Pyr {D}$.

\begin{remark}
\label{Rem:oplax}
A fact which will we use repeatedly in the latter parts of this paper is the following: a functor $\Pyr D \to C$, for $C$ a category having finite limits, is equivalent to a {\em normal oplax functor} $D \nrightarrow \sp C$ (\cite{Errington} 3.4.1). The target $\sp C$ is a bicategory having the same objects as $C$, $1$-morphisms spans and $2$-morphisms diagrams 
\begin{equation*}
 \xymatrixrowsep{.7pc} \xymatrix{ & d \ar[rd] \ar[ld] \ar[dd] & \\
 c & & c' \, .\\
 & d' \ar[ru] \ar[lu] & }
\end{equation*}
\end{remark}

For a poset $X$, the twisted arrow category $\Pyr X$ is the opposite of the poset of non-empty subintervals in $X$. In particular, for any $\phi \in N_k(\Delta^\op)$, where $N_\bullet(-)$ is the nerve, one has a poset $M_\phi$ with objects
\begin{equation*}
 \{ (a,b) \ | \ b \in[k]^{\rm op}, a \in \phi(b)\},
\end{equation*}
and ordering defined by declaring $(a,b) \leq (a',b')$ if and only if $b \leq b' \in [k]^{\rm op}$ and $\phi_{b,b'}(a) \leq a'$. 
\begin{exam}
For $\phi \in N_k(\Delta^{\rm op})$ the constant map on $[n]$, the poset $M_\phi$ is isomorphic to $[n]\times[k]$.
\end{exam}
\begin{exam}
 For the unique active morphism $\phi = ([2] \actmorL [1]) \in N_1(\Delta^{\rm op})$, the poset $M_\phi$ is
\begin{equation*}
 \xymatrixrowsep{.8pc}\xymatrixcolsep{1.1pc} \xymatrix{(0,1) \ar[d] \ar[rr] & & (1,1) \ar[d] \\
 (0,0) \ar[r] & (1,0) \ar[r] & (2,0)}
\end{equation*}
\end{exam}

Since the posets $M_\phi$ are obtained from the Grothendieck construction of the functor
\begin{equation*}
 \xymatrix{ [k]^{\rm op} \ar[r]^-\phi & \Delta \ar@{^{(}->}[r] & \Cat},
\end{equation*}
they satisfy the following naturality properties: each natural transformation $\eta: \phi' \Rightarrow \phi$ gives a functor 
\begin{equation*}
M(\eta): M_{\phi'} \to M_{\phi}, \ (a,b) \mapsto (\eta_b(a), b)
\end{equation*}
and each morphism $\gamma: [n] \to [k]$ gives a functor 
\begin{equation*}
M(\gamma): M_{\phi \gamma} \to M_{\phi}, \ (a,b) \mapsto (a, \gamma(b)).
\end{equation*}

There are two relevant subcategories of $\Pyr{M_\phi}$. The first is the subcategory $\Wedge{M_\phi}$ consisting of those intervals 
\begin{equation*}
 \Wedge{M_\phi} = \left\{ [(a,b);(\phi_{b,b'}(a'),b')] \ | \ |b'-b| \leq 1 \, \mathrm{and} \, |a'-a| \leq 1 \right\}.
\end{equation*}
 The second is the subcategory $\Pyr{V_\phi}$, where $V_\phi$ is the subcategory of $M_\phi$ on those intervals
 \begin{equation*}
  V_\phi = \left\{ [(a,b);(\phi_{b,b'}(a),b')]\right\}.
 \end{equation*} 
\begin{exam}
 For $\phi \in N_k(\Delta^\op)$ the constant map on $[n]$, the subcategory $\Wedge{M_\phi}$ is isomorphic to $\Wedge{n,k} := \Wedge{n} \times \Wedge{k}$, where $\Wedge {n}$ is the subcategory of $\Pyr n$ on those intervals $[i;j]$ with $|j-i|\leq 1$. The category $V_\phi$ is isomorphic to $V_{n,k}$, the subcategory of $[n]\times[k]$ on the morphisms $(\id, g)$.
\end{exam}
\begin{exam}
  \label{Vphiact}
  For the unique active morphism $\phi = ([2] \actmorL [1]) \in N_1(\Delta^{\rm op})$, the subcategory $\Wedge {M_\phi}$ is
\begin{equation*}
 \xymatrixcolsep{.75pc}\xymatrixrowsep{.8pc} \xymatrix{[(0,1);(0,1)] & & \ar[ll] [(0,1);(1,1)] \ar[rr] & & [(1,1);(1,1)] \\
 [(0,1);(0,0)] \ar[u] \ar[d] & & [(0,1);(2,0)] \ar[ll] \ar[u] \ar[rr] \ar[rd] \ar[ld] & & [(1,1);(2,0)] \ar[u] \ar[d] \\
 [(0,0);(0,0)] & \ar[l] [(0,0);(1,0)] \ar[r] & [(1,0);(1,0)] & \ar[l] [(1,0);(2,0)] \ar[r] & [(2,0);(2,0)] }
\end{equation*}
  and the poset $V_\phi$ is
\begin{equation*}
 \xymatrixrowsep{.8pc}\xymatrixcolsep{1.1pc} \xymatrix{(0,1) \ar[d]  & & (1,1) \ar[d] \\
 (0,0)  & (1,0)  & (2,0)}
\end{equation*}
\end{exam}

Finally, the categories $\Cart S$, the poset of subsets of a set $S$, assemble into a functor $\Cart {(-) }: \Finpt^{\rm op} \to \Cat$ by declaring the image of a pointed map $f:S_* \to T_*$ to be
 \begin{equation*}
  \Cart f: \Cart T \to \Cart S, \quad U \mapsto f^{-1}(U).
 \end{equation*}
Each category $\Cart S$ has a full subcategory $\Sing S$ consisting of the singleton sets. The $\infty$-categories $\cSing S$ and $\cCart S$ are, respectively, the nerves of $\Sing S$ and $\Cart S$.

\begin{defn}
 Let $\cC$ be an  $\infty$-category with finite limits, $S$ a set and $\phi \in N_k(\Delta^\op)$. Then we say a functor $\tau: \cCartop{S}\times \cPyr {M_\phi} \to \cC$ is:
 \begin{enumerate}
  \item {\em cartesian} if it is the right Kan extension of its restriction to $\cSingop{S} \times \cWedge{M_\phi}$.
  \item {\em vertically constant} if morphisms of the form $(\id, v)$ for $v \in \cPyr{V_\phi}$ are sent to isomorphisms.
 \end{enumerate}
\end{defn}

\begin{exam}
\label{Sigmaintuit}
 A functor $\cPyr 2 \to \cC$ is a diagram,
\begin{equation*}
  \xymatrixrowsep{.8pc}\xymatrix{ & & c_{[0;2]} \ar[rd] \ar[ld] & & \\
  & c_{[0;1]} \ar[rd] \ar[ld] & & c_{[1;2]} \ar[rd] \ar[ld] & \\
  c_{[0;0]} & & c_{[1;1]} & & c_{[2;2]} \, . }
 \end{equation*}
Such a diagram is cartesian if it is the right Kan extension of its restriction to $\cWedge 2$, i.e., if the middle square is a pullback in $\cC$. In general, a functor $\cPyr n \to \cC$ is pyramid of spans on $n+1$ objects. It being cartesian says higher tiers of this pyramid consist of a coherent choice of pullbacks of the $n$ spans along the bottom two tiers. 
 \end{exam}
  
 \begin{exam}
A functor $\cCartop {\{1,2\}}\to \cC$ is a diagram,
\begin{equation*}
 \xymatrixrowsep{.8pc}\xymatrix{ & c_{\{1,2\}} \ar[rd] \ar[ld] & \\
 c_{\{1\}} \ar[rd] & &\ar[ld] c_{\{2\}} \\
  & c_{\emptyset} & }
\end{equation*}
Such a diagram is cartesian if it presents $c_{\{1,2\}}$ as the product of $c_{\{1\}}$ and $c_{\{2\}}$ and $c_{\emptyset}$ is terminal. Similarly, a cartesian functor $\cCartop S\to \cC$ encodes a coherent choice of products for a collection of objects of $\cC$ labelled by the elements of $S$.
\end{exam}

\begin{defn}[\cite{penney2017simplicial} 3.11]
Let $\cC$ be an $\infty$-category having finite limits. Then the {\em symmetric monoidal $\inftwo$-category of bispans in $\cC$}, denoted $\csmtSpan \cC$, is given by the functor
\begin{equation*}
 \FinDel \to \qCat, \quad (S_*, [n]) \mapsto \Sp_{S,n} (\cC),
\end{equation*}
where $\Sp_{S,n}(\cC)$ is the quasi-category having $k$-simplices
\begin{equation*}
 \left\{ \cCartop S \times \cPyr {n,k} \to \cC \ | \ {\rm cartesian}, \ {\rm vertically} \  {\rm constant}\right\}
\end{equation*} 
\end{defn}

In Section \ref{Sec:totlax} we will be writing explicit functors into the unstraightening of $\csmtSpan{\cC}$ in the special case of $\cC$ the nerve of an ordinary category $C$ having finite limits. To that end, we will make use of the following explicit description of the unstraightening:
\begin{prop}[\cite{penney2017simplicial} 3.24]
\label{UnstrSimps}
Let $C$ be a category with finite limits, and let $\Unstr C_k$ be the set
\begin{equation*}
 \left\{ \left((f,\phi)\in N_k(\FinDel), \, \tau: \Cartop{f(0)}\times\Pyr{M_\phi} \to C\right) \ | \ \tau \ {\rm cartesian}, \ {\rm vertically} \  {\rm constant}\right\}.
\end{equation*}
Then the sets $\Unstr C_k$ assemble into a sub simplicial set of the unstraightening of $\csmtSpan {N(C)}$.
\end{prop}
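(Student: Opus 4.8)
The plan is to realise $\Unstr{C}_\bullet$ as an explicit sub-simplicial set of a concrete model for the unstraightening, and the natural such model is Lurie's relative nerve $N_F(\FinDel)$ of the functor $F=\Sp_{-,-}(N(C)):\FinDel\to\qCat$, $(S_*,[n])\mapsto \Sp_{S,n}(N(C))$. Since $\FinDel$ is the nerve of an ordinary category, $N_F(\FinDel)$ computes $\Un\bigl(\csmtSpan{N(C)}\bigr)$ (\cite{HTT} 3.2.5). Thus it suffices to produce a monomorphism of simplicial sets $\Unstr{C}_\bullet\hookrightarrow N_F(\FinDel)$ and to verify that its image is stable under the face and degeneracy operators; this is exactly the content of the assertion, and I will not claim the inclusion is surjective (nor is that needed).

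First I would unwind the relative nerve. Writing $\sigma=(f,\phi)$ for a base simplex in $N_k(\FinDel)$, a $k$-simplex of $N_F(\FinDel)$ over $\sigma$ consists of, for every nonempty $J\subseteq[k]$ with maximum $j_J$, a simplex $\tau_J\in F(\sigma(j_J))_{|J|-1}$ — that is, a cartesian, vertically constant functor $\Cartop{S_{j_J}}\times\Pyr{n_{j_J},|J|-1}\to C$ — subject to the compatibility that for $J'\subseteq J$ the restriction of $\tau_J$ to $\Delta^{J'}$ agrees with the image of $\tau_{J'}$ under the pushforward functor $F(\sigma(\max J'\le\max J))$. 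The task is then reduced to the following key point: such a compatible family is the same datum as a single cartesian, vertically constant functor $\tau:\Cartop{f(0)}\times\Pyr{M_\phi}\to C$.

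To establish this bijection I would exploit that $M_\phi$ is the Grothendieck construction of $[k]^{\rm op}\xrightarrow{\phi}\Delta\hookrightarrow\Cat$, together with the naturality functors $M(\eta)$ and $M(\gamma)$. These organise the constituent posets into $M_\phi$, and the contravariant restriction maps $\Cart{f}$ organise the sets $S_j$; the content is that restricting a single $\tau$ reproduces each $\tau_J$ precisely because the pushforward functors of $\csmtSpan{\cC}$ are implemented by pullback along spans in the $\Pyr{M_\phi}$-direction and by the maps $\Cart{f}$ in the $\Cart$-direction. Since a cartesian functor is right Kan extended from $\cSingop{f(0)}\times\cWedge{M_\phi}$, its higher tiers are forced to be coherent pullbacks (the pyramid-of-spans picture), so restriction automatically realises the pullback-composition defining $F(\sigma(\max J'\le\max J))$. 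I would also record that the cartesian and vertically constant conditions are local: $\Wedge{M_\phi}$ and $V_\phi$ restrict compatibly along the relevant $M(\gamma)$, so $\tau$ is cartesian and vertically constant if and only if every $\tau_J$ is.

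Finally I would check naturality in $[k]$: the face and degeneracy operators on $\Unstr{C}_\bullet$, defined via $d_i,s_i$ on $N_k(\FinDel)$ and the induced functors $M(\delta^i),M(\sigma^i)$ and $\Cart{f}$, must intertwine with those of $N_F(\FinDel)$. Here the outer face $d_0$ is where the contravariance of $\Cart{(-)}$ forces the source index, explaining the appearance of $f(0)$, and the vertically constant condition is exactly what guarantees that degeneracies yield admissible functors. This makes the assignment a map of simplicial sets, and it is a monomorphism since $\tau$ is recovered from the family $\{\tau_J\}$. I expect the main obstacle to be the heart of the third paragraph: rigorously matching the relative-nerve compatibility data — phrased through the composition (pushforward) functors of the bispan $\inftwo$-category — with the restrictions of one cartesian functor, i.e. reconciling the per-vertex constant orders appearing in each $\tau_J$ with the varying orders packaged into $M_\phi$, and verifying that the pullback-composition is faithfully encoded by the coherent pullbacks built into the cartesian condition.
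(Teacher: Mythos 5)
A preliminary remark: this paper does not prove Proposition \ref{UnstrSimps} --- it is imported from (\cite{penney2017simplicial} 3.24) --- so your attempt can only be measured against the model of $\csmtSpan{\cC}$ that the paper actually works with. That model matters. In the construction taken from \cite{penney2017simplicial} (following Haugseng \cite{RuneSpans}), the total space of $\Un(\csmtSpan{N(C)}) \to \cFinDel$ already has $k$-simplices given by a simplex $(f,\phi)$ of the base together with a map of simplicial sets out of the nerve of $\Cartop{f(0)}\times\Pyr{M_\phi}$ subject to $\infty$-categorical cartesian and vertical-constancy conditions; this is precisely why the statement can speak of a literal \emph{sub} simplicial set, and why Section \ref{Sec:totlax} can feed the functors $\beta(\theta,\gamma)$ directly into it. The intended argument is correspondingly short: an ordinary functor $\tau:\Cartop{f(0)}\times\Pyr{M_\phi}\to C$ that is a strict right Kan extension from $\Singop{f(0)}\times\Wedge{M_\phi}$ gives, after taking nerves, an $\infty$-categorical right Kan extension (limits in $N(C)$ of such diagrams are ordinary limits in $C$), and the cartesian and vertically constant conditions are stable under the simplicial operators, which act through $M(\gamma)$, $M(\eta)$ and $\Cart{f}$ and preserve $\Wedge{M_\phi}$ and $V_\phi$. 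Your route through Lurie's relative nerve is genuinely different: it trades this literal inclusion for an injection into another model of the unstraightening, which both requires an identification of the two models that you do not supply and relocates all of the difficulty into the gluing problem of your third paragraph.

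That gluing problem is a real gap, not a formality. To know that your restrictions $\tau_J$ are simplices of $\Sp_{f(j_J),\phi(j_J)}(N(C))$ at all, you must show that restricting a right Kan extension from $\Singop{f(0)}\times\Wedge{M_\phi}$ along the comparison functor $\Cartop{f(j_J)}\times\Pyr{\phi(j_J),|J|-1}\to\Cartop{f(0)}\times\Pyr{M_\phi}$ is again a right Kan extension from $\Singop{f(j_J)}\times\Wedge{\phi(j_J),|J|-1}$; restriction does not preserve right Kan extensions in general, and the needed cofinality comparison of slice categories is exactly where the combinatorics of $M_\phi$ does its work. You name this as ``the main obstacle'' but do not carry it out. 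Separately, your claim that a compatible family $\{\tau_J\}$ ``is the same datum as'' a single cartesian $\tau$ is both unnecessary (a simplicial monomorphism suffices, as you note) and almost certainly false: the images of the comparison functors do not cover $\Pyr{M_\phi}$, so a single $\tau$ carries strictly more data than the family --- consistent with the proposition asserting only a sub-object. If you keep the relative-nerve route, drop the bijection claim and supply the Kan-extension argument explicitly; otherwise the more economical proof is the direct one sketched above.
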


\subsection{Corepresenting bialgebra objects}
\label{Sec:corepbialg}

A bialgebra object in a symmetric monoidal $\inftwo$-category is, just like in an ordinary symmetric monoidal category, an object equipped with compatible algebra and coalgebra structures. The compatibility condition requires that the data defining the coalgebra structure be algebra homomorphisms, or equivalently, that the data defining the algebra structure be coalgebra homomorphisms. The data required for defining a bialgebra object can be corepresented by a combinatorially defined symmetric monoidal $\infty$-category $\csmBialg$ originally introduced by Pirashvili \cite{Pirash02} and Lack \cite{Lack}.

Naively, one would like to define $\cBialg$ to be $\cSpan{\Alg}$, where $\Alg$ is the symmetric monoidal category which corepresents algebra objects (\cite{penney2017simplicial} 2.3): the category $\Alg$ has objects finite sets and morphisms functions $p:X \to Y$ equipped with a linear ordering on $p^{-1}(y)$ for each $y \in Y$. Disjoint union endows $\Alg$ with a symmetric monoidal structure. Unfortunately $\Alg$ does not admit all pullbacks. Temporarily putting that issue aside, let us consider how $\cSpan{\Alg}$ could corepresent bialgebra objects.

 By considering morphisms of the form
\begin{equation*}
 \xymatrixrowsep{.2pc} \xymatrix{ & X \ar[rdd] \ar@{=}[ldd] &  &  & & X \ar[ldd] \ar@{=}[rdd] & \\
  & & & {\rm and} & & & \\
  X & & Y & & Y & & X }
\end{equation*}
one observes that $\cSpan{\Alg}$ contains $\Alg$ and $\Coalg:=\Alg^\op$ as wide subcategories. Therefore a symmetric monoidal functor out of $\cSpan{\Alg}$ specifies an object carrying both an algebra and a coalgebra structure. The compatibility of these structures arises from the requirement that functors preserve composition of spans. For example, the condition that the coproduct respects products arises from the composite
\begin{equation*}
 \xymatrixrowsep{.8pc} \xymatrix{ & & \underline{4} \ar[ld] \ar[rd] & & \\
 & \underline{2} \ar@{=}[ld] \ar[rd] & & \underline{2} \ar@{=}[rd] \ar[ld] & \\
 \underline{2} & & \underline{1} & & \underline{2} \ .}
\end{equation*}

To give a proper definition of $\cBialg$ we must find a way to compose span diagrams in $\Alg$ despite its lack of pullbacks. 

\begin{defn}[\cite{Pirash02}]
A (not necessarily commutative) square in $\Alg$
\begin{equation*}
 \xymatrixrowsep{1.1pc}\xymatrix{X' \ar[r]^{p'} \ar[d]_{q'} & Y' \ar[d]^q \\
 X \ar[r]_{p} & Y}
\end{equation*}
is called a {\em pseudo-pullback square} if
\begin{enumerate}
 \item the image of the square under the forgetful functor $\Alg \to \Fin$ is a pullback square;
 \item for each $x \in X$, the induced map $p_*: (q')^{-1}(x) \to q^{-1}(p(x))$ is an isomorphism of ordered sets; and,
 \item for each $y \in Y'$ the induced map $q_*: (p')^{-1}(y) \to p^{-1}(q(y))$ is an isomorphism of ordered sets.
\end{enumerate}
\end{defn}

\begin{exam}
 Consider the following pullback square of finite sets
 \begin{equation*}
  \xymatrixrowsep{1.1pc} \xymatrix{\underline{4} \ar[d]_-{m^2} \ar[r]^-{\overline{m}^2} & \underline{2} \ar[d]^-{m} \\
  \underline{2} \ar[r]_-{m} & \underline{1} }
 \end{equation*}
Denote by $\underline{2} = \{y_1, y_2\}$, $\underline{4} = \{y_{1,1}, y_{1,2}, y_{2,1}, y_{2,2}\}$, $m^2(y_{i,j}) = y_i$ and $\overline{m}^2(y_{i,j}) = y_j$. If one lifts this to a diagram in $\Alg$ by setting the ordering on the fibres of $m$ to be $y_1 \leq y_2$, such a square is a pseudo-pullback when one equips $m^2$ and $\overline{m}^2$ with the orderings
\begin{equation*}
 (m^2)^{-1}(y_i) = (y_{i,1} \leq y_{i,2}) \ {\rm and} \ (\overline{m}^2)^{-1}(y_i) = (y_{1,i} \leq y_{2,i}).
\end{equation*}
The resulting diagram in $\Alg$ {\em does not commute}: the composites $m \circ m^2$ and $m \circ \overline{m}^2$ define, respectively, the following inequivalent orderings on $\underline{4}$:
\begin{equation*}
 (y_{1,1}\leq y_{1,2} \leq y_{2,1} \leq y_{2,2}) \ {\rm and} (y_{1,1}\leq y_{2,1} \leq y_{1,2} \leq y_{2,2}).
\end{equation*}
\end{exam}

The failure of commutativity in the above example is a general property of pseudo-pullback squares in $\Alg$. Nonetheless, since pseudo-pullback squares in $\Alg$ are sent to pullback squares in $\Fin$ under the forgetful functor and isomorphisms in $\Fin$ have unique lifts in $\Alg$, such squares do satisfy all of the properties required of pullback squares to define the composition in a category of spans. The only care that must be taken is that one can no longer use diagrams of the form $\Pyr n$ to define strings of composable morphisms.

For each $n\geq 0$, let $G^n$ be the reflexive directed graph having vertices subintervals $[a;b] \subset [n]$. There is an edge $[a;b] \to [a'; b']$ if and only if $[a';b'] \subset [a;b]$ and $[a;b]$ contains at most one more element than $[a';b']$. Define the categories $\Pyrnc n$ to be the free categories on $G^n$.

\begin{exam}
 A functor $\Pyrnc 2 \to \Alg$ is a diagram,
 \begin{equation*}
  \xymatrixrowsep{.8pc}\xymatrix{ & & X_{[0;2]} \ar[rd] \ar[ld] \ar@<.7ex>[dd] \ar@<-.7ex>[dd] & & \\
   & X_{[0;1]} \ar[rd] \ar[ld] & & X_{[1;2]} \ar[rd] \ar[ld] & \\
   X_{[0;0]} & & X_{[1;1]} & & X_{[2;2]} }
 \end{equation*}
A $\Pyrnc n$-diagram is a not-necessarily-commutative diagram in the same shape as a $\Pyr n$-diagram.
\end{exam}

The categories $\Pyrnc n$ assemble into a functor $\Pyrnc{\bullet}:\Delta \to \Cat$. A morphism $\phi:[n] \to [m]$ is sent to the functor $\Pyrnc{\phi}:\Pyrnc n \to \Pyrnc m$ which is defined on objects by sending $[a;b]$ to $[\phi(a);\phi(b)]$ and on the generating morphisms by
\begin{eqnarray}
 \left([a;b] \to [a+1;b]\right) &\mapsto& \left(\xymatrixcolsep{.6pc}\xymatrix{[\phi(a);\phi(b)]\ar[r]& [\phi(a)+1;\phi(b)] \ar[r] & \cdots \ar[r] & [\phi(a+1);\phi(b)]}\right) \\
 \left([a;b] \to [a;b-1]\right) &\mapsto& \left(\xymatrixcolsep{.6pc}\xymatrix{[\phi(a);\phi(b)]\ar[r]& [\phi(a);\phi(b)-1] \ar[r] & \cdots \ar[r] & [\phi(a+1);\phi(b-1)]}\right) \ . \nonumber
\end{eqnarray}

We define a functor $F: \Pyrnc n \to \Alg$ to be {\em pseudo-cartesian} if for each $\phi:[2] \to [n]$ the middle square of $F\Pyrnc{\phi}: \Pyrnc 2 \to \Alg$ is a pseudo-pullback.

\begin{defn}
\label{BialgDefn}
 For each $(S_*,[n]) \in \FinDel$, let $\cBialg_{S,n}$ denote the nerve of the groupoid $\Bialg_{S,n}$ of functors $F:\Cart S \times \Pyrnc n \to \Alg$ such that
\begin{enumerate}
 \item For each $U \in \Cart S$, the functor $F(U,-):\Pyrnc n \to \Alg$ is pseudo-cartesian.
 \item For each $x \in \Pyrnc n$, the functor $F(-,x):\Cart S \to \Alg$ is cocartesian.
\end{enumerate}
These assemble into a symmetric monoidal $\inftwo$-category
\begin{equation*}
 \csmBialg: \FinDel \to \qCat, \quad (S_*,[n]) \mapsto \cBialg_{S,n}.
\end{equation*}
\end{defn}

Note that projecting on to the right and left endpoints define, respectively, natural transformations $\Pyrnc{\bullet} \Rightarrow [\bullet]$ and $\Pyrnc{\bullet} \Rightarrow [\bullet]^{\rm op}$. These induce, respectively, symmetric monoidal functors
\begin{equation*}
 \xymatrixcolsep{1.2pc}\xymatrix{\csmAlg \ar[r]^-{\iota_a}& \csmBialg & {\rm and} & \csmCoalg \ar[r]^-{\iota_c} & \csmBialg\, ,}
\end{equation*}
where $\csmAlg$ and $\csmCoalg$ are the nerves of $\Alg$ and $\Coalg$. 

The following bialgebraic structures will be the focus of this paper.
\begin{defn}
\label{DefnLaxBialg}
Let $\cB^\otimes$ be a symmetric monoidal $\inftwo$-category.
\begin{itemize}
 \item A {\em bialgebra object} in $\cB^\otimes$ is a symmetric monoidal functor $\csmBialg \to \cB^\otimes$.
 \item A {\em totally lax bialgebra object} in $\cB^\otimes$ is a symmetric monoidal lax functor $\csmBialg \rightsquigarrow \cB^\otimes$.
 \item A {\em lax bialgebra object} in $\cB^\otimes$ is a totally lax bialgebra object in $\cB^\otimes$ which restricts to symmetric monoidal functors $\csmAlg \to \cB^\otimes$ and $\csmCoalg \to \cB^\otimes$, i.e.,
 \begin{equation*}
  \xymatrixrowsep{.9pc} \xymatrix{ \csmAlg\ar[d]_-{\iota_a} \ar[rd] & \\
  \csmBialg \ar@{~>}[r] & \cB^\otimes \\
  \csmCoalg\ar[u]^-{\iota_c} \ar[ru] & }
 \end{equation*}
\end{itemize}
\end{defn}

Informally, a totally lax bialgebra is an object equipped with both a laxly associative product and laxly coassociative coproduct which are laxly compatible. A lax bialgebra is a totally lax bialgebra for which the product is associatve and coproduct is coassociative.

\section{Double \texorpdfstring{$2$}{2}-Segal spaces}
\label{Sec:double2Seg}

Taking for granted the definition of a $2$-Segal space, which we review in Section \ref{Sec:Rev2Seg}, the notion of a $2$-Segal space is straightforward to define: It is a bisimplicial space $\cX_{\bullet,\bullet}$ such that the simplicial spaces $\cX_{\bullet,k}$ and $\cX_{k,\bullet}$ are $2$-Segal for each $k$. The purpose of this section is to populate the world of double $2$-Segal spaces by constructing examples using the $S$-construction of augmented proto-exact $\infty$-categories. This is done in Section \ref{Sec:augexact}.

\subsection{Review of \texorpdfstring{$2$}{2}-Segal spaces}
\label{Sec:Rev2Seg}

Dyckerhoff--Kapranov (\cite{DK12} 2.3.2) were the first to introduce a $2$-dimensional generalisation of the usual Segal condition for a simplicial space: the {\em $2$-Segal condition}. Independently, G\'alvez-Carrillo--Kock--Tonks (\cite{KockI} 3) provided an equivalent definition which requires certain pushout squares in $\Simp$ to be sent to pullbacks. In a previous work \cite{penney2017simplicial} we have given a third formulation of the $2$-Segal condition which shall be the most convenient for our purposes. 

Let $\cX \in \cC_\Simp$ be a simplicial object in an $\infty$-category $\cC$ having finite limits. We have shown (\cite{penney2017simplicial} 4.1) that the object of $1$-simplices $\cX_1$ canonically carries the structure of a {\em lax algebra object} in $\csmtSpan{\cC}$: One has the functor
\begin{equation*}
  \alpha_\cX: \xymatrix{\csmAlg \ar@{~>}[r]^-{\alpha} & \csmtSpan{\csSetfop} \ar[r] & \csmtSpan{\cC}},
 \end{equation*}
where the second functor is obtained from the right Kan extension
\begin{equation*}
 \xymatrixrowsep{1.1pc} \xymatrix{ \csSetfop \ar[r] & \cC \\
 \Simpop \ar@{^{(}->}[u] \ar[ru]_-\cX & }
\end{equation*}
and $\alpha$ is a symmetric monoidal lax functor endowing the standard $1$-simplex $\Simplex 1$ with the structure of lax algebra.

Informally, the lax functor $\alpha$ is, on objects, simply 
\begin{equation*}
\alpha:X\mapsto X\cdot \Simplex 1 = \coprod_{x \in X} \Simplex 1.
\end{equation*}
The lax functor $\alpha$ sends the morphism $p$ to the morphism $\alpha(p) \in \csmtSpan{\csSetfop}$ given by the diagram
\begin{equation*}
 \xymatrixrowsep{.8pc}\xymatrix{ & \displaystyle \coprod_{y \in Y} \Simplex{|p^{-1}(y)|} & \\
 X\cdot \Simplex 1 \ar[ru]^-{\sigma} & & \ar[lu]_-{\lambda} Y\cdot \Simplex 1 \, ,}
\end{equation*}
The morphism $\lambda$ sends the $1$-simplex associated to the element $y \in Y$ to the long edge of the standard simplex $\Simplex{|p^{-1}(y)|}$. Note that one can label the edges along the spine of $\Simplex{|p^{-1}(y)|}$ by the elements of $p^{-1}(y)$ using the linear ordering. The morphism $\sigma$ sends the $1$-simplex associated to $x \in X$ to the appropriate edge along the spine of $\Simplex{|p^{-1}(p(x))|}$. 

\begin{exam}
For the morphism $m:\underline{2} \to \underline{1}$, the morphism $\alpha(m)$ in $\csmtSpan{\csSetfop}$ is given by the diagram 
\begin{equation*}
 \includegraphics[height=1.45cm]{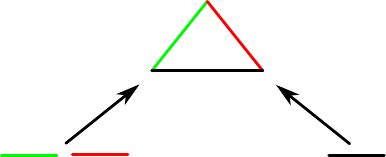}
\end{equation*}
\end{exam}
\begin{exam}
  Consider the morphism $(m \amalg \id):\underline{3} = \underline{2} \amalg \underline{1} \to \underline{1} \amalg \underline{1} = \underline{2}$. Then $\alpha(m \amalg \id)$ is given by the diagram
  \begin{equation*}
   \includegraphics[height=1.45cm]{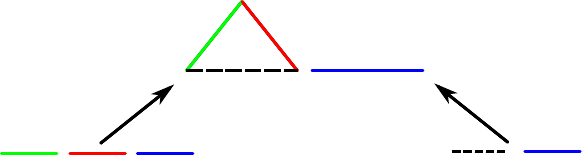} 
  \end{equation*}
\end{exam}

The lax structure on the functor $\alpha$ is given by associating to each pair of composable morphisms $\xymatrixcolsep{1.1pc}\xymatrix{X_0 \ar[r]^-{p_1} & X_1 \ar[r]^-{p_2} & X_2}$ in $\csmAlg$ a $2$-morphism in $\csmtSpan{\csSetfop}$ of the form
\begin{equation*}
 \xymatrixrowsep{.8pc}\xymatrix{ & \alpha(p_2p_1) \ar@{=}[d] & \\
 \alpha(X_0) \ar[ru] \ar[rd] \ar[r] & \alpha(p_2p_1) & \alpha(X_2) \ . \ar[lu] \ar[ld] \ar[l] & \\
  & \alpha(p_2) \coprod_{\alpha(X_1)} \alpha(p_1) \ar[u] & }
\end{equation*}

\begin{exam}
Consider the pair of composable morphisms $\xymatrixcolsep{1.7pc}\xymatrix{\underline{3} \ar[r]^-{m \amalg \id} & \underline{2} \ar[r]^-{m} & \underline{1}}$. The lax structure on $\alpha$ is given by the diagram
\begin{equation*}
 \includegraphics[height=4.5cm]{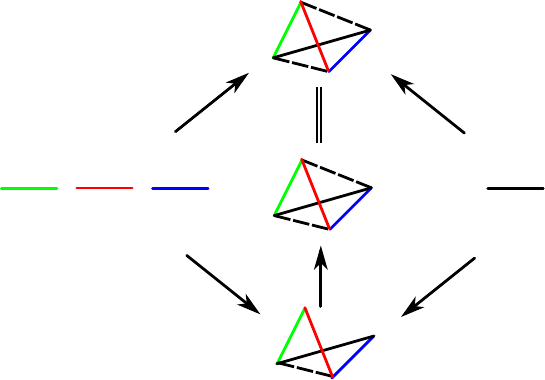} 
\end{equation*}
\end{exam}

\begin{defn}[\cite{penney2017simplicial} 4.14]
 \label{Defn:2Seg}
 A simplicial object $\cX \in \cC_\Simp$ is a $2$-Segal object if and only if $\alpha_\cX$ is a symmetric monoidal functor.
\end{defn}
\begin{remark}
 What we call a $2$-Segal object is called a {\em unital $2$-Segal object} in \cite{DK12} and a {\em decomposition space} in \cite{KockI}.
\end{remark}
For a $2$-Segal object $\cX \in \cC_\Simp$, the functor $\alpha_\cX$ makes $\cX_1$ an algebra object in $\csmtSpan{\cC}$ called the {\em universal Hall algebra} of $\cX$. It is so named since Dyckerhoff--Kapranov have shown (\cite{DK12} 8) that various Hall algebra-like constructions that have appeared in the literature can be seen as particular linearisations of the universal Hall algebra.

The $\infty$-category of $2$-Segal objects in $\cC$, denoted by $\ctSeg{\cC}$, is defined to be the full subcategory of $\cC_\Simp$ on the $2$-Segal objects. The $2$-Segal condition can also be formulated as requiring that $\cX$ send so-called {\em active-inert pushout squares} in $\Simp$ to pullback squares in $\cC$ (\cite{KockI} 3.1). This formulation implies that $\ctSeg \cC$ has finite limits which are computed object-wise in $\cC$. With this in hand one can give a concise definition of a double $2$-Segal object:
\begin{defn}
 \label{Defn:double2Seg}
 A double $2$-Segal object in an $\infty$-category $\cC$ is a $2$-Segal object $\cX$ in $\ctSeg \cC$, the $\infty$-category of $2$-Segal objects in $\cC$. 
\end{defn}

Next, in Section \ref{Sec:augexact} we discuss how to construct examples of $2$-Segal and double $2$-Segal objects.

\subsection{The \texorpdfstring{$S$}{S}-construction of augmented proto-exact \texorpdfstring{$\infty$}{oo}-categories}
\label{Sec:augexact}
Our main examples of double $2$-Segal spaces will arise from the Waldhausen $S$-construction \cite{Waldhausen} of {\em augmented proto-exact $\infty$-categories}: not necessarily additive, or even pointed, generalisations of Barwick's {\em exact $\infty$-categories} \cite{barwick2015exact}. We will begin by developing their basic theory as these technical developments make the construction of double $2$-Segal spaces straightforward.

To give the definition of an augmented proto-exact $\infty$-category we must first recall some terminology. First, a morphism $f:c \to d$ in an $\infty$-category $\cC$ is {\em monic} if the commutative square
\begin{equation*}
 \xymatrixrowsep{1.1pc} \xymatrix{ c \ar@{=}[r] \ar@{=}[d] & c \ar[d]^-f \\
 c \ar[r]_-f & d}
\end{equation*}
is a pullback square. In particular, a functor $F:\cA \to \cB$ between $\infty$-categories is a monic morphism in $\Catoo$ if and only if it is an inclusion of subcategories which is full on equivalences. Next, a functor $F: \cC \to \cD$ of $\infty$-categories is {\em final} if precomposition with $F$ preserves colimits, or equivalently, if for each $c \in \cC$ the space $(F^{c/})^\mathrm{gpd}$ is contractible. Dually, $F$ is {\em initial} $F^\op$ is final.

\begin{defn}
 \label{Def:augexact}
 An augmented proto-exact $\infty$-category is an $\infty$-category $\cA$ equipped with subcategories of {\em null objects} $\cN$, {\em admissible monomorphisms} $\cM$ and {\em admissible epimorphisms} $\cE$ that fit into a diagram
\begin{equation}
\label{APEsubs}
\xymatrixrowsep{1.1pc} \xymatrix{ \cN \ar[r]^-{0_\cE} \ar[d]_-{0_\cM} & \cE \ar[d]^-{\iota_\cE} \\
\cM \ar[r]_-{\iota_\cM} & \cA}
\end{equation}
such that:
\begin{enumerate}
 \item \label{Cond:monic} the functors $0_\cM$, $0_\cE$, $\iota_\cE$ and $\iota_\cM$ are monic; 
 \item \label{Cond:null} the subcategory $\cN$ is in $\cS$, that is, has only invertible morphisms; 
 \item \label{Cond:init}for any $a \in \cA$, the spaces $(0_\cM)_{/a}$ and $0_\cE^{a/}$ are contractible and the functors
 \begin{equation*}
  \xymatrixcolsep{1.1pc} \xymatrix{ (0_\cM)_{/a} \ar[r] & \cA_{/a} &  0_\cE^{a/} \ar[r] & \cA^{a/}}
 \end{equation*}
are, respectively, final and initial.
 \item \label{Cond:bicart} any commutative diagram in $\cA$
 \begin{equation}
 \label{APEsquare}
  \xymatrixrowsep{1.1pc} \xymatrix{ A_1 \ar[d]_-{f} \ar[r]^-{g_1} & A_2 \ar[d]^-{f'} \\
  A_1' \ar[r]_-{g_2} & A_2'} 
 \end{equation}
   with $f,f' \in \cE$ and $g_1,g_2 \in \cM$ is a pullback if and only if it is a pushout;
 \item \label{Cond:monostab}morphisms in $\cM$ admit and are stable under pushouts along morphisms in $\cE$; and 
 \item \label{Cond:epistab} morphisms in $\cE$ admit and are stable under pullbacks along morphisms in $\cM$.
\end{enumerate}
\end{defn}
\begin{remark}
 Condition \ref{Cond:init} above says that for each $a \in \cA$ there is an essentially unique morphism $z \to a \in \cM$ and $a \to z' \in \cE$ with $z$ and $z'$ being possibly distinct objects of $\cN$. As such, every augmented proto-exact $\infty$-category $\cA$ which is the nerve of an ordinary category defines an augmented stable double category in the sense of Bergner--Osorno--Ozornova--Rovelli--Scheimbauer \cite{WiTWald}. A work in progress from the same authors introduces the notion of an augmented stable double Segal space, and we suspect that every augmented proto-exact $\infty$-category defines such an object.
\end{remark}

The main class of examples of augmented proto-exact $\infty$-categories are familiar inputs for the $\infty$-categorical Waldhausen $S$-construction.
\begin{exam}
An augmented proto-exact $\infty$-category $\cA$ which is pointed and whose null objects $\cN$ is the full subcategory of zero objects is a proto-exact $\infty$-category (\cite{DK12} 7.2.1)\footnote{Note that what we call proto-exact $\infty$-categories Dyckerhoff--Kapranov merely call exact $\infty$-categories. Our terminology is consistent with the notion of a proto-exact category introduced by these authors in Section 2.4 of the same paper and serves to distinguish such categories from Barwick's exact $\infty$-categories}. If $\cA$ is in addition additive, then it is an exact $\infty$-category (\cite{barwick2015exact} 3.1). This class of examples includes familiar examples such as stable $\infty$-categories and the nerves of Quillen exact categories.
\end{exam}

In the above examples the spaces of null objects was contractible. The following example serves as the main motivation for expanding the definition to allow for many connected components.
\begin{exam}
 For a poset $X$, let $\cArr X$ denote the $\infty$-category of functors $\cFun([1],N(X))$. Objects of $\cArr X$ correspond to pairs $ij$ with $ i \leq j$ in $X$ and one has a morphism $ij \to i'j'$ if and only if $i \leq i'$ and $j \leq j'$ The $\infty$-category $\cArr X$ can be made into an augmented proto-exact $\infty$-category as follows: Set $\cM$ to consist of those morphisms $ij \to ij'$, set $\cE$ to be those morphisms and $ij \to i'j$ and $\cN$ to be the full subcategory on the objects $ii$. 
\end{exam}

A functor between augmented proto-exact $\infty$-categories is said to be {\em exact} if it preserves the subcategories of null objects and admissible mono- and epi- morphisms, and sends bicartesian squares of the form Eq.~\ref{APEsquare} to bicartesian squares. For any two augmented proto-exact $\infty$-categories $\cA$ and $\cA'$ one has the full subcategory 
\begin{equation*}
\xymatrixcolsep{1.1pc} \xymatrix{\cFun^\mathrm{ex}(\cA,\cA')\ar@{^{(}->}[r] & \cFun(\cA,\cA')}
\end{equation*}
on the exact functors.

\begin{defn}
Let $\apExact$ denote the simplicially enriched category whose objects are the augmented proto-exact $\infty$-categories and whose mapping spaces are $\cMap^\mathrm{ex}(-,-)$. The {\em $\infty$-category of augmented proto-exact $\infty$-categories}, $\capExact$, is the simplicial nerve of $\apExact$.
\end{defn}

Before moving on with our study of the categorical properties of $\capExact$, let us consider a few examples of exact functors between augmented proto-exact $\infty$-categories.
\begin{exam}
 An exact functor $F:\cC \to \cC'$ between exact $\infty$-categories (\cite{barwick2015exact} 4.1) is also an exact functor between augmented proto-exact $\infty$-categories.
\end{exam}

\begin{exam}
\label{arrcosimp}
 Post-composing with an order-preserving morphism $\phi: X \to Y$ yields a functor $\cArr \phi: \cArr X \to \cArr Y$ which is exact. Restricting to the non-empty linearly ordered posets defines a cosimplicial object in $\capExact$,
 \begin{equation*}
   \cArr \bullet: \Simp \to \capExact.
 \end{equation*}
\end{exam}

According to Definition \ref{Def:augexact}, one has a faithful functor
\begin{equation*}
 \xymatrix{ \capExact \ar@{^{(}->}[r] & \Catoo^\Box:= \cFun\left([1]\times[1],\Catoo\right)}
\end{equation*}
exhibiting $\capExact$ as a subcategory of $\Catoo^\Box$ by sending an augmented proto-exact $\infty$-category $\cA$ to the diagram in Eq.~\ref{APEsquare}. 
\begin{lem}
 \label{Lem:finlim}
 The $\infty$-category of augmented proto-exact $\infty$-categories, $\capExact$, is complete and the functor $\capExact \to \Catoo^\Box$ preserves limits.
\end{lem}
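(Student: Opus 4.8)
The plan is to realise every limit in $\capExact$ as the pointwise limit of the associated squares in $\Catoo^\Box$. Since $\Catoo$ is complete and limits in $\Catoo^\Box = \cFun([1]\times[1],\Catoo)$ are computed pointwise, $\Catoo^\Box$ is complete. Fix a diagram $\cA_\bullet\colon J \to \capExact$ and let $L = (\cN \to \cE,\ \cM \to \cA)$ be the pointwise limit of $\iota\cA_\bullet$ in $\Catoo^\Box$, so that each corner is a limit over $J$, e.g.\ $\cA = \lim_J \cA_j$; recall that mapping spaces in such a limit satisfy $\mathrm{Map}_\cA(x,y) \simeq \lim_J \mathrm{Map}_{\cA_j}(x_j,y_j)$. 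I will show that $L$ satisfies the axioms of Definition~\ref{Def:augexact}, that the legs $p_j\colon L \to \cA_j$ are exact, and that $L$ is then a limit in $\capExact$. Because $\cFun^{\mathrm{ex}}\hookrightarrow \cFun$ is the inclusion of a union of connected components, $\iota$ is faithful and induces inclusions of connected components on all mapping spaces; granting the three claims, the universal property of $L$ in $\capExact$ and the limit-preservation of $\iota$ follow formally by comparing these mapping spaces, so the whole statement reduces to verifying the axioms for $L$ and the exactness of the relevant functors.

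Axioms~\ref{Cond:monic} and~\ref{Cond:null} are immediate. A morphism is monic precisely when a certain square is a pullback, and pullbacks commute with limits, so a pointwise limit of monic morphisms is monic; hence the four structure functors of $L$ are monic, and a morphism of $\cA$ lies in $\cM$ (resp.\ $\cE$) if and only if each of its $J$-components does. For axiom~\ref{Cond:null}, the inclusion $\cS \hookrightarrow \Catoo$ has a left adjoint $(-)^{\mathrm{gpd}}$ and therefore preserves limits, so $\cN = \lim_J \cN_j$ is again a space.

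The crux is axioms~\ref{Cond:bicart},~\ref{Cond:monostab} and~\ref{Cond:epistab}, which involve pushouts and so a priori interact badly with the limit over $J$. The point that saves us is that the structure functors $\cA_j \to \cA_{j'}$ of the diagram are exact and hence preserve bicartesian squares. Given an $\cE$-$\cM$ span in $\cA$, restrict it to each $\cA_j$ and form the pushout $Q_j$; by axioms~\ref{Cond:monostab} and~\ref{Cond:bicart} in $\cA_j$ this $Q_j$ exists, is bicartesian, and has legs in $\cM_j,\cE_j$. Since every structure functor carries $Q_j$ to the pushout $Q_{j'}$, the family $\{Q_j\}$ is compatible and assembles to a square $Q$ in $\cA$; in other words the pushout, being preserved by all transition functors, is computed pointwise in the limit. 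A direct mapping-space computation, using $\mathrm{Map}_\cA(x,y)\simeq\lim_J\mathrm{Map}_{\cA_j}(x_j,y_j)$ and the commutation of finite limits of spaces with $\lim_J$, then shows that a square which is a pushout (resp.\ a pullback) in every $\cA_j$ is a pushout (resp.\ a pullback) in $\cA$. This yields axioms~\ref{Cond:monostab} and~\ref{Cond:epistab} for $L$, and axiom~\ref{Cond:bicart}: a pullback square in $\cA$ is pointwise a pullback, hence pointwise a pushout, hence a pushout in $\cA$; conversely a pushout square agrees with the pushout of its span, which we built to be bicartesian. I expect this assembly step --- legitimately computing the pushouts pointwise inside the limit --- to be the main obstacle, and it is exactly the exactness of the transition functors that removes it.

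It remains to treat axiom~\ref{Cond:init} and the universal property. For $a = (a_j) \in \cA$, exactness of the structure functors together with the contractibility in axiom~\ref{Cond:init} for each $\cA_j$ (which makes the augmentations essentially unique) shows that the null objects $z_\cM(a_j) \to a_j$ assemble into a morphism $z_\cM(a) \to a$ in $\cM$, and dually $a \to z_\cE(a)$ in $\cE$. Slices are built from functor categories and pullbacks and hence commute with $\lim_J$, so $(0_\cM)_{/a} \simeq \lim_J (0_\cM)_{/a_j}$ is contractible, and similarly for $0_\cE^{a/}$; the finality and initiality conditions of axiom~\ref{Cond:init} then reduce, the sources being contractible, to the statement that these augmentations are terminal, resp.\ initial, objects of the appropriate (co)slice --- a condition on the mapping spaces out of, resp.\ into, them, which transfers from the $\cA_j$ exactly as in the previous paragraph. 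Finally, each projection $p_j$ preserves $\cN,\cM,\cE$ and sends a bicartesian square of $\cA$ (necessarily pointwise bicartesian) to a bicartesian square of $\cA_j$, so $p_j$ is exact; and for any cone of exact functors $q_j\colon \cB \to \cA_j$ the induced functor $\cB \to L$ preserves the three subcategories and all bicartesian squares, because its composite with each $p_j$ does, hence is exact. This establishes the universal property of $L$ in $\capExact$ and shows $\iota$ preserves it, proving that $\capExact$ is complete and that $\capExact \to \Catoo^\Box$ preserves limits.
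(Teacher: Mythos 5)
Your proposal is correct and follows essentially the same strategy as the paper: form the pointwise limit in $\Catoo^\Box$, verify the axioms of Definition~\ref{Def:augexact} corner by corner (monics and spaces being closed under limits, slices commuting with limits for axiom~\ref{Cond:init}, and the pointwise computation of the relevant pushouts/pullbacks for axioms~\ref{Cond:bicart}--\ref{Cond:epistab}), and note that a functor into the limit is exact iff its components are. The only differences are cosmetic: the paper reduces axioms~\ref{Cond:bicart}--\ref{Cond:epistab} to the product and pullback cases and cites (\cite{HTT}~5.4.5.5) for the coherence of the assembled pushouts, where you sketch that step directly, and it phrases the finality argument via left fibrations rather than terminal objects.
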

\begin{proof}
 It suffices to show that for any diagram $D: \cJ \to \capExact$ the limit of the functor
 \begin{equation*}
  \xymatrix{ \cJ \ar[r]^-D & \capExact \ar@{^{(}->}[r] & \Catoo^\Box}
 \end{equation*}
is an augmented proto-exact $\infty$-category. To see this one need only observe that since limits in $\Catoo^\Box$ are computed pointwise, a functor $\cA \to \lim_{\cJ} D$ for $\cA \in \capExact$ is exact if and only if each of the component functors $\cA \to D(j)$ is exact.

Let $\cD$ denote the limit $\infty$-category $\lim_{\cJ} D$ equipped with subcategories $\cM = \lim_{\cJ} \cM(j)$, $\cE = \lim_{\cJ} \cE(j)$ and $\cN = \lim_{\cJ} \cN(j)$. Then $\cD$ satisfies Condition \ref{Cond:monic} of Definition \ref{Def:augexact} since monic morphisms are preserved by limits. Similarly, it satisfies Condition \ref{Cond:null} since $\cS$ is a full subcategory of $\Catoo$. 

As for Condition \ref{Cond:init}, for $d$ an object of $\cD$ let $d(j)$ be its image in $D(j)$. Then $(0_\cM)_{/d}$ and $0_\cE^{d/}$ are contractible since 
\begin{equation*}
 (0_\cM)_{/d} \simeq \lim_\cJ \left(0_{\cM(j)}\right)_{/d(j)} \ \mathrm{and} \ 0_\cE^{d/} \simeq \lim_\cJ 0_{\cE(j)}^{d(j)/}.
\end{equation*}
For any pair of objects $d'$ and $d$ in $\cD$ one has that $\cF(d',d) \simeq \lim_\cJ \cF(d'(j),d(j))$ where $\cF(d',d)$ and $\cF(d'(j), d(j))$ are, respectively, the pullbacks 
\begin{equation*}
 \xymatrixrowsep{1.1pc} \xymatrix{\cF(d',d) \ar[r] \ar[d] & \cD^{d'/}_{/d} \ar[d] & \cF(d'(j),d(j)) \ar[r] \ar[d] & \cD(j)^{d'(j)/}_{/d(j)} \ar[d] \\
 (0_\cM)_{/d} \ar[r] & \cD_{/d} & (0_{\cM(j)})_{/d(j)} \ar[r] & \cD(j)_{/d(j)} }
\end{equation*}
Recall that for any category $\cC$ the forgetful functor $\cC^{c/} \to \cC$ is a left fibration (\cite{HTT} 2.1.2.2), left fibrations are stable under pullbacks, and the total space of left fibration over a space is itself a space (\cite{HTT} 2.1.3.3). Putting this together we see that 
\begin{equation*}
 \cF(d'(j), d(j))^\mathrm{gpd} \simeq \cF(d'(j), d(j)) \quad \cF(d',d)^\mathrm{gpd} \simeq \cF(d',d)
\end{equation*}
and hence the functor $(0_\cM)_{/d} \to \cD_{/d}$ is final. Dualising this argument shows that $0_\cE^{d/} \to \cD^{d/}$ is initial. Therefore Condition \ref{Cond:init} is satisfied.

It remains to show that $\cD$ satisfies Conditions \ref{Cond:bicart}, \ref{Cond:monostab} and \ref{Cond:epistab}. Since an $\infty$-category has limits if and only if it has products and pullbacks, it suffices to consider cases when $\cJ$ is a set indexing a product and when $\cJ$ is the limit diagram for a pullback. In the first case, Conditions \ref{Cond:bicart}, \ref{Cond:monostab} and \ref{Cond:epistab} hold because (co)limit diagrams are computed component-wise in a product. The second case follows from (\cite{HTT} 5.4.5.5). 
\end{proof}

Recall that an $\infty$-category $\cC$ having finite products is {\em cartesian closed} if for each object $c \in \cC$ the functor $c\times -: \cC \to \cC$ has a right adjoint. 

A natural candidate for a right adjoint to $\cA \times -$ for an augmented proto-exact $\infty$-category $\cA$ is the functor $\cFun^\mathrm{ex}(\cA,-)$. However, one must first, for each $\cA,\cB \in \capExact$, endow the $\infty$-category $\cFun^\mathrm{ex}(\cA,\cB)$ with the structure of an augmented proto-exact $\infty$-category. This is done as follows: declare a functor $F: \cA \to \cB$ to be null if $F(a)$ is null for each $a \in A$, and declare a natural transformation $\eta:F \Rightarrow G$ to be an admissible mono- or epi- morphism if all of its components $\eta_a:F(a) \to G(A)$ are such. Since (co)limits are computed object-wise in the target, one can readily verify that this endows $\cFun^\mathrm{ex}(\cA,\cB)$ with an augmented proto-exact $\infty$-structure.

\begin{lem}
 \label{Lem:closed}
 For each augmented proto-exact $\infty$-category $\cA$, the functor
 \begin{equation*}
  \cFun^\mathrm{ex}(\cA,-): \capExact \to \capExact
 \end{equation*}
is right adjoint to $\cA \times -$. Hence, the $\infty$-category $\capExact$ is cartesian closed. 
\end{lem}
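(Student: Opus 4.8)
The plan is to establish the adjunction by exhibiting a natural equivalence of mapping spaces
\[ \cMap^\mathrm{ex}(\cA\times\cB,\cC)\simeq\cMap^\mathrm{ex}(\cB,\cFun^\mathrm{ex}(\cA,\cC)), \]
natural in $\cB$ and $\cC$, and then to invoke the fact that $\cA$ is arbitrary to conclude cartesian closedness. The starting point is the cartesian closure of $\Catoo$, and hence of $\Catoo^\Box$, which supplies a transposition (currying) equivalence $\cMap(\cA\times\cB,\cC)\simeq\cMap(\cB,\cFun(\cA,\cC))$ at the level of underlying $\infty$-categories. Since exactness is invariant under natural equivalence, $\cFun^\mathrm{ex}(\cA,\cC)$ is a full subcategory of $\cFun(\cA,\cC)$ closed under equivalence, so each $\cMap^\mathrm{ex}$ is a union of connected components of the corresponding $\cMap$. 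It therefore suffices to show that the transposition identifies the relevant components on the two sides.

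The heart of the argument is the claim that a functor $F:\cA\times\cB\to\cC$ is exact if and only if its transpose $\tilde F:\cB\to\cFun(\cA,\cC)$ factors through $\cFun^\mathrm{ex}(\cA,\cC)$ and is exact as a functor into that augmented proto-exact $\infty$-category. To verify this I would unwind both exactness conditions using three structural facts. First, by Lemma~\ref{Lem:finlim} the proto-exact structure on $\cA\times\cB$ is computed pointwise, so that $\cN_{\cA\times\cB}=\cN_\cA\times\cN_\cB$, and likewise for $\cM$, for $\cE$, and for the bicartesian squares of the form Eq.~\ref{APEsquare}. Second, the proto-exact structure on $\cFun^\mathrm{ex}(\cA,\cC)$ is pointwise by construction, with limits and colimits there computed objectwise in $\cC$. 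Third, every morphism of $\cA\times\cB$ factors as $(\id,\beta)\circ(\alpha,\id)$, which reduces preservation of admissible mono- and epimorphisms to the two separate variables, while any bicartesian square in $\cA\times\cB$ is a pair of bicartesian squares, which a Fubini-type argument matches against the pointwise pullback and pushout conditions defining exactness of $\tilde F$.

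I expect the main obstacle to be the forward direction of this claim: deducing from joint exactness of $F$ that each restriction $F(-,b)$ and $F(a,-)$ is itself exact, so that $\tilde F$ is a well-defined functor \emph{into} $\cFun^\mathrm{ex}(\cA,\cC)$ rather than merely into $\cFun(\cA,\cC)$. The delicate point is that joint exactness directly constrains $F$ only on the ``doubly null'' locus $\cN_\cA\times\cN_\cB$, whereas well-definedness of the transpose requires $F(a,b)$ to be null whenever a single coordinate is null. Propagating null-preservation from this corner to each variable separately is exactly where the full strength of the augmented proto-exact axioms must enter, in particular Condition~\ref{Cond:init}, whose finality and initiality clauses pin down the null objects $z_b\rightarrowtail b\twoheadrightarrow z'_b$ approximating an arbitrary object; combining these with the stability axioms \ref{Cond:monostab} and \ref{Cond:epistab} and with preservation of bicartesian squares should force the required vanishing. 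Once the key claim is secured, naturality in $\cB$ and $\cC$ is inherited from naturality of the $\Catoo$-transposition, yielding the adjunction $\cA\times-\dashv\cFun^\mathrm{ex}(\cA,-)$; as $\cA$ was arbitrary, $\capExact$ is cartesian closed.
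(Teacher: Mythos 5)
Your route is the same as the paper's: curry the underlying (non-exact) functor categories using the cartesian closure of $\Catoo$, and then argue that the transposition restricts to an equivalence between the exact mapping spaces. (The paper secures the naturality you gesture at by passing to the simplicial enrichment of $\apExact$ and producing a strict isomorphism of simplicial sets via \cite{HTT} 5.2.4.5, which is a cleaner way to package ``natural in $\cB$ and $\cC$''; this is a cosmetic difference.) Where you differ is that you explicitly isolate the point on which everything hinges: that for $F:\cA\times\cB\to\cC$ exact with respect to the \emph{product} proto-exact structure of Lemma~\ref{Lem:finlim}, each partial functor $F(a,-)$ must itself be exact, and in particular must send $\cN_\cB$ into $\cN_\cC$ for \emph{every} $a$ rather than only for null $a$. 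The paper's proof simply asserts this implication, so your diagnosis of where the work lies is sharper than the published argument.

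However, the mechanism you propose for closing that gap cannot succeed, and in fact the step fails as stated. Condition~\ref{Cond:init} provides, for \emph{every} object $c$ of $\cC$, an essentially unique admissible mono $z\to c$ and admissible epi $c\to z'$ with $z,z'$ null; consequently, knowing that $F(a,w)$ receives an admissible mono from the null object $F(z_a,w)$ imposes no constraint at all, and the stability axioms \ref{Cond:monostab}--\ref{Cond:epistab} together with preservation of bicartesian squares add nothing, because the only bicartesian squares available in $\cA\times\cB$ are those with \emph{both} coordinates bicartesian. Concretely, for an abelian category $A$ the functor $\oplus:N(A)\times N(A)\to N(A)$ preserves $\cN\times\cN$, $\cM\times\cM$ and $\cE\times\cE$ and sends pairs of bicartesian squares to bicartesian squares, hence is exact out of the product; yet $V\oplus-$ fails to preserve zero objects whenever $V\neq 0$, so its transpose does not land in $\cFun^\mathrm{ex}(N(A),N(A))$. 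Thus the currying map does not restrict to a bijection between $\cMap^\mathrm{ex}(\cA\times\cB,\cC)$ and $\cMap^\mathrm{ex}(\cB,\cFun^\mathrm{ex}(\cA,\cC))$ without further hypotheses or a change of definitions. This is a genuine gap in your proposal, and it afflicts the paper's own proof at exactly the same step; note that the parts of your plan concerning admissible mono/epimorphisms (via the factorisation $(\alpha,\beta)=(\id,\beta)\circ(\alpha,\id)$) and bicartesian squares (via pairing with constant squares) do go through --- it is only the null objects that break.
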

\begin{proof}
 Recall that the $\infty$-category $\capExact$ is the simplicial nerve of $\apExact$. It therefore suffices to show that $\cFun^\mathrm{ex}(\cA,-)$ is right adjoint to $\cA\times -$ as functors of simplicially enriched categories (\cite{HTT} 5.2.4.5). That is, we must show that for every triple of augmented proto-exact $\infty$-categories $\cA$, $\cA'$ and $\cA''$ one has a natural isomorphism of simplicial sets
 \begin{equation*}
  \cMap^\mathrm{ex}\left( \cA \times \cA', \cA'' \right) \simeq \cMap^\mathrm{ex} \left( \cA, \cFun^\mathrm{ex}( \cA', \cA'')\right).
 \end{equation*}
 
Forgetting the augmented proto-exact $\infty$-structure one has a natural isomorphism of simplicial sets
 \begin{equation*}
   \xymatrix{ \cFun \left( \cA \times \cA', \cA''\right) \ar[r]^-\Psi_-\simeq & \cFun\left (\cA, \cFun(\cA',\cA'')\right)},
 \end{equation*}
which sends a functor $F:[n]\times\cA\times\cA' \to \cA''$ to the functor 
\begin{equation*}
\Psi F: [n] \times \cA \to \cFun(\cA',\cA'') \quad \Psi F(i,a): a' \mapsto F(i,a,a').
\end{equation*}
If one assumes that $F(i,-,-): \cA \times \cA' \to \cA''$ is exact for each $i \in [n]$ then for each $(i,a) \in [n] \times \cA$, $\Psi F(i,a): \cA' \to \cA''$ is an exact functor and moreover $\Psi F(i,-): \cA \to \cFun^\mathrm{ex}(\cA',\cA'')$ is exact. Therefore the map $\Psi$ descends to a map
\begin{equation*}
 \xymatrix{ \cFun^\mathrm{ex}\left( \cA \times \cA', \cA'' \right) \ar[r]^-\Psi & \cFun^\mathrm{ex} \left( \cA, \cFun^\mathrm{ex}( \cA', \cA'')\right)}
\end{equation*}
which is readily verified to be an isomorphism.
\end{proof}

We can now finally introduce {\em Waldhausen's $S$-construction} \cite{Waldhausen} for augmented proto-exact $\infty$-categories:
\begin{equation}
 \label{Defn:Wald}
 \xymatrixrowsep{.3pc} \xymatrixcolsep{1.8pc} \xymatrix { \capExact \ar[r]^-{S_\bullet -} & \left(\capExact\right)_\Simp \\
 \cA \ar@{|->}[r] & \cFun^\mathrm{ex}(\cArr{\bullet}, \cA) }
 \end{equation}
Said another way, the augmented proto-exact $\infty$-category $S_n \cA$ consists of diagrams
\begin{equation*}
   \xymatrixrowsep{.9pc} \xymatrix{a_{00} \ar[r] & a_{01} \ar[r] \ar[d] & a_{02} \ar[r] \ar[d]& \cdots \ar[r] & a_{0n} \ar[d] \\
     & a_{11} \ar[r] & a_{12} \ar[r]\ar[d] & \cdots \ar[r] & a_{1n} \ar[d] \\
      & & a_{22} \ar[r] & \cdots \ar[r]& a_{2n} \ar[d] \\
      & & &\ddots &\vdots \ar[d] \\
      & & & & a_{nn} }
\end{equation*}
such that the objects $a_{ii}$ are in $\cN$, each horizontal morphism is in $\cM$, each vertical morphism is in $\cE$ and each square is bicartesian. 

\begin{prop}
 \label{Prop:Wald2Seg}
 The $S$-construction $S_\bullet \cA$ of an augmented proto-exact $\infty$-category is a $2$-Segal object in $\capExact$.
\end{prop}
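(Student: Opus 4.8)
The plan is to verify the reformulation of the $2$-Segal condition recalled above, namely that $S_\bullet\cA$ sends active-inert pushout squares in $\Simp$ to pullback squares in $\capExact$ (\cite{KockI} 3.1). By Lemma \ref{Lem:finlim} the inclusion $\capExact \hookrightarrow \Catoo^\Box$ preserves limits and $\capExact$ is complete, so limits in $\capExact$ are computed vertex-wise in $\Catoo$; it therefore suffices, for each such square, to produce an equivalence of augmented proto-exact $\infty$-categories between $S_n\cA$ and the corresponding pullback, since matching the underlying $\infty$-categories together with the subcategories $\cN,\cM,\cE$ exhibits the pullback in $\capExact$. Following Dyckerhoff--Kapranov (\cite{DK12} 8) it is enough to treat the generating squares, the single-diagonal cuts of the $(n{+}1)$-gon, because the general membrane condition factors through successive elementary cuts and pullbacks paste; the remaining active-inert pushouts are the unitality squares, which I dispatch separately. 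Concretely the two families of generating cuts demand that restriction induces equivalences
\begin{equation*}
 S_n \cA \xrightarrow{\ \sim\ } S_j\cA \times_{S_1\cA} S_{n-j+1}\cA,
\end{equation*}
for the diagonals through the vertices $0$ and $j$, together with the dual family.

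Writing $S_m\cA = \cFun^\mathrm{ex}(\cArr{[m]}, \cA)$ and using that $\cArr{(-)}$ is a cosimplicial object (Example \ref{arrcosimp}), this cut realises the comparison map as restriction along the cover of the arrow poset $\cArr{[n]}$ by the two sub-triangles $\cArr{P}$ and $\cArr{R}$ attached to the sub-linear-orders $P=\{0,\dots,j\}$ and $R=\{0,j,j+1,\dots,n\}$, which overlap along the shared edge $\cArr{Q}$ with $Q=\{0,j\}$. The essential subtlety is that $\cArr{P}\cup\cArr{R}$ is a \emph{proper} subcategory of $\cArr{[n]}$: the off-diagonal entries $a_{ab}$ with $1\le a\le j-1$ and $j+1\le b\le n$ lie in neither piece. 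Hence $\cArr{[n]}$ is \emph{not} the pushout $\cArr{P}\amalg_{\cArr{Q}}\cArr{R}$, and the equivalence cannot be obtained formally from the cartesian-closed structure of Lemma \ref{Lem:closed}; the off-diagonal data must be reconstructed from the exactness axioms, which is where the content lies.

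The heart of the argument, and the main obstacle, is constructing the inverse by this reconstruction. The key observation is that an exact diagram on $\cArr{[m]}$ is the essentially unique bicartesian extension of its top row: every entry $a_{ab}$ is recovered as the pushout of the admissible monomorphism $a_{0a}\to a_{0b}$ along the admissible epimorphism $a_{0a}\to a_{aa}$ to a null object, using Conditions \ref{Cond:monostab} and \ref{Cond:bicart}, with the null data supplied essentially uniquely by Condition \ref{Cond:init}. Under the cut, the cover $\cArr{P}\cup\cArr{R}$ already determines the whole top row (the flag $a_{0,0}\to\cdots\to a_{0,n}$, split at $a_{0,j}$), so given a compatible pair of exact diagrams on $\cArr{P}$ and $\cArr{R}$ I would extend by this bicartesian completion and check, via the pasting law for bicartesian squares, that the result is again exact. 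Essential uniqueness of the completion makes this extension inverse to restriction and simultaneously matches the subcategories $\cN,\cM,\cE$, giving the required equivalence in $\capExact$. The genuinely new wrinkle relative to the pointed (kernel/cokernel) case of Dyckerhoff--Kapranov is that $\cN$ may have many components, so the completing pushouts are no longer canonical cokernels; their existence and essential uniqueness must be read off from the augmented axioms directly.

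Finally, the dual family of diagonal cuts is handled by the same reconstruction with the roles of Conditions \ref{Cond:monostab} and \ref{Cond:epistab} exchanged (completing by pullback of an admissible epimorphism along an admissible monomorphism), and the unitality squares of the decomposition-space condition reduce to the contractibility and (co)finality of the null objects guaranteed by Condition \ref{Cond:init}. Assembling these equivalences over all active-inert pushout squares shows that $S_\bullet\cA$ sends them to pullbacks in $\capExact$, which is precisely the $2$-Segal condition.
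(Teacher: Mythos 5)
Your proposal is correct in outline but follows a genuinely different route from the paper. You verify the G\'alvez-Carrillo--Kock--Tonks form of the condition (active-inert pushouts, i.e.\ the single-diagonal cuts $S_n\cA \to S_j\cA\times_{S_1\cA}S_{n-j+1}\cA$ plus unitality) directly, by reconstructing the full Waldhausen grid from the two sub-triangles. The paper instead invokes the path space criterion of Dyckerhoff--Kapranov: it shows $P^{\triangleleft}S\cA\simeq\cFun([\bullet],\cM)$ and $P^{\triangleright}S\cA\simeq\cFun([\bullet],\cE)$ by factoring $H_n\colon[n]\to\cArr{n+1}$ through the full subcategories $X_n$ (top row) and $Y_n$ (top row plus diagonal) and taking a composite $h_!g_*f_!$ of Kan extensions, then handles the unitality squares by a categorical-fibration argument deferred to (\cite{DK12}~7.3.3). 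The two arguments run on the same engine: your ``essentially unique bicartesian extension of the top row'' \emph{is} the paper's equivalence $S_{n+1}\cA\simeq\cFun([n],\cM)$, and under that identification your cut conditions become exactly the Segal conditions for $\cFun([\bullet],\cM)$. What the paper's packaging buys is coherence for free: in the $\infty$-categorical setting one cannot define the extension entry-by-entry as you describe (``every $a_{ab}$ is the pushout of $a_{0a}\to a_{0b}$ along $a_{0a}\to a_{aa}$'') and then separately check compatibility --- the left Kan extension $h_!$ together with the identification of its essential image is precisely the rigorous form of your ``extend by bicartesian completion and check exactness via pasting,'' and $g_*$ supplies your ``null data'' from Condition~\ref{Cond:init}. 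What your route buys is a more transparent account of where each axiom of Definition~\ref{Def:augexact} is used, and it correctly isolates the genuinely new feature of the augmented setting (non-contractible $\cN$, so completions are not canonical cokernels).

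Two points in your sketch would need to be firmed up to reach the paper's level of rigour. First, to read the comparison map as a pullback in $\Catoo$ you need the restriction $S_n\cA\to S_1\cA$ to be a categorical fibration (the paper gets this from $\cArr{\{i,i+1\}}\hookrightarrow\cArr{n}$ being a monomorphism of simplicial sets); without it the strict fibre product of quasi-categories does not compute the $\infty$-categorical pullback. Second, your one-sentence disposal of the unitality squares is too quick: one must actually show that an exact $\cArr{n}$-diagram whose $\{i,i+1\}$-edge lands in $\cN$ is equivalent to an exact $\cArr{n-1}$-diagram, which again uses essential uniqueness of null quotients together with the bicartesian axiom; this is the part the paper itself delegates to (\cite{DK12}~7.3.3).
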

We shall prove this using the {\em path space criterion} for $2$-Segal objects: a simplicial object $\cX$ is $2$-Segal if and only if the {\em initial and final path spaces}
\begin{equation*}
P^{\triangleleft}\cX_\bullet = \cX_{[0]\star[\bullet]} \quad  P^{\triangleright} \cX_\bullet = \cX_{[\bullet]\star[0]},
\end{equation*}
where $\star$ is the join of $\infty$-categories, are Segal objects and for each $n \geq 2$ and $0 \leq i < n$, the image of the square
\begin{equation}
\label{SegUn}
 \xymatrixcolsep{.8pc}\xymatrixrowsep{.8pc}\xymatrix{\Simplex{\{i,i+1\}} \ar[r] \ar[d] & \Simplex n \ar[d] \\
 \Simplex{\{i\}} \ar[r] & \Simplex{n-1}}
\end{equation}
under $\cX$ is a pullback in $\cC$ (\cite{DK12} 6.3.2).

Before giving a proof of Proposition \ref{Prop:Wald2Seg} we must introduce some notation. The functors
\begin{equation*}
 \xymatrixrowsep{.3pc} \xymatrixcolsep{1.5pc} \xymatrix{ [n] \ar[r]^-{H_n} & \cArr {n+1} &  & [n] \ar[r]^-{V_n} & \cArr{n+1} \\
 i \ar@{|->}[r] & [0;i+1] & & i \ar@{|->}[r] & [i;n+1]}
\end{equation*}
define natural transformations of functors $\Simp \to \Catoo$
\begin{equation*}
 H: [\bullet] \Rightarrow \cArr {[0] \star [\bullet]} \quad V: [\bullet] \Rightarrow \cArr{[\bullet] \star [0]}.
\end{equation*}
Pulling back along $H$ and $V$ induce morphisms
\begin{equation}
\label{PathSpCrit}
 \xymatrixcolsep{1.5pc} \xymatrix{ P^\triangleleft S\cA \ar[r]^-{H^*}  & \cFun([\bullet],\cM)  & P^\triangleright S \cA  \ar[r]^-{V^*}  & \cFun([\bullet],\cE)}.
\end{equation}
\begin{proof}[Proof of Prop. \ref{Prop:Wald2Seg}]
Let $S_\bullet := S_\bullet \cA$. The proof is only a slight generalisation of the proof of (\cite{DK12} 7.3.3). 

For each $n$ the morphism $H_n$ can be factored as a sequence of inclusions of full subcategories
\begin{equation*}
 \xymatrixcolsep{1.3pc} \xymatrix{ [n] \ar@{^{(}->}[r]^-{f} & X_n \ar@{^{(}->}[r]^-{g} & Y_n \ar@{^{(}->}[r]^-{h} & \cArr{n+1}}
\end{equation*}
where
\begin{equation*}
 X_n = \{ [0;j]\} \quad Y_n = \{[i;j] \, | \, i=0 \ \mathrm{or} \ i=j \}.
\end{equation*}
By Condition \ref{Cond:init} of Definition \ref{Def:augexact} the essential image of 
\begin{equation*}
\xymatrixcolsep{1.8pc} \xymatrix{\cFun([n], \cM) \ar[r]^-{\iota_\cM\circ -} & \cFun([n], \cA) \ar[r]^-{f_!} & \cFun(X_n, \cA)},
\end{equation*}
where $f_!$ is the left Kan extension along $f$, consists of those functors $F$ with $F(0,0) \in \cN$. Similarly, the essential image of $g_*$, the right Kan extension along $g$, consists of functors satisfying $F(i,i) \in \cN$ and the essential image of $h_!$ functors preserving bicartesian squares of the form Eq.~\ref{APEsquare}. Taken together, this shows that the composite
\begin{equation*}
 h_! g_* f_! (\iota_\cM \circ -): \cFun([n], \cM) \to \cFun^\mathrm{ex}(\cArr{n+1}, \cA)
\end{equation*}
is an equivalence of categories and hence that the first morphism of Eq.~\ref{PathSpCrit} is an equivalence. Dually, one has that the second morphism is an equivalence. Since $\cFun([\bullet],\cM)$ and $\cFun([\bullet],\cE)$ are Segal objects this shows that $P^\triangleleft S$ and $P^\triangleright S$ are Segal objects.

It remains to show that image of the square in Eq.~\ref{SegUn} under $S$,
\begin{equation*}
 \xymatrixcolsep{1.1pc} \xymatrixrowsep{.8pc} \xymatrix{ S_{n-1} \ar[r] \ar[d] & S_n \ar[d] \\
 S_{\{i\}} \ar[r] & S_{\{i,i+1\}} }
\end{equation*}
is a pullback square. Since $\cArr{\{i,i+1\}} \to \cArr{n}$ is a monomorphism of simplicial sets the functor $\cFun(\cArr n , \cA) \to \cFun(\cArr{\{i,i+1\}}, \cA)$ is a categorical fibration (\cite{joyal2008theory} 5.13). Since the conditions defining an exact functor are stable under equivalence this implies that the functor $S_n \to S_{\{i,i+1\}}$ is a categorical fibration. Therefore to show the above square is a pullback it suffices so show the induced map
\begin{equation*}
 S_{n-1} \to S_n \times_{S_{\{i,i+1\}}} S_{\{i\}}
\end{equation*}
is an equivalence, where the target is the {\em ordinary} pullback of simplicial sets. The proof of this is identical to the proof given in (\cite{DK12} 7.3.3). 
\end{proof}

By Lemma \ref{Lem:finlim} the composite functor
\begin{equation*}
 \xymatrixrowsep{.3pc} \xymatrix{\capExact \ar[r]^-{\mathrm{forget}} & \Catoo \ar[r]^-{(-)^\simeq} & \cS}
\end{equation*}
preserves limits. Hence an immediate corollary of Proposition \ref{Prop:Wald2Seg} is the following:
\begin{cor}
 For each $\cA \in \capExact$, the simplicial space $[n] \mapsto \cMap^\mathrm{ex}(\cArr n, \cA)$ is a $2$-Segal space. 
\end{cor}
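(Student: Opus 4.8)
The plan is to observe that the simplicial space in the statement is precisely the image of the $2$-Segal object $S_\bullet\cA$ under the limit-preserving functor of the preceding display, and that the $2$-Segal condition is stable under such functors because it is expressed entirely through the preservation of finite limits. For the first point, note that for each $[n]$ the largest Kan complex inside $S_n\cA=\cFun^\mathrm{ex}(\cArr n,\cA)$ is by definition $\cMap^\mathrm{ex}(\cArr n,\cA)$, and this identification is natural in $[n]$. Hence the simplicial space $[n]\mapsto\cMap^\mathrm{ex}(\cArr n,\cA)$ is the composite
\[
\Simpop \xrightarrow{S_\bullet\cA} \capExact \xrightarrow{\mathrm{forget}} \Catoo \xrightarrow{(-)^\simeq} \cS .
\]
By Proposition \ref{Prop:Wald2Seg}, $S_\bullet\cA$ is a $2$-Segal object in $\capExact$.

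Next I would invoke the reformulation of the $2$-Segal condition recalled in Section \ref{Sec:Rev2Seg}: a simplicial object is $2$-Segal exactly when it sends every active-inert pushout square in $\Simp$ to a pullback square in the target. Since this is a condition purely about the preservation of finite limits, it is inherited along any finite-limit-preserving functor: if $\cX\colon\Simpop\to\cC$ is $2$-Segal and $G\colon\cC\to\cD$ preserves pullbacks, then $G\circ\cX$ sends the same squares to pullbacks and is therefore $2$-Segal. Applying this with $\cX=S_\bullet\cA$ and $G=(-)^\simeq\circ\mathrm{forget}$, which preserves limits by Lemma \ref{Lem:finlim}, yields that the composite above is a $2$-Segal space, as required.

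There is no substantial obstacle here; the statement is formal once the two ingredients are in place. The only points that genuinely require care are the naturality of the identification $(S_n\cA)^\simeq\simeq\cMap^\mathrm{ex}(\cArr n,\cA)$ in the simplicial variable, so that one really recovers the simplicial space of the statement rather than a level-wise equivalent one, and the fact that the active-inert formulation of the $2$-Segal condition involves only finite limits, so that its preservation under $G$ follows from Lemma \ref{Lem:finlim} alone. Alternatively, one could bypass the active-inert formulation and transport the path space criterion used in the proof of Proposition \ref{Prop:Wald2Seg} directly along $(-)^\simeq\circ\mathrm{forget}$, but the limit-preservation argument is cleaner.
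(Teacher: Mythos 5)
Your argument is correct and is essentially the paper's own: the corollary is deduced by composing the $2$-Segal object $S_\bullet\cA$ with the limit-preserving functor $(-)^\simeq\circ\mathrm{forget}$, whose limit-preservation the paper likewise attributes to Lemma \ref{Lem:finlim} (together with $(-)^\simeq$ being a right adjoint). Your added care about the naturality of the identification $(S_n\cA)^\simeq\simeq\cMap^\mathrm{ex}(\cArr{n},\cA)$ and the active-inert reformulation only makes explicit what the paper leaves as ``immediate.''
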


Lemma \ref{Lem:closed} implies that $S: \capExact \to (\capExact)_\Simp$ preserves limits. In particular, applying $S$ object-wise to a $2$-Segal object in $\capExact$ yields a $2$-Segal object in $(\capExact)_\Simp$, i.e., according to Definition \ref{Defn:double2Seg}, a double $2$-Segal object in $\capExact$. This leads to our two main examples of double $2$-Segal spaces.

The first example is the {\em iterated $S$-construction} of an augmented proto-exact $\infty$-category $\cA$ : The bisimplicial object $S_{\bullet,\bullet} \cA$ in $\capExact$ obtained by applying the $S$-construction object-wise to the $2$-Segal object $S_\bullet \cA$. Explicitly, one defines the iterated $S$-construction to be
\begin{equation}
\label{Defn:IterS}
 S_{\bullet,\bullet}\cA: (\Simpop)^2 \to \capExact, \quad ([n],[k]) \mapsto \cFun^\mathrm{ex}\left(\cArr{n,k}, \cA\right),
\end{equation}
where $\cArr{n,k} = \cArr{[n]\times[k]}$.
\begin{cor}
 \label{Cor:IterS}
 For each $\cA \in \capExact$, the iterated Waldhausen $S$-construction $S_{\bullet,\bullet}\cA$ is a double $2$-Segal object in $\capExact$. Moreover, the bisimplicial space $\cMap^\mathrm{ex}(\cArr{\bullet,bullet},\cA)$ is a double $2$-Segal space.
\end{cor}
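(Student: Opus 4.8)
The plan is to deduce both statements from Proposition \ref{Prop:Wald2Seg} together with the observation that all the functors in sight preserve finite limits, exploiting that the $2$-Segal condition is itself a finite-limit condition.

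First I would record the structural fact underlying the whole argument: a finite-limit-preserving functor $G: \cC \to \cD$ between $\infty$-categories with finite limits sends $2$-Segal objects to $2$-Segal objects. Using the reformulation of the $2$-Segal condition as the requirement that a simplicial object carry active--inert pushout squares in $\Simp$ to pullback squares in the target (\cite{KockI} 3.1), this is immediate: $G$ preserves pullbacks, so $G \circ \cX$ sends these squares to pullbacks whenever $\cX$ does. Consequently $G$ restricts to a functor $\ctSeg\cC \to \ctSeg\cD$, and since finite limits in $\ctSeg\cC$ and $\ctSeg\cD$ are computed object-wise this restriction again preserves finite limits; hence it preserves $2$-Segal objects in the respective categories of $2$-Segal objects, that is, it preserves double $2$-Segal objects.

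Next I would apply this to $S$. By Proposition \ref{Prop:Wald2Seg} the simplicial object $S_\bullet\cA$ lies in $\ctSeg\capExact$. By Lemma \ref{Lem:closed} each functor $S_n = \cFun^\mathrm{ex}(\cArr n, -)$ is a right adjoint, so it preserves limits; since limits in $(\capExact)_\Simp$ are computed object-wise, the functor $S: \capExact \to (\capExact)_\Simp$ preserves limits. Post-composing $S_\bullet\cA: \Simpop \to \capExact$ with $S$ then gives a simplicial object $\Simpop \to (\capExact)_\Simp$ whose value at $[k]$ is $S_\bullet(S_k\cA)$, which is $2$-Segal by Proposition \ref{Prop:Wald2Seg} applied to the augmented proto-exact $\infty$-category $S_k\cA$. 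Thus the composite factors through $\ctSeg\capExact$, and by the structural fact (with $G = S$, whose image I then reflect into the full subcategory $\ctSeg\capExact$) it is a $2$-Segal object there, that is, a double $2$-Segal object in the sense of Definition \ref{Defn:double2Seg}. To identify this with the bisimplicial object of Equation \ref{Defn:IterS} I would invoke cartesian closedness (Lemma \ref{Lem:closed}) in its internal-hom (currying) form $\cFun^\mathrm{ex}(\cArr n, \cFun^\mathrm{ex}(\cArr k, \cA)) \simeq \cFun^\mathrm{ex}(\cArr n \times \cArr k, \cA)$ together with the identification $\cArr n \times \cArr k \simeq \cArr{n,k}$ of augmented proto-exact $\infty$-categories, the latter because $\cFun([1], -)$ carries products to products and the product structure of Lemma \ref{Lem:finlim} agrees component-wise with the one on $\cArr{n,k}$.

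Finally, for the statement about spaces I would post-compose with $U = (-)^\simeq \circ \mathrm{forget}: \capExact \to \cS$, which preserves limits by Lemma \ref{Lem:finlim} as recorded above. Since $U \circ S_{\bullet,\bullet}\cA = \cMap^\mathrm{ex}(\cArr{\bullet,\bullet}, \cA)$ and $U$ preserves finite limits, the structural fact applied to $U$ shows this bisimplicial space is again double $2$-Segal. I expect the only point needing genuine care to be the bookkeeping translating Definition \ref{Defn:double2Seg} into the two families of $2$-Segal slices and, relatedly, the naturality of the currying equivalence $S_{n,k}\cA \simeq S_n(S_k\cA)$ in both simplicial variables; the substantive $2$-Segal input is supplied entirely by Proposition \ref{Prop:Wald2Seg}, so no further analysis of pushouts or path spaces is required.
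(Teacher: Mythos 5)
Your proposal is correct and follows essentially the same route as the paper: the paper likewise derives the corollary from Proposition \ref{Prop:Wald2Seg} together with the facts that $S$ preserves limits (Lemma \ref{Lem:closed}) and that $(-)^\simeq \circ \mathrm{forget}$ preserves limits (Lemma \ref{Lem:finlim}), the $2$-Segal condition being a finite-limit condition. The only addition is that you spell out the currying identification $S_n(S_k\cA) \simeq \cFun^\mathrm{ex}(\cArr{n,k},\cA)$, which the paper leaves implicit in Eq.~\ref{Defn:IterS}.
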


The second example is the {\em monoidal $S$-construction} of a monoidal augmented proto-exact $\infty$-category. 
\begin{defn}
\label{Defn:Monexact}
A {\em monoidal augmented proto-exact $\infty$-category} is a simplicial augmented proto-exact $\infty$-category $\cA^\otimes_\bullet \in (\capExact)_\Simp$ satisfying the Segal conditions and having $\cA^\otimes_0 \simeq \ast$.
\end{defn}
\begin{exam}
 Let $A$ be a proto-exact category equipped with a monoidal structure such that the product $\otimes: A \times A \to A$ and unit $\eta: \ast \to A$ are exact. Then the nerve of $A$ is a monoidal augmented proto-exact $\infty$-category. 
\end{exam}
\begin{exam}
 Every exact $\infty$-category equipped with the coproduct monoidal structure is a monoidal augmented proto-exact $\infty$-category.
\end{exam}
The monoidal $S$-construction of $\cA^\otimes_\bullet$, denoted $S^\otimes_{\bullet,\bullet}\cA$, is defined to be the bisimplicial object in $\capExact$ obtained by applying the $S$-construction object-wise to the Segal object $\cA^\otimes_\bullet$. Explicitly,
\begin{equation}
 \label{Defn:MonS}
 S^\otimes_{\bullet,\bullet} \cA: (\Simpop)^2 \to \capExact, \quad ([n],[k]) \mapsto \cFun^\mathrm{ex}\left(\cArr n, \cA^\otimes_k \right).
\end{equation}
\begin{cor}
 \label{Cor:MonS}
 For each monoidal augmented proto-exact $\infty$-category $\cA^\otimes_\bullet$, the monoidal $S$-construction is a double $2$-Segal object in $\capExact$. Moreover, the bisimplicial space $\cMap^\mathrm{ex}(\cArr{\bullet}, \cA^\otimes_\bullet)$ is a double $2$-Segal space.
\end{cor}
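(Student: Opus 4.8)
The plan is to deduce both assertions from the general principle, recorded just before the statement, that the $S$-construction preserves limits and therefore carries a $2$-Segal object of $\capExact$ to a double $2$-Segal object. The argument is almost identical to the one for Corollary \ref{Cor:IterS}; the single additional observation needed is that a monoidal augmented proto-exact $\infty$-category $\cA^\otimes_\bullet$, which by Definition \ref{Defn:Monexact} is only assumed to satisfy the ordinary Segal conditions, is in particular a $2$-Segal object of $\capExact$. This is the standard fact that every (one-dimensional) Segal object is $2$-Segal (\cite{DK12}), valid here since $\capExact$ has finite limits by Lemma \ref{Lem:finlim}, so that $\cA^\otimes_\bullet$ defines an object of $\ctSeg{\capExact}$.

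First I would establish that $S\colon \capExact \to (\capExact)_\Simp$ preserves finite limits as a functor landing in $\ctSeg{\capExact}$. Each $S_n = \cFun^\mathrm{ex}(\cArr n, -)$ is a right adjoint by Lemma \ref{Lem:closed}, hence preserves limits, and since limits in $(\capExact)_\Simp$ are computed object-wise the same holds for $S$. By Proposition \ref{Prop:Wald2Seg} the functor $S$ factors through the full subcategory $\ctSeg{\capExact}$, whose limits are likewise computed object-wise in $\capExact$; therefore $S\colon \capExact \to \ctSeg{\capExact}$ preserves finite limits. Since the $2$-Segal condition may be phrased as sending active-inert pushout squares in $\Simp$ to pullback squares, postcomposing the $2$-Segal object $\cA^\otimes_\bullet$ with this limit-preserving $S$ yields a $2$-Segal object of $\ctSeg{\capExact}$, that is, a double $2$-Segal object in $\capExact$. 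By Eq.~\ref{Defn:MonS} this bisimplicial object is precisely $S^\otimes_{\bullet,\bullet}\cA$: for fixed $k$ its $n$-direction is $S_\bullet(\cA^\otimes_k)$, which is $2$-Segal by Proposition \ref{Prop:Wald2Seg} applied to the augmented proto-exact $\infty$-category $\cA^\otimes_k$, while its $k$-direction is $2$-Segal by the postcomposition argument just given.

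For the second assertion I would transport along the composite $\capExact \xrightarrow{\mathrm{forget}} \Catoo \xrightarrow{(-)^\simeq} \cS$, which preserves limits by Lemma \ref{Lem:finlim} and sends $\cFun^\mathrm{ex}(\cArr n, \cA^\otimes_k)$ to the mapping space $\cMap^\mathrm{ex}(\cArr n, \cA^\otimes_k)$. Being limit-preserving, it again carries active-inert pushouts to pullbacks object-wise, hence sends the double $2$-Segal object $S^\otimes_{\bullet,\bullet}\cA$ to a double $2$-Segal object of $\cS$, namely $\cMap^\mathrm{ex}(\cArr \bullet, \cA^\otimes_\bullet)$, which is a double $2$-Segal space. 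The only point demanding care, and the sole departure from the purely formal pattern of Corollary \ref{Cor:IterS}, is the asymmetry in how the two simplicial directions acquire their $2$-Segal property: the $S$-direction does so directly from Proposition \ref{Prop:Wald2Seg}, whereas the monoidal direction does so only after recognizing $\cA^\otimes_\bullet$ as a $2$-Segal object and invoking the limit-preservation of $S$.
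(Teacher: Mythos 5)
Your proposal is correct and follows essentially the same route as the paper: the paper's entire proof is the one-line observation that every Segal object is $2$-Segal (citing \cite{DK12} and \cite{KockI}), so that $\cA^\otimes_\bullet$ is a $2$-Segal object of $\capExact$, with the rest resting on the paragraph preceding Corollary \ref{Cor:IterS} about $S$ preserving limits by Lemma \ref{Lem:closed}. You have simply spelled out in more detail the limit-preservation and transport-to-spaces steps that the paper leaves implicit.
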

\begin{proof}
 Since every Segal object is also a $2$-Segal object (\cite{DK12} 2.5.3, \cite{KockI} 3.5), the monoidal augmented proto-exact $\infty$-category $\cA^\otimes_\bullet$ is a $2$-Segal object in $\capExact$. 
 \end{proof}

\section{Bisimplicial objects define totally lax bialgebras}
\label{Sec:totlax}

As the first step in our construction of the universal Hall bialgebra of a double $2$-Segal space, in this section we shall construct, from a bisimplicial object $\cX \in \cC_{\Simp^2}$ in an $\infty$-category $\cC$ having finite limits, a totally lax bialgebra structure on $\cX_{1,1}$ as an object of $\csmtSpan{\cC}$. That is, we shall construct a symmetric monoidal lax functor $\beta_{\cX}:\csmBialg \rightsquigarrow \csmtSpan{\cC}$ with $\beta_{\cX}(\underline{1}) = \cX_{1,1}$ as per Definition \ref{DefnLaxBialg}. As we show in Section \ref{Sec:SegbiAssoc}, when $\cX$ is a double $2$-Segal space $\beta_\cX$ induces an algebra and a coalgebra structure on $\cX_{1,1}$. The resulting lax bialgebra is the universal Hall bialgebra of $\cX$.

To carry out this construction we exploit the fact that the opposite of the Yoneda embedding,
\begin{equation*}
 \Simplex{\bullet,\bullet}: (\Simpop)^2 \to \ctSetfop,
\end{equation*}
where $\ctSetfop$ is the $\infty$-category of level-wise finite bisimplicial sets, is the {\em initial bisimplicial object} in an $\infty$-category having finite limits: From any bisimplicial object $\cX \in \cC_{\Simp^2}$ in an $\infty$-category having finite limits one has a finite limit preserving functor $\cX:\ctSetfop \to \cC$ given by right Kan extension,
\begin{equation*}
 \xymatrixrowsep{1.1pc} \xymatrix{ \ctSetfop \ar[r]^-\cX & \cC \\
 (\Simpop)^2 \ar@{^{(}->}[u] \ar[ru]_-\cX & }
\end{equation*}
The bisimplicial object $\cX \in \cC_{\Simp^2}$ is then the image of $\Simplex{\bullet,\bullet}$ under the right Kan extension. 

We first, in Section \ref{Sec:bialgcomb}, explicitly construct a symmetric monoidal lax functor
\begin{equation*}
 \xymatrix{\csmBialg \ar@{~>}[r]^-\beta & \csmtSpan{\ctSetfop}}, 
\end{equation*}
endowing the standard $(1,1)$-simplex $\Simplex{1,1}$ with the structure of a totally lax bialgebra. Then, in Section \ref{Sec:intospacesBi}, we show how the object $\cX_{1,1}$ in a bisimplicial object $\cX$ inherits a totally lax bialgebra structure from the universal property of $\Simplex{\bullet,\bullet}$.

In \cite{penney2017simplicial}, we constructed a symmetric monoidal lax functor $\alpha: \csmAlg \rightsquigarrow \csmtSpan{\csSetfop}$ endowing $\Simplex 1$ with the structure of a lax algebra and dually, a symmetric monoidal lax functor $\chi: \csmCoalg \rightsquigarrow \csmtSpan{\csSetfop}$ endowing $\Simplex 1$ with a lax coalgebra structure. By right Kan extension along the functors 
\begin{equation*}
\Simplex{\bullet,1}, \ \Simplex{1,\bullet}: \Simpop \to \ctSetfop
\end{equation*}
one obtains, respectively, finite limit preserving functors
 \begin{equation*}
  (-)^h, (-)^v: \csSetfop \to \ctSetfop, \quad X^h:= X \boxtimes \Simplex 1, X^v  = \Simplex 1 \boxtimes X.
 \end{equation*}
 The symmetric monoidal lax functor $\beta$ will be an extension of both $\alpha$ and $\chi$ in the sense that one has a commutative diagram
 \begin{equation*}
  \xymatrixrowsep{.9pc} \xymatrix{\csmAlg\ar@{~>}[r]^-{\alpha} \ar[d]_-{\iota_a} & \csmtSpan{\csSetfop} \ar[d]^-{(-)^h} \\
  \csmBialg \ar@{~>}[r]_-{\beta} & \csmtSpan{\ctSetfop} \\
  \csmCoalg \ar[u]^-{\iota_c} \ar@{~>}[r]_-\chi  & \csmtSpan{\csSetfop} \ar[u]_-{(-)^v} }
 \end{equation*}

 \paragraph{Informal description of the symmetric monoidal lax functor $\beta$.} On objects, $\beta$ is given by
 \begin{equation*}
  \beta: X \mapsto X\cdot \Simplex{1,1} = \coprod_{x \in X} \Simplex{1,1}.
 \end{equation*}
The image under $\beta$ of a span $f:\xymatrixcolsep{1.2pc} \xymatrix{ X_0 & \ar[l]_-p Y \ar[r]^-q & X_1}$ is the span in $\ctSetfop$ given by the outer edges of the diagram
\begin{equation}
\label{betainfone}
 \xymatrixrowsep{.7pc} \xymatrixcolsep{1.2pc} \xymatrix{ & & \beta(f) & & \\
 & \alpha(p)^v \ar[ru] & & \alpha(q)^h \ar[lu] & \\
 \beta(X_0)\ar[ru] & &\beta(Y)\ar[ru] \ar[lu] & & \beta(X_1) \, \ar[lu] }
\end{equation}
where the middle square is a pushout.

The lax structure for $\beta$ is considerably more difficult to describe. Let us look at some special cases. First, consider a pair of composable morphisms in the image of $\csmAlg$, that is, a composite of the form
\begin{equation*}
 \xymatrixrowsep{.7pc} \xymatrixcolsep{1.2pc} \xymatrix { & & X_0  \ar@{=}[ld] \ar[rd]^-{p_1} & & \\
 & X_0 \ar@{=}[ld] \ar[rd]^-{p_1} & & X_1 \ar@{=}[ld] \ar[rd]_-{p_2} & \\
 X_0 & & X_1 & & X_2}
\end{equation*}
As we wish for $\beta$ to extend $\alpha$, the component of $\beta$'s lax structure for this composite will be the diagram
\begin{equation*}
 \xymatrixrowsep{.7pc}\xymatrixcolsep{1.2pc}\xymatrix{ & \alpha(p_2p_1)^h \ar@{=}[d] & \\
 \beta(X_0) \ar[ru] \ar[rd] \ar[r] & \alpha(p_2p_1)^h & \beta(X_2) \, . \ar[lu] \ar[ld] \ar[l] & \\
  & \alpha(p_2)^h \coprod_{\beta(X_1)} \alpha(p_1)^h \ar[u] & }
\end{equation*}
For a composite in the image of $\csmCoalg$ one does the same, replacing $(-)^h$ for $(-)^v$.

Next, consider a pair of composable morphisms $p$ and $q$ given by the composite
\begin{equation*}
 \xymatrixrowsep{.7pc} \xymatrixcolsep{1.2pc} \xymatrix { & & X_0 \times_{X_1} X_2 \ar[ld]_-{r_0} \ar[rd]^-{r_1} & & \\
 & X_0 \ar@{=}[ld] \ar[rd]^-{p} & & X_2 \ar@{=}[rd] \ar[ld]_-{q} & \\
 X_0 & & X_1 & & X_2}
\end{equation*}
Then the component of $\beta$'s lax structure for this composite will be a diagram
\begin{equation*}
 \xymatrixrowsep{.7pc} \xymatrixcolsep{1.2pc} \xymatrix{ & \beta(q \circ p) \ar[d] & \\
 \beta(X_0) \ar[ru] \ar[r] \ar[rd] & \fB_{q,p} & \beta(X_2)\, , \ar[lu] \ar[l] \ar[ld] \\
 & \beta(q) \coprod_{\beta(X_1)} \beta(p) \ar[u] & }
\end{equation*}
where $\fB_{q,p}$ is the bisimplicial set 
\begin{equation*}
 \fB_{q,p} = \coprod_{x \in X_1} \Simplex{|p^{-1}(x)|, |q^{-1}(x)|}.
\end{equation*}

\begin{exam}
 For the composite
 \begin{equation*}
 \xymatrixrowsep{.7pc} \xymatrixcolsep{1.2pc} \xymatrix { & & \underline{4} \ar[ld]_-{m^2} \ar[rd]^-{\overline{m}^2} & & \\
 & \underline{2} \ar@{=}[ld] \ar[rd]^-{m} & & \underline{2} \ar@{=}[rd] \ar[ld]_-{m} & \\
 \underline{2} & & \underline{1} & & \underline{2}}
\end{equation*}
the lax structure of $\beta$ is the bisimplicial set $\Simplex{2,2}$, which is exactly the witness for the lax compatibility of the product and coproduct in the universal Hall algebra of a proto-exact category that we described in Eq.~\ref{laxbialginf}.
\end{exam}

For a general composition, the component of the lax structure will be given as a certain colimit of the special cases considered above. Most of the difficulty in giving the rigorous construction of $\beta$ comes from the need to give a coherent parametrisation of these colimit diagrams for all strings of composable morphisms in $\csmBialg$.

\paragraph{Outline of the construction.} According to Definitions \ref{DefnsmLax} and \ref{DefnLaxBialg}, a totally lax bialgebra structure on $\Simplex{1,1}$ as an object of $\csmtSpan{\ctSetfop}$ is a functor
 \begin{equation*}
  \xymatrixrowsep{.8pc}\xymatrixcolsep{.9pc}\xymatrix{\Un(\csmBialg)\ar[rr]^-{\beta} \ar[rd] & & \csmtSpan{\ctSetfop} \ar[ld] \\
  & \cFinDel & }
 \end{equation*}
which preserves cocartesian lifts of morphisms in $\cFin\times\Simpin^{\rm op}$. Furthermore, the image of the object $\underline{1}$ in the fibre over $(\underline{1}_\ast,[0])$ must be $\Simplex {1,1}$.

One can readily see that $\Un(\csmBialg)$ is the nerve of the ordinary Grothendieck construction of the functor
 \begin{equation*}
  \FinDel \to \Cat, \quad (S_*,[n]) \mapsto \Bialg_{S,n}.
 \end{equation*}
That is, an element of $\Un(\csmBialg)_k$ is a triple $((f,\phi), \theta, \gamma)$, where:
\begin{itemize}
 \item  $(f,\phi)$ is an element of  $N_k(\FinDel)$;
 \item $\theta$ is a family of functors
 \begin{equation*}
 \theta_i:\Cart{f(i)}\times\Pyrnc{\phi(i)} \to \Alg \in \Bialg_{f(i),\phi(i)}, \quad i \in [k];
 \end{equation*}
 
 \item $\gamma$ is a family of natural isomorphisms 
 \begin{equation*}
 \gamma_i:\xymatrixcolsep{1.1pc} \xymatrix{(f_i,\phi_i)_*\theta_i\ar@{=>}[r]^-\sim &  \theta_{i+1}}, \quad i = 0, \ldots, k-1.
 \end{equation*}
\end{itemize}

From such data we will build a cartesian, vertically constant functor
\begin{equation*}
 \beta\left( \theta, \gamma\right): \Cartop{f(0)} \times \Pyr{M_\phi} \to \tSetfop.
\end{equation*}
By Proposition \ref{UnstrSimps} this defines a $k$-simplex in the unstraightening of $\csmtSpan{\ctSetfop}$. As the general construction of $\beta(\theta,\gamma)$ will be somewhat involved, let us first consider a very special case.

Consider $\left((f,\phi),\theta,\gamma\right) \in \Un(\csmBialg)_1$, where $f$ is constant on $\underline{1}_\ast$ , $\phi:[2] \actmorL [1]$ is the unique active morphism and the natural isomorphism $\gamma_0$ is the identity. Then the family $\theta$ is determined by a pair of composable morphisms $p$ and $q$ in $\csmBialg$, that is, a diagram in $\Alg$
\begin{equation*}
 \xymatrixrowsep{.7pc} \xymatrixcolsep{1.2pc} \xymatrix{ & & Z \ar[ld]_-{r_0} \ar[rd]^-{r_1} & & \\
 & Y_0 \ar[ld]_-{p_0} \ar[rd]^-{p_1} & & Y_1 \ar[ld]_-{q_0} \ar[rd]^-{q_1} & \\
 X_0 & & X_1 & & X_2 }
\end{equation*}
where the middle square is a pseudo-pullback. Since $\beta(\theta,\gamma)$ is cartesian it is determined by its restriction to $\Wedge{M_\phi}$, the diagram described in Example \ref{Vphiact}. The restriction of $\beta(\theta,\gamma)$ to $\Wedge{M_\phi}$ is
\begin{equation*}
 \xymatrixrowsep{.8pc} \xymatrixcolsep{1.1pc} \xymatrix{\beta(X_0) \ar[rr] \ar@{=}[d]  & & \beta(qp) \ar[d] & & \beta(X_2) \ar[ll] \ar@{=}[d] \\
  \beta(X_0) \ar[rr] & & \fB_{q,p} & & \beta(X_2)\ar[ll] \\
  \beta(X_0) \ar@{=}[u] \ar[r] & \beta(p) \ar[ru] & \beta(X_1) \ar[l] \ar[r] & \beta(q) \ar[lu] & \beta(X_2) \ar[l] \ar@{=}[u] }
\end{equation*}
The bisimplicial set $\fB_{q,p}$ is the colimit of the diagram
\begin{equation*}
 \xymatrixrowsep{.4pc}\xymatrixcolsep{.6pc} \xymatrix{ & & & & \beta(Z) \ar[rd] \ar[ld] & & & & \\
 & \alpha(p_0r_0)^v & & \ar[ll] \alpha(r_0)^v \ar[rd] & &  \ar[ld]\alpha(r_1)^h\ar[rr] & & \alpha(q_1r_1)^h & \\
 & & \beta(Y_0) \ar[ru] \ar[rd] \ar[ld] & & \fB_{q_1,p_1} & & \beta(Y_1) \ar[lu] \ar[ld] \ar[rd] & & \\
 & \ar[uu]\alpha(p_0)^v & & \alpha(p_1)^h \ar[ru] & & \alpha(q_0)^v \ar[lu] & & \alpha(q_1)^h \ar[uu] & \\
 \beta(X_0) \ar[ru] & & & & \beta(X_1) \ar[ru] \ar[lu] & & & & \beta(X_2) \ar[lu]}
\end{equation*}
This diagram is a functor $\overline{\beta}(\theta,\gamma)$ from the twisted arrow category of $\Pyr{2}$ to $\tSetfop$. 

Looking back at the description of $\beta$ on $1$-morphisms in Eq.~\ref{betainfone}, one observes that {\em every} object in the restriction of $\beta(\theta,\gamma)$ to $\Wedge{M_\phi}$ is a colimit over some subdiagram of $\overline{\beta}(\theta,\gamma)$. We therefore refer to $\overline{\beta}(\theta,\gamma)$ as the {\em master diagram}, as the functor $\beta(\theta,\gamma)$ is obtained from it by systematically extracting colimits over subdiagrams.

For a general $((f,\phi),\theta,\gamma)\in \Un(\csmBialg)_k$, the master diagram is a functor
\begin{equation}
\label{Masterfirst}
 \overline{\beta}(\theta,\gamma): \Singop{f(0)} \times \Pyr{\Omega_\phi} \to \tSetfop,
\end{equation}
where $\Omega_\phi$ is a suitable generalisation of $\Pyr{2}$ that we shall define in Section \ref{Sec:bialgcomb}.

The process of systematically extracting colimits over subdiagrams of the master diagram $\overline{\beta}(\theta,\gamma)$ is formalised in the construction of the {\em cone functor}
\begin{equation}
\label{Conefirst}
 \kappa_\phi: \Pyr{M_{\phi}} \to \widehat{\Pyr{\Omega_\phi}},
\end{equation}
where $\widehat{\Pyr{\Omega_\phi}}$ is the free completion of $\Pyr{\Omega_\phi}$. 

The functor $\beta(\theta,\gamma)$ is defined by a sequence of right Kan extensions and restrictions as described by the following diagram
\begin{equation}
\label{ResRKE}
 \xymatrixrowsep{.9pc} \xymatrix{ \Cartop{f(0)}\times\Pyr{M_\phi} \ar[rrr]^-{\beta(\theta,\gamma)} & & & \tSetfop \\
 \Singop{f(0)}\times \Wedge{M_\phi} \ar@{^{(}->}[u] \ar@{^{(}->}[r] & \Singop{f(0)} \times\Pyr{M_\phi} \ar[r]_{\id \times \kappa_\phi} & \Singop{f(0)} \times \widehat{\Pyr{\Omega_\phi}} \ar[r] & \tSetfop \ar@{=}[u] \\
 & & \Singop{f(0)}\times \Pyr{\Omega_\phi} \ar@{^{(}->}[u] \ar[r]_-{\overline{\beta}(\theta,\gamma)} & \tSetfop \ar@{=}[u] }
\end{equation}
where each square is a right Kan extension.

\subsection{The totally lax bialgebra structure on \texorpdfstring{$\Simplex{1,1}$}{the (1,1)-simplex}}
\label{Sec:bialgcomb}
As outlined above, the construction of $\beta$ proceeds by first defining, for a given $((f,\phi),\theta,\gamma) \in \Un(\csmBialg)_k$, the master diagram of Eq.~\ref{Masterfirst} and cone functor of Eq.~\ref{Conefirst}. The image of $((f,\phi),\theta,\gamma)$ under $\beta$ is then defined by a sequence of right Kan extensions and restrictions according to the diagram in Eq.~\ref{ResRKE}. 

Before going further we must define the categories $\Omega_\phi$ which appear crucially at all stages of the construction. For a given $\phi \in N_k(\Delta^{\rm op})$, define $\Omega_\phi$ to be the poset having object set
\begin{equation*}
 \left\{ ([i;j],b) \ | \ b \in [k]^{\rm op}, \ [i;j] \in \Pyr{\phi(b)} \right\},
\end{equation*}
and ordering defined by declaring $([i;j],b) \leq ([i';j'],b')$ if and only if $b \leq b' \in [k]^{\rm op}$ and $\phi_{b,b'}([i;j]) \leq [i';j'] \in \Pyr{\phi(b')}$. In fact, $\Omega_\phi$ is the ordinary Grothendieck construction of the functor
\begin{equation*}
 \xymatrix{[k]^{\rm op} \ar[r]^-\phi & \Delta \ar[r]^-{\Pyr{\bullet}} & \Cat}.
\end{equation*}
 Hence, for a natural transformation $\eta: \phi' \Rightarrow \phi$ one has a functor 
\begin{equation*}
\Omega(\eta): \Omega_{\phi'} \to \Omega_{\phi}, \ ([i;j],b) \mapsto ([\eta_b(i); \eta_b(j)], b)
\end{equation*}
and for a morphism $\gamma: [n] \to [k]$ there is a functor 
\begin{equation*}
\Omega(\gamma): \Omega_{\phi \gamma} \to \Omega_{\phi}, \ ([i;j],b) \mapsto ([i;j],\gamma(b)).
\end{equation*}
Finally, observe that $V_\phi$ is a subposet of $\Omega_\phi$ according to the functor
\begin{equation*}
 \nu_\phi: (a,b) \mapsto ([a;a],b).
\end{equation*}

\begin{exam}
For $\phi \in N_k(\Delta^{\rm op})$ the constant map on $[n]$, the poset $\Omega_\phi$ is isomorphic to $\Pyr{n}\times[k]$.
\end{exam}
\begin{exam}
  For the unique active morphism $\phi = ([2] \actmorL [1]) \in N_1(\Delta^{\rm op})$, the poset $\Omega_\phi$ is
  \begin{equation*}
   \xymatrixrowsep{.4pc} \xymatrixcolsep{.9pc} \xymatrix{([0;0],1) \ar[dd] & & ([0;1],1) \ar[rr] \ar[ll] \ar[dd] & & ([1;1],1) \ar[dd] \\
    & & & & \\
    ([0;0],0) & & ([0;2],0) \ar[rr] \ar[ll] \ar[rd] \ar[ld] & & ([2;2],0) \\
    & ([0;1],0) \ar[lu] \ar[rd] & & ([1;2],0) \ar[ru] \ar[ld] & \\
    & & ([1;1],0) & & }
  \end{equation*}
  \end{exam}
  \begin{exam}
  For $\phi \in N_1(\Delta^{\rm op})$ the constant map on $[1]$, the poset $\Pyr{\Omega_\phi}$ is
  \begin{equation*}
   \xymatrixrowsep{.8pc} \xymatrixcolsep{1.4pc} \xymatrix{ \bullet & \bullet \ar[l] \ar[r] & \bullet & \bullet \ar[l] \ar[r] & \bullet \\
   \bullet \ar[u] \ar[d] & \bullet \ar[l] \ar[r] \ar[u] \ar[d] & \bullet \ar[u] \ar[d] & \bullet \ar[l] \ar[r] \ar[u] \ar[d] & \bullet \ar[u] \ar[d] \\
   \bullet & \bullet \ar[l] \ar[r] & \bullet & \bullet \ar[l] \ar[r] & \bullet} 
  \end{equation*}
 \end{exam}
 
 We are now ready to proceed with the step-by-step construction of the symmetric monoidal lax functor $\beta: \csmBialg \rightsquigarrow \csmtSpan\ctSetfop$. 
 
 \paragraph{Construction of the master diagram.} Fix $((f,\phi), \theta,\gamma) \in \Un(\csmBialg)_k$. We shall construct, according to Remark \ref{Rem:oplax}, the master diagram $\overline{\beta}:=\overline{\beta}(\theta,\gamma)$ of Eq.~\ref{Masterfirst} as a normal oplax functor
\begin{equation*}
 \overline{\beta}:\Singop{f(0)} \times \Omega_\phi \nrightarrow \sp{\tSetfop}.
\end{equation*}

Recall from Eq.~\ref{nabladefn} that $\nabla$ is the category of spans of the form $\xymatrixcolsep{.7pc} \xymatrix{ \langle n \rangle & \ar[l] \langle k \rangle \ar@{>->}[r] & \langle m \rangle}$ in $\Delta_+$. The category of {\em levelwise finite bi-$\nabla$ sets}, denoted $\Fin_{\nabla^2}$ is the category of functors $(\nabla^\op)^{\times 2} \to \Fin$. From a pseudo-pullback square in $\Alg$
\begin{equation*}
 \xymatrixrowsep{.9pc}\xymatrix{X' \ar[r]^-{q_2} \ar[d]_-{q_1} & Y' \ar[d]^-{p_2} \\
 X \ar[r]_-{p_1} & Y}
\end{equation*}
one can define the following diagram of levelwise finite bi-$\nabla$ sets:
\begin{equation*}
 \xymatrixrowsep{.9pc} \xymatrix{ & \displaystyle \coprod_{y \in Y} \Simplexn{ p_1^{-1}(y), p_2^{-1}(y)} & \\
 X' \ar[ru] & & Y \ar[lu] }
\end{equation*}
where $X'$ and $Y$ are constant bi-$\nabla$ sets and $\Simplexn{n,m}$ is the functor represented by $(\langle n \rangle, \langle m\rangle)$. The first map arises from the following morphism in $\nabla^{\times 2}$
\begin{equation*}
 \xymatrixrowsep{.7pc} \xymatrix{ & ( \{x'\},\{x'\}) \ar@{=}[ld]\ar@{>->}[rd] & \\
 ( \{x'\},\{x'\}) & & (p_1^{-1}(p_1q_1(x')), p_2^{-1}(p_2q_2(x'))) \, ,}
\end{equation*}
and the second morphism from
\begin{equation*}
 \xymatrixrowsep{.7pc} \xymatrix{  & (p_1^{-1}(y), p_2^{-1}(y) ) \ar[ld]\ar@{=}[rd] & \\
 (\{y\}, \{y\}) & & (p_1^{-1}(y), p_2^{-1}(y) ) \, .}
\end{equation*}
 We can then apply the functor $\fG^*$ of Eq.~\ref{Augdef} to obtain a span in $\tSetfop$.

Now let $f:(s,[i;j],b) \to (s,[i';j'],b')$ be a morphism in $\Singop{f(0)}\times\Omega_\phi$. Then one has the following diagram in $\Pyrnc{\phi(b')}$
\begin{equation}
\label{morphm1}
 \xymatrixrowsep{.9pc} \xymatrix{ [\phi_{b,b'}(i);\phi_{b,b'}(j)] \ar[r] \ar[d] & [i';\phi_{b,b'}(j)] \ar[d] \\
 [\phi_{b,b'}(i);j'] \ar[r] & [i';j'] }
\end{equation}
The image of this square under $\theta_{b'}(f_{b',0}^{-1}(s),-)$ is, by assumption, a pseudo-pullback square in $\Alg$. Define $\overline{\beta}$ on objects to be
\begin{equation*}
 \overline{\beta}_b(s,[i;j]) = \fG^* \left(\theta_b (f^{-1}_{b,0}(s),[i;j])\right) = \theta_b (f^{-1}_{b,0}(s),[i;j]) \cdot \Simplex{1,1}.
\end{equation*}
We can apply the preceding construction to produce a span in $\tSetfop$
\begin{equation*}
 \xymatrixrowsep{.9pc} \xymatrix{ & \overline{\beta}(f) & \\
 \overline{\beta}_{b'}(s,[\phi_{b,b'}(i);\phi_{b,b'}(j)]) \ar[ru] & & \ar[lu] \overline{\beta}_{b'}(s, [i';j']) \ . }
\end{equation*}
The image of $f$ under $\overline{\beta}$ is the span obtained by precomposing the left leg with the natural isomorphism $\gamma_{b,b'}$
\begin{equation*}
 \xymatrixrowsep{.9pc} \xymatrix{ & \overline{\beta}(f) & \\
 \overline{\beta}_b(s,[i;j]) \ar[ru] & & \ar[lu] \overline{\beta}_{b'}(s, [i';j']) }
\end{equation*}

Next, consider a diagram in $\Alg$ of the form
\begin{equation*}
 \xymatrixrowsep{.9pc} \xymatrix{ X'' \ar[r] \ar[d] & Y'' \ar[d]^-{r_2} \ar[r]^-{s_2} & Z'' \ar[d]^-{q_2} \\
 X' \ar[r]_-{r_1} \ar[d]_-{s_1} & Y' \ar[d]^-{t_1} \ar[r]^-{t_2} & Z' \ar[d]^-{p_2} \\
 X \ar[r]_-{q_1} & Y \ar[r]_-{p_1} & Z}
\end{equation*}
where each square is a pseudo-pullback. From this one defines the following commutative diagram bi-$\nabla$ sets
\begin{equation*}
 \xymatrixrowsep{.9pc} \xymatrixcolsep{.5pc}\xymatrix{ &  \displaystyle \coprod_{z \in Z} \Simplexn{ (p_1q_1)^{-1}(z), (p_2q_2)^{-1}(z)} & & & \\
 X'' \ar[r] \ar[ru] & \displaystyle \coprod_{y \in Y'} \Simplexn{ r_1^{-1}(y),  r_2^{-1}(y)}  \ar[u]_-a & \ar[l]  Y'\ar[r]  & \displaystyle \coprod_{z \in Z} \Simplexn{p_1^{-1}(z), p_2^{-1}(z)} \ar[llu]^-b & \ar[l] \ar@/_1.7pc/[lllu] Z \, , }
\end{equation*}
where the morphism $a$ is induced by $s_1$ and $s_2$ and the morphism $b$ is induced by $q_1$ and $q_2$. Applying the functor $\fG^*$ of Eq.~\ref{Augdef} yields a diagram in $\tSetf$.

We will now define the oplax structure on $\overline{\beta}$. Let $\xymatrixcolsep{1pc} \xymatrix{ (s,[i;j],b) \ar[r]^-{f} & (s,[i';j'],b') \ar[r]^-g & (s,[i'';j''],b'')}$ be a pair of composable morphisms in $\Singop{f(0)}\times \Omega_\phi$. Then one has the following diagram in $\Pyrnc{\phi(b'')}$
\begin{equation}
\label{morphm2}
 \xymatrixrowsep{.9pc} \xymatrix{[\phi_{b,b''}(i);\phi_{b,b''}(j)] \ar[r] \ar[d] & [\phi_{b',b''}(i');\phi_{b,b''}(j)] \ar[r] \ar[d] & [i'';\phi_{b,b''}(j)] \ar[d] \\
 [\phi_{b,b''}(i);\phi_{b,b''}(j')] \ar[r] \ar[d] & [\phi_{b',b''}(i'); \phi_{b',b''}(j')] \ar[d]\ar[r] & [i'';\phi_{b',b''}(j')] \ar[d]\\
 [\phi_{b,b''}(i);j''] \ar[r] & [\phi_{b',b''}(i');j''] \ar[r] & [i'';j'']}
\end{equation}
Using the preceding discussion and the natural isomorphisms $\gamma_{b,b''}$ and $\gamma_{b',b''}$ we obtain a commutative diagram
\begin{equation}
\label{BetaOplaxPush}
 \xymatrixrowsep{.9pc} \xymatrix{ \overline{\beta}_{b'}(s',[i';j']) \ar[r] \ar[d] & \overline{\beta}(g) \ar[d] \\
 \overline{\beta}(f) \ar[r] & \overline{\beta}(g \circ f) \ . }
\end{equation}
We define the corresponding component of the oplax structure,
\begin{equation*}
\overline{\fB}_{g,f}:\overline{\beta}(g) \coprod_{\overline{\beta}_{b'}(s',[i';j'])} \overline{\beta}(f) \to \overline{\beta}(g \circ f),
\end{equation*}
to be the universal morphism induced by Eq.~\ref{BetaOplaxPush}.

\begin{lem}
 For each $\left((f,\phi), \theta,\gamma\right) \in \Un(\csmBialg)_k$, the data given above defines a normal oplax functor and hence, by Remark \ref{Rem:oplax}, a functor
 \begin{equation*}
  \overline{\beta}(\theta,\gamma): \Singop{f(0)}\times \Pyr{\Omega_\phi} \to \tSetfop.
 \end{equation*}
\end{lem}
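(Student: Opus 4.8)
The plan is to verify directly the defining axioms of a \emph{normal oplax functor} into the bicategory $\sp{\tSetfop}$ and then invoke Remark \ref{Rem:oplax} to produce the claimed functor out of $\Sigma^{\Omega_\phi}$. The first observation is that $\Sing{f(0)}$ is discrete (a singleton subset is contained in another only if they coincide), so $\Pyr{\Singop{f(0)}\times\Omega_\phi}\cong\Singop{f(0)}\times\Sigma^{\Omega_\phi}$ and it suffices to work one copy of the poset $\Omega_\phi$ at a time, indexed by $s\in f(0)$ and handled uniformly through $\theta$ via the fibres $f^{-1}_{b,0}(s)$. Since $\Omega_\phi$ is a poset, the data of a normal oplax functor $\overline\beta$ reduces to: the object assignment $\overline\beta_b(s,[i;j])$; a span $\overline\beta(f)$ for each relation $f$; a comparison $\overline{\fB}_{g,f}$ for each composable pair; subject to strict preservation of identities together with a single associativity coherence for each chain of length three. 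The naturality subtleties of a genuine oplax functor do not arise, as there is at most one morphism between two objects of a poset.

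The entire construction rests on one functoriality input, which I would isolate first: the assignment sending a pseudo-pullback square in $\Alg$ to the span of level-wise finite bi-$\nabla$ sets described before Eq.~\ref{morphm1}, followed by the functor $\fG^*$ of Eq.~\ref{Augdef}, is functorial for maps of pseudo-pullback squares. Because $\theta$ is valued in $\Bialg$, condition (1) of Definition \ref{BialgDefn} guarantees that the image under $\theta_{b'}$ of the square Eq.~\ref{morphm1} is a pseudo-pullback, so $\overline\beta(f)$ is well defined; twisting the left leg by $\gamma_{b,b'}$ then yields a span in $\tSetfop$ depending only on $f$. \emph{Normality} is immediate from this: for an identity relation the square Eq.~\ref{morphm1} degenerates to a constant square, whose associated bi-$\nabla$ span is the identity span, and the corresponding unit comparison is the identity map.

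For a composable pair $(f,g)$ I would produce $\overline{\fB}_{g,f}$ from the $3\times3$ grid Eq.~\ref{morphm2} in $\doublewedge^{\phi(b'')}$. Applying $\theta_{b''}$ turns this grid into a $3\times3$ grid of pseudo-pullbacks in $\Alg$, and the construction before Eq.~\ref{BetaOplaxPush}, using the twisting isomorphisms $\gamma_{b,b''}$ and $\gamma_{b',b''}$, yields the commuting square Eq.~\ref{BetaOplaxPush}; the comparison is the map out of the pushout supplied by its universal property. The one point requiring verification here is that horizontal and vertical pastings of pseudo-pullback squares are again pseudo-pullbacks, so that the span attached to the composite relation $g\circ f$ is genuinely the pushout target. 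This follows at once from the defining conditions: the image in $\Fin$ is an honest pullback, pullbacks paste, and the fibrewise order-isomorphisms compose.

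The main obstacle is the length-three coherence. The plan is to realize all four relevant comparison maps as universal maps extracted from a single functorial diagram, namely the evident $4\times4$ analogue of Eq.~\ref{morphm2} associated to a chain $f,g,h$, whose image under $\theta_{b'''}$ is a grid of pseudo-pullbacks in $\Alg$. Since every square of this grid commutes and pseudo-pullbacks paste, both bracketings of the triple comparison are the unique morphism out of the triple pushout $\overline\beta(h)\coprod\overline\beta(g)\coprod\overline\beta(f)$ determined by this one grid, and therefore coincide by the universal property of iterated pushouts. The only bookkeeping that demands care is the compatibility of the twists: the composite isomorphism used to twist the outermost left leg must agree with the composite of the intermediate $\gamma$'s. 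This holds because each $\gamma_{b,b'}$ is defined as the composite of the consecutive natural isomorphisms $\gamma_i$ recorded in the datum $(f,\phi)\in N_k(\FinDel)$ of $\Un(\csmBialg)_k$, and composition of natural isomorphisms is associative, so the required cocycle identity is automatic.
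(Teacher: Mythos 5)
Your proposal is correct and follows essentially the same route as the paper's proof: normality is read off from the degenerate $2\times 2$ grids, and the single ternary coherence is established by exhibiting both bracketings as the universal morphism out of the same colimit extracted from the larger grid of pseudo-pullbacks attached to a triple of composable relations. The extra points you flag (pasting of pseudo-pullbacks, the cocycle compatibility of the $\gamma$'s) are left implicit in the paper but are correctly handled.
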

\begin{proof}
 To see that $\overline{\fB}_{\id, g} = \id_{\overline{\beta}(g)}=\overline{\fB}_{g, \id}$ one need only note that the $2\times 2$ grids of pseudo-pullbacks in $\Alg$ defining these components take, respectively, the forms
 \begin{equation*}
  \xymatrixrowsep{1.1pc} \xymatrixcolsep{1.1pc} \xymatrix{\bullet \ar[r] \ar[d] & \bullet \ar[d] \ar@{=}[r] & \bullet \ar[d] &  & \bullet \ar@{=}[r] \ar@{=}[d]& \bullet \ar[r]\ar@{=}[d] & \bullet \ar@{=}[d] \\
  \bullet \ar[r] \ar@{=}[d] & \bullet \ar@{=}[d] \ar@{=}[r] & \bullet \ar@{=}[d] & {\rm and} & \bullet \ar[d] \ar@{=}[r] & \bullet \ar[r] \ar[d] & \bullet \ar[d] \\
  \bullet \ar[r] & \bullet \ar@{=}[r] & \bullet & & \bullet \ar@{=}[r] & \bullet \ar[r] & \bullet }
 \end{equation*}
Given a triple $\xymatrixcolsep{1.1pc} \xymatrix{w \ar[r]^-g & x \ar[r]^-h & y \ar[r]^-i & z}$ of composable morphisms in $\Singop{f(0)}\times\Omega_\phi$ one has, in the same manner as above, a $3 \times 3$ grid of pseudo-pullbacks in $\Alg$. From this one obtains a diagram $\tSetf$
\begin{equation*}
 \xymatrixrowsep{.7pc} \xymatrix{ & & & \overline{\beta}(ihg) & & & \\
 & & \overline{\beta}(gh) \ar[ru] & & \overline{\beta}(ih) \ar[lu] & & \\
 & \overline{\beta}(g) \ar[ru] & & \overline{\beta}(h) \ar[ru] \ar[lu] & & \overline{\beta}(i) \ar[lu] & \\
 \overline{\beta}(w) \ar[ru] & & \overline{\beta}(x) \ar[ru] \ar[lu] & & \overline{\beta}(y) \ar[ru] \ar[lu] & & \overline{\beta}(z) \ar[lu] }
\end{equation*}
Then one has that
\begin{equation*}
 \overline{\fB}_{i,hg} \circ \left(\id_{\overline{\beta}(i)} \coprod_{\overline{\beta}(y)} \overline{\fB}_{h,g}\right)  = \overline{\fB}_{ih,g} \circ  \left( \overline{\fB}_{i,h} \coprod_{\overline{\beta}(x)} \id_{\overline{\beta}(g)}\right)
\end{equation*}
since both the left and right hand side are universal morphisms for the same colimit. Namely, the colimit over the bottom two rows of the preceding diagram. 
\end{proof}

 The functor $\overline{\beta}$ is vertically constant in the following sense.
\begin{lem}
 \label{MasterVert}
 For each $\left((f,\phi), \theta,\gamma\right) \in \Un(\csmBialg)_k$, the image of every morphism in the composite functor
 \begin{equation*}
  \xymatrix{ \Singop{f(0)}\times \Pyr{V_\phi} \ar[r]^-{\Pyr{\nu_\phi}} & \Singop{f(0)}\times \Pyr{\Omega_\phi}\ar[r]^-{\overline{\beta}(\theta,\gamma)} & \tSetfop}
 \end{equation*}
is an isomorphism. 
\end{lem}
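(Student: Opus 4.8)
The plan is to reduce the claim to the generating morphisms of $\Pyr{V_\phi}$ and then exploit that $\nu_\phi$ lands on intervals whose endpoints are \emph{degenerate} intervals, which forces the pseudo-pullback squares of Eq.~\ref{morphm1} to collapse. First I would note that the poset $\Sing{f(0)}$ of singletons of $f(0)$ is discrete, so that every morphism of $\Singop{f(0)} \times \Pyr{V_\phi}$ has the form $(\id,\tau)$ with $\tau$ a morphism of the twisted arrow category $\Pyr{V_\phi}$. Since $\overline{\beta}(\theta,\gamma)$ is a functor, it then suffices to check the claim on the generators of $\Pyr{V_\phi}$. As $V_\phi$ is a poset, these generators shrink an interval $[v;v']$ by one covering relation at one of its endpoints, giving two families: a left-shrinking family $[v;v'] \to [w;v']$ with $w$ covering $v$, and a right-shrinking family $[v;v'] \to [v;w']$ with $v'$ covering $w'$.

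Next I would compute $\overline{\beta}(\theta,\gamma)$ on the image of $\Pyr{\nu_\phi}$. An object $[v;v']$ of $\Pyr{V_\phi}$ is a vertical morphism $v=(a,b) \to (\phi_{b,b'}(a),b')=v'$, and $\nu_\phi$ sends it to the interval between the degenerate intervals $([a;a],b)$ and $([\phi_{b,b'}(a);\phi_{b,b'}(a)],b')$. Substituting $i=j=a$ and $i'=j'=\phi_{b,b'}(a)$ into Eq.~\ref{morphm1}, all four corners collapse to the single object $[\phi_{b,b'}(a);\phi_{b,b'}(a)]$, so $\theta_{b'}(f_{b',0}^{-1}(s),-)$ carries this square to the identity square on $W:=\theta_{b'}(f_{b',0}^{-1}(s),[\phi_{b,b'}(a);\phi_{b,b'}(a)])$. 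The span $\overline{\beta}(f)$ built from this degenerate square therefore has apex $W\cdot\Simplex{1,1}$ (apply $\fG^*$ to $\coprod_{w\in W}\Simplexn{1,1}$), and because every structure map in its construction is an identity on singletons, both of its legs are isomorphisms — the left leg after the further precomposition with the natural isomorphism $\gamma_{b,b'}$. Thus every object of $\Pyr{V_\phi}$ is sent to the apex of a span both of whose legs are isomorphisms.

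Finally I would dispatch the two families of generators by a cancellation argument. Under the correspondence of Remark~\ref{Rem:oplax}, the endpoint restrictions $[v;v'] \to [v;v]$ and $[v;v'] \to [v';v']$ are sent by $\overline{\beta}$ to the left and right legs of $\overline{\beta}(f)$, which are isomorphisms by the previous paragraph. For a right-shrinking generator $[v;v'] \to [v;w']$, the factorization $[v;v'] \to [v;w'] \to [v;v]$ in $\Pyr{V_\phi}$ shows that its image $m$ satisfies $\ell'\circ m=\ell$, where $\ell$ and $\ell'$ are the (invertible) left legs of the two relevant spans; hence $m=(\ell')^{-1}\circ\ell$ is an isomorphism. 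A left-shrinking generator is treated identically using the right legs and the factorization $[v;v'] \to [w;v'] \to [v';v']$. Since every morphism of $\Pyr{V_\phi}$ factors into such generators, functoriality establishes the claim. The main obstacle is the opening identification of this paragraph: one must verify, by unwinding the normal oplax functor description of $\overline{\beta}$ through Remark~\ref{Rem:oplax}, that the endpoint-restriction morphisms are indeed sent to the span legs. Once this is pinned down, the degeneracy of Eq.~\ref{morphm1} established in the second paragraph does all the remaining work.
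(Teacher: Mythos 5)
Your proof is correct, and it rests on the same key observation as the paper's — that on the image of $\nu_\phi$ the square of Eq.~\ref{morphm1} has all four corners equal to the single degenerate interval $[\phi_{b,b'}(a);\phi_{b,b'}(a)]$, so that the associated pseudo-pullback is an identity square and the resulting span has invertible legs (invertibility surviving the precomposition with the isomorphism $\gamma_{b,b'}$). Where you diverge is in how you finish. The paper stays on the oplax side of Remark~\ref{Rem:oplax}: it reduces the claim to showing that the corresponding normal oplax functor has spans with invertible legs \emph{and} invertible oplax structure components $\overline{\fB}_{g,f}$, and disposes of both at once by noting that Eq.~\ref{morphm1} \emph{and} Eq.~\ref{morphm2} become constant on $V_\phi$. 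You instead work directly with the functor on $\Pyr{V_\phi}$, reduce to the one-step endpoint-shrinking generators (legitimate, since each interval of $V_\phi$ is a finite chain, so its subinterval poset is that of some $[m]$), and recover invertibility of the remaining generators from the two-out-of-three identity $\ell'\circ m=\ell$ with $\ell,\ell'$ the already-inverted legs. This buys a small but genuine economy: you never need to look at Eq.~\ref{morphm2} or the oplax components at all; the unary degeneracy plus functoriality does all the work. The one step you flag as needing verification — that the endpoint restrictions $[v;v']\to[v;v]$ and $[v;v']\to[v';v']$ are sent to the legs of $\overline{\beta}(f)$ — is exactly how the equivalence of Remark~\ref{Rem:oplax} is set up (the paper builds $\overline{\beta}$ as a normal oplax functor and passes to $\Pyr{\Omega_\phi}$ through that equivalence), so it is not a gap.
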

\begin{proof}
 By Remark \ref{Rem:oplax} it suffices to show that for the normal oplax functor
 \begin{equation*}
  \xymatrix{ \Singop{f(0)}\times V_\phi \ar[r]^-{\nu_\phi} & \Singop{f(0)}\times\Omega_\phi \ar[r]^-{\overline{\beta}(\theta,\gamma)} & \sp{\tSetfop}}
 \end{equation*}
 both the images of morphisms and the components of the oplax structure are isomorphisms. This follows from observing that when restricted to $V_\phi$ the diagrams \ref{morphm1} and \ref{morphm2} defining, respectively, the images of morphisms and components of the oplax structure, are constant. 
\end{proof}

Before moving on to the construction of the cone functor, let us consider a simple example of the master diagram that will reappear later in this section.

\begin{exam}
\label{MasterOne}
 Consider the case of $(f,\phi) \in N_1(\FinDel)$ with $f$ the constant map on $\underline{1}_\ast$ and $\phi$ the constant map on $[1]$. Then an element $((f,\phi),\theta,\gamma)\in \Un(\csmBialg)_1$ is the data of a diagram 
 \begin{equation*}
  \xymatrixrowsep{1.1pc} \xymatrix{ X_0 \ar[d]^-{\wr}_-{\gamma_0} & \ar[l]_-{p_0} Y \ar[d]_-{\wr}^-{\gamma} \ar[r]^-{p_1} & X_1 \ar[d]_-{\wr}^-{\gamma_1} \\
  X_0' & \ar[l]_-{p_0'} Y' \ar[r]^-{p_1'} & X_1'\, ,}
 \end{equation*}
 in $\Alg$ and $\overline{\beta}$ is
 \begin{equation*}
  \xymatrixrowsep{1.1pc} \xymatrixcolsep{1.1pc} \xymatrix{\overline{\beta}(X_0) \ar[d]^-\wr \ar[r] & \overline{\beta}(p_0) \ar[d]^-\wr & \overline{\beta}(Y) \ar[l] \ar[d]^-\wr \ar[r] & \overline{\beta}(p_1) \ar[d]^-\wr & \overline{\beta}(X_1) \ar[l] \ar[d]^-\wr \\
  \overline{\beta}(\gamma_0) \ar[r] & \overline{\beta}(\gamma_0p_0) & \overline{\beta}(\gamma) \ar[l] \ar[r] & \overline{\beta}(\gamma_1p_1)  & \overline{\beta}(\gamma_1) \ar[l]  \\
  \overline{\beta}(X_0') \ar[u]_-\wr \ar[r] & \overline{\beta}(p_0') \ar[u]_-\wr & \overline{\beta}(Y') \ar[l] \ar[u]_-\wr \ar[r] & \overline{\beta}(p_1') \ar[u]_-\wr & \overline{\beta}(X_1') \ar[l] \ar[u]_-\wr} 
 \end{equation*}
\end{exam}

\paragraph{Construction of the cone functor.} Fix $\phi \in N_k(\Delta^\op)$. We shall construct, as per Remark \ref{Rem:oplax}, the cone functor of Eq.~\ref{Conefirst} as a normal oplax functor
\begin{equation*}
 \kappa_\phi: M_\phi \nrightarrow \sp{\widehat{\Pyr{\Omega_\phi}}},
\end{equation*}
where $\widehat{\Pyr{\Omega_\phi}}$ is the free completion of $\Pyr{\Omega_\phi}$, which we now define.

The {\em free completion} of a poset $Q$, denoted $\widehat{Q}$, is the opposite of its poset of upwards-closed subsets, that is, 
\begin{equation*}
 \widehat{P} = \Fun(P, [1])^{\rm op}.
\end{equation*}
The free completion $\widehat{Q}$ satisfies the following universal property: any functor $F: Q \to D$, for $D$ complete, uniquely factors as
\begin{equation*}
 \xymatrixrowsep{.9pc} \xymatrix{ \hat{Q} \ar[r]^-{\hat{F}} & D\, , \\
 Q \ar@{^{(}->}[u]^-\uparrow \ar[ru]_-F & }
\end{equation*}
where 
\begin{equation*}
 \uparrow\, :\xymatrixcolsep{1.5pc}\xymatrix{Q \ar@{^{(}->}[r]& \widehat{Q}}, \quad q \mapsto \{ q' \in Q \ | \ q'\geq q\}.
\end{equation*}

The definition of the cone functor $\kappa_\phi$ is based on the following observation. To each interval $[(a,b);(a',b')]$ in $M_\phi$ one can associate the subposet of $\Omega_\phi$,
\begin{equation*}
 \overline{\kappa}_\phi [(a,b);(a',b')] = \left\{ ([i;j],c) \ | \ b \leq c \leq b', \, \phi_{b,c}(a)\leq i, \, \phi_{c,b'}(j) \leq a' \right\}.
\end{equation*}
These subposets are such that for $[x;x'] \subset [y;y']$ one has that $\overline{\kappa}_\phi[x;x'] \subset \overline{\kappa}_\phi[y;y']$. 
\begin{exam}
 Let $\phi:[2] \actmorL [1]$ be the unique active morphism and consider the following two intervals, coloured in red and blue, in $M_\phi$
 \begin{equation*}
 \xymatrixrowsep{.8pc}\xymatrixcolsep{1.1pc} \xymatrix{{\color{red} (0,1)} \ar[d] {\color{red} \ar@[red][rr]} & & {\color{red} (1,1)} \ar[d] \\
 {\color{blue} (0,0)} {\color{blue} \ar@[blue][r]} & {\color{blue} (1,0)} {\color{blue} \ar@[blue][r]} & {\color{blue} (2,0)} }
\end{equation*}
The associated subposets of $\Omega_\phi$ defined by $\overline{\kappa}_\phi$ are the following
\begin{equation*}
   \xymatrixrowsep{.4pc} \xymatrixcolsep{.9pc} \xymatrix{{\color{red} ([0;0],1) \ar[dd]} & & {\color{red} ([0;1],1) \ar@[red][rr]} \ar@[red][ll] \ar[dd] & & {\color{red} ([1;1],1)} \ar[dd] \\
    & & & & \\
    {\color{blue} ([0;0],0)} & & {\color{blue} ([0;2],0) \ar@[blue][rr] \ar@[blue][ll] \ar@[blue][rd] \ar@[blue][ld]} & & {\color{blue} ([2;2],0)}  \\
    & {\color{blue} ([0;1],0) \ar@[blue][lu] \ar@[blue][rd]} & & {\color{blue} ([1;2],0) \ar@[blue][ru] \ar@[blue][ld]} & \\
    & & {\color{blue} ([1;1],0)} & & }
  \end{equation*}
\end{exam}

Informally, each interval in $M_\phi$ determines a region in $\Omega_\phi$. The cone functor maps each interval to the formal limit over all intervals in its associated region. To make this rigorous, we define the cone functor $\kappa_\phi$ on objects to be
\begin{equation*}
 \kappa_\phi: x \mapsto \Pyr{\overline{\kappa}_\phi[x;x]} \in \widehat{ \Pyr{\Omega_\phi}},
\end{equation*}
and define it to send a morphism $x \leq x'$ to the following diagram defining a span in $\widehat{\Pyr{\Omega_\phi}}$
\begin{equation*}
 \xymatrixrowsep{.3pc}\xymatrixcolsep{.9pc} \xymatrix{ &  \kappa_\phi[x;x'] & & & &  \Pyr{\overline{\kappa}_\phi[x;x']} & \\
  & & & = & & & \\
 \kappa_\phi (x) \ar@{^{(}->}[ruu] & & \kappa_\phi(x')\ar@{_{(}->}[luu] & & \Pyr{\overline{\kappa}_\phi[x;x]} \ar@{^{(}->}[ruu] & &  \Pyr{\overline{\kappa}_\phi[x';x']}\, . \ar@{_{(}->}[luu] & }
\end{equation*}
Given a pair of composable morphisms $x \leq x' \leq x''$ in $M_\phi$, one has the following commutative diagram in $\widehat{\Pyr{\Omega_\phi}}^{\rm op}$
\begin{equation}
\label{KappaOplaxPush}
 \xymatrixrowsep{.9pc} \xymatrix{ \kappa_\phi(x') \ar@{^{(}->}[r] \ar@{^{(}->}[d] & \kappa_\phi[x';x''] \ar@{^{(}->}[d] \\
 \kappa_\phi[x;x'] \ar@{^{(}->}[r] & \kappa_\phi[x;x''] \, .}
\end{equation}
We define the corresponding component of the oplax structure,
\begin{equation*}
 K_\phi[x;x';x'']: \kappa_\phi[x';x''] \coprod_{\kappa_\phi(x')} \kappa_\phi[x;x'] \to \kappa_\phi[x;x''],
\end{equation*}
to be the universal morphism induced by the diagram in Eq.~\ref{KappaOplaxPush}.

\begin{lem}
 For each $\phi \in N_k(\Delta^{\rm op})$ the above data defines a normal oplax functor and hence, by Remark \ref{Rem:oplax}, a functor
 \begin{equation*}
  \kappa_\phi:\Pyr{M_\phi} \to \widehat{\Pyr{\Omega_\phi}}.
 \end{equation*}
\end{lem}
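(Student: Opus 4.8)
The plan is to verify directly that the assignment $\kappa_\phi$ satisfies the axioms of a normal oplax functor $M_\phi \nrightarrow \sp{\widehat{\Pyr{\Omega_\phi}}}$; the asserted functor $\Pyr{M_\phi} \to \widehat{\Pyr{\Omega_\phi}}$ then follows from Remark \ref{Rem:oplax}, whose hypothesis is met since the free completion $\widehat{\Pyr{\Omega_\phi}}$ has all limits. Recall that the data of such a normal oplax functor consists of the objects $\kappa_\phi(x)$, the spans $\kappa_\phi[x;x']$, and the comparison $2$-cells $K_\phi[x;x';x'']$, subject to normality and to the associativity and unit coherences of the oplax structure. I would organise the verification around a single structural observation that trivialises most of the bookkeeping.

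That observation is that $\widehat{\Pyr{\Omega_\phi}}$ is a \emph{poset}: by definition it is the opposite of the poset of upwards-closed subsets of $\Pyr{\Omega_\phi}$. Consequently, in the span bicategory $\sp{\widehat{\Pyr{\Omega_\phi}}}$ there is at most one $2$-morphism between any fixed pair of parallel $1$-morphisms, since such a $2$-morphism amounts to a map between the apices of two spans and maps in a poset are unique. It follows that every coherence equation required of the oplax structure---associativity of the $K_\phi$ and their compatibility with units---asserts the equality of two parallel $2$-cells and therefore holds automatically. The proof thus reduces to two points: that the structure maps exist, and that normality holds.

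For existence, I would invoke the monotonicity property recorded before the construction, namely that $[x;x'] \subseteq [y;y']$ implies $\overline{\kappa}_\phi[x;x'] \subseteq \overline{\kappa}_\phi[y;y']$. Applied to $[x;x] \subseteq [x;x']$ and $[x';x'] \subseteq [x;x']$ this yields the inclusions of regions defining the two legs of the span $\kappa_\phi[x;x']$. Applied to $[x;x'], [x';x''] \subseteq [x;x'']$ for a composable pair $x \le x' \le x''$, it gives $\overline{\kappa}_\phi[x;x'] \cup \overline{\kappa}_\phi[x';x''] \subseteq \overline{\kappa}_\phi[x;x'']$; since pullbacks in $\widehat{\Pyr{\Omega_\phi}}$ are computed as unions of upwards-closed subsets, the composite span has apex $\kappa_\phi[x';x''] \coprod_{\kappa_\phi(x')} \kappa_\phi[x;x']$, and the above inclusion is exactly the commuting square of Eq.~\ref{KappaOplaxPush} that induces the universal comparison map $K_\phi[x;x';x'']$. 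Normality is immediate from $\kappa_\phi[x;x] = \kappa_\phi(x)$ with identity legs, and the unit coherences follow by noting that when $x = x'$ or $x' = x''$ the square of Eq.~\ref{KappaOplaxPush} degenerates, forcing $K_\phi$ to be an identity.

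The only real obstacle, and the one step I would treat with care, is the identification of the composite-span apex with the union of regions---that is, the computation of pullbacks in $\widehat{\Pyr{\Omega_\phi}}$ as unions of upwards-closed sets, together with the verification that this union sits inside $\overline{\kappa}_\phi[x;x'']$. Once this is pinned down, associativity may even be exhibited concretely, exactly as in the proof for $\overline{\beta}$: for a triple $x \le x' \le x'' \le x'''$ both bracketings of the comparison maps are the universal morphism out of the triple union $\overline{\kappa}_\phi[x;x'] \cup \overline{\kappa}_\phi[x';x''] \cup \overline{\kappa}_\phi[x'';x''']$ into $\overline{\kappa}_\phi[x;x''']$, and hence agree; but the poset observation above already makes this coincidence automatic, so no grid computation is strictly necessary.
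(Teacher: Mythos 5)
Your proof is correct, and it reaches the conclusion by a genuinely different route for the coherence part. The paper's proof, like its proof of the analogous lemma for $\overline{\beta}$, checks normality directly from the degenerate instances of Eq.~\ref{KappaOplaxPush} and then verifies the associativity equation for a triple $w\leq x\leq y\leq z$ by exhibiting both composites of comparison cells as the universal morphism out of one and the same colimit (that of the lower two rows of the evident $\Pyr{3}$-shaped diagram of regions). You instead observe that $\widehat{\Pyr{\Omega_\phi}}$ is a poset, so that the hom-categories of $\sp{\widehat{\Pyr{\Omega_\phi}}}$ have at most one $2$-cell between any parallel pair of spans, and hence every coherence equation holds as soon as the relevant $2$-cells exist; the substantive content then reduces to the monotonicity $[x;x']\subseteq[y;y'] \Rightarrow \overline{\kappa}_\phi[x;x']\subseteq\overline{\kappa}_\phi[y;y']$, which produces both the legs of the spans and the square of Eq.~\ref{KappaOplaxPush}, together with the identification of composition of spans with union of upwards-closed subsets. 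Both arguments are sound and both must establish the same existence statements; what your poset observation buys is that the associativity check becomes vacuous, while the paper's explicit universal-morphism computation has the virtue of being uniform with the proof for the master diagram $\overline{\beta}$, where the target $\tSetfop$ is not a poset and no such shortcut is available. You also correctly flag, and sketch, the one point deserving care, namely that pullback in $\widehat{\Pyr{\Omega_\phi}}$ is union of upwards-closed subsets and that $\uparrow\Pyr{\overline{\kappa}_\phi[x;x']}\cup\uparrow\Pyr{\overline{\kappa}_\phi[x';x'']}$ lands inside $\uparrow\Pyr{\overline{\kappa}_\phi[x;x'']}$; with that in place your argument is complete.
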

\begin{proof}
 From the diagram in Eq.~\ref{KappaOplaxPush} it is clear that $K_\phi[x;x;x'] = \id_{\kappa_\phi[x;x']}=K_\phi[x;x';x']$. Next, consider a triple of composable morphisms $w \leq x \leq y \leq z$ in $M_\phi$. One has the following diagram
 \begin{equation*}
 \xymatrixrowsep{.7pc} \xymatrixcolsep{1pc} \xymatrix{ & & & \kappa_\phi[w;z] & & & \\
 & & \kappa_\phi[w;y] \ar[ru] & & \kappa_\phi[x;z] \ar[lu] & & \\
 & \kappa_\phi[w;x] \ar[ru] & & \kappa_\phi[x;y] \ar[ru] \ar[lu] & & \kappa_\phi[y;z] \ar[lu] & \\
 \kappa_\phi(w) \ar[ru] & & \kappa_\phi(x) \ar[ru] \ar[lu] & & \kappa_\phi(y) \ar[ru] \ar[lu] & & \kappa_\phi(z) \, .\ar[lu] }
\end{equation*}
Then the following equation holds
\begin{equation*}
 K_\phi[w;y;z] \circ \left( \id_{\kappa_\phi[y;z]} \coprod_{\kappa_\phi(y)} K_\phi[w;x;y]\right)  = K_\phi[w;x;z] \circ \left( K_\phi[x;y;z] \coprod_{\kappa_\phi(x)} \id_{\kappa_\phi[w;x]} \right), 
\end{equation*}
since both sides of the equation are the universal morphism coming from the colimit of the lower two rows of the preceding diagram.
\end{proof}

The cone functor $\kappa_\phi$ is natural in the following sense.
\begin{lem}
\label{ConeNat}
 For each $\phi \in N_k(\Delta^{\rm op})$ and $\psi:[n] \to [k]$ the following diagram commutes
 \begin{equation*}
  \xymatrixrowsep{1.1pc} \xymatrix{ \Pyr{M_{\phi \psi}} \ar[d]_-{\Pyr{M(\psi)}} \ar[r]^-{\kappa_{\phi \psi}} & \widehat{\Pyr{\Omega_{\phi \psi}}} \ar[d]^-{\widehat{\Pyr{\Omega(\psi)}}} \\
  \Pyr{M_\phi} \ar[r]_-{\kappa_\phi} & \widehat{\Pyr{\Omega_\phi}}\, . }
 \end{equation*}
\end{lem}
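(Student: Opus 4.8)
The plan is to exploit the fact that both the source $\Pyr{M_{\phi\psi}}$ and the target $\widehat{\Pyr{\Omega_\phi}}=\Fun(\Pyr{\Omega_\phi},[1])^{\rm op}$ are (nerves of) posets. A functor between posets is merely an order-preserving map, and two such maps are equal as soon as they agree on objects, with no further coherence to check. So the first move is to reduce commutativity of the square to the assertion that the two composites send each object of $\Pyr{M_{\phi\psi}}$ -- that is, each interval $\xi=[(a,b);(a',b')]$ of $M_{\phi\psi}$ -- to the same element of $\widehat{\Pyr{\Omega_\phi}}$.

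Next I would write both composites out explicitly on such a $\xi$. Going down-then-across, $\Pyr{M(\psi)}$ relabels the level coordinate to give $\xi^\psi:=[(a,\psi(b));(a',\psi(b'))]$, and $\kappa_\phi$ returns the up-set $\uparrow\Pyr{\overline{\kappa}_\phi[\xi^\psi]}$ of $\Pyr{\Omega_\phi}$ generated by the intervals with endpoints in the region $\overline{\kappa}_\phi[\xi^\psi]$. Going across-then-down, $\kappa_{\phi\psi}$ returns $\uparrow\Pyr{\overline{\kappa}_{\phi\psi}[\xi]}$, and applying the free-completion functor to $\Pyr{\Omega(\psi)}$ gives, by the universal property of $\widehat{(-)}$, the upward closure of the image: $\widehat{\Pyr{\Omega(\psi)}}(U)=\uparrow\Pyr{\Omega(\psi)}(U)$. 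The lemma thus reduces to the single identity of up-sets $\uparrow\Pyr{\overline{\kappa}_\phi[\xi^\psi]}=\uparrow\Pyr{\Omega(\psi)}\bigl(\Pyr{\overline{\kappa}_{\phi\psi}[\xi]}\bigr)$ inside $\Pyr{\Omega_\phi}$.

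The mechanism driving this identity is the compatibility of the regions $\overline{\kappa}$ with level relabelling, which rests on the cocycle identity $\phi_{b,b'}=\phi_{c,b'}\circ\phi_{b,c}$ together with $(\phi\psi)_{b,d}=\phi_{\psi(b),\psi(d)}$ and the monotonicity of $\psi$. One inclusion is immediate: if $([i;j],d)\in\overline{\kappa}_{\phi\psi}[\xi]$ then its defining conditions $\phi_{\psi(b),\psi(d)}(a)\le i$ and $\phi_{\psi(d),\psi(b')}(j)\le a'$ are exactly the membership conditions of $([i;j],\psi(d))$ in $\overline{\kappa}_\phi[\xi^\psi]$ at level $c=\psi(d)$, while $b\le d\le b'$ forces $\psi(b)\le\psi(d)\le\psi(b')$. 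Hence $\Omega(\psi)\bigl(\overline{\kappa}_{\phi\psi}[\xi]\bigr)\subseteq\overline{\kappa}_\phi[\xi^\psi]$, and passing to $\Pyr{-}$ and upward closures yields the inclusion $\supseteq$.

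The hard part will be the reverse inclusion: I must show every interval of $\overline{\kappa}_\phi[\xi^\psi]$ lies in the upward closure of the image, the delicate case being intervals lying over a level $c$ with $\psi(b)\le c\le\psi(b')$ that is not of the form $\psi(d)$. The approach is to sandwich each object $([i;j],c)\in\overline{\kappa}_\phi[\xi^\psi]$ between images $\Omega(\psi)(y)\le([i;j],c)\le\Omega(\psi)(y')$ with $y\le y'$ in $\overline{\kappa}_{\phi\psi}[\xi]$, choosing $d,d'$ so that $\psi(d),\psi(d')$ bracket $c$ inside $\psi([b,b'])$ and propagating the endpoint inequalities of $\overline{\kappa}_\phi[\xi^\psi]$ through the transition maps $\phi_{\psi(d),c}$ and $\phi_{c,\psi(d')}$ by the cocycle identity. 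This sandwiching is precisely the cofinality content carried by the cone functor's formal-limit description -- the intervals over the retained levels must generate, as an up-set, those over the skipped levels -- and it is this step that genuinely uses the geometry of the regions $\overline{\kappa}$; it is transparent when $\psi$ is surjective, in which case every level is retained and $([i;j],c)$ is itself an image. I would expect the bulk of the work, and the only real subtlety (in controlling the right-hand endpoints through the possibly non-surjective maps $\phi_{\psi(d),c}$), to be concentrated here. Once both inclusions hold the two up-sets coincide, so the two composites agree on every object; the source and target being posets, the square then commutes on the nose, which is the $M(\gamma)$-type naturality recorded in the statement.
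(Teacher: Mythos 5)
Your reduction is exactly the paper's: since the source and target of both composites are posets, the square commutes as soon as the two composites agree on objects, and on an object $[x;x']$ of $\Pyr{M_{\phi\psi}}$ this is the single identity of up-sets $\uparrow\Pyr{\Omega(\psi)\overline{\kappa}_{\phi\psi}[x;x']}=\Pyr{\overline{\kappa}_\phi[M(\psi)x;M(\psi)x']}$, which is precisely the display to which the paper reduces the lemma before declaring the verification ``straightforward, but notationally cumbersome.'' Your computation of $\widehat{\Pyr{\Omega(\psi)}}$ as $U\mapsto\,\uparrow\!\Pyr{\Omega(\psi)}(U)$ and your proof of the inclusion $\supseteq$ are both correct, and in fact you have already done more than the paper does.

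The reverse inclusion, however, is a genuine gap and not merely an unexecuted computation: the sandwiching step fails for exactly the reason you flag, namely that the transition maps $\phi_{\psi(d),c}$ over a skipped level $c$ need not be surjective. To dominate $([i;j],c)\in\overline{\kappa}_\phi[M(\psi)x;M(\psi)x']$ from below by an image element $([i_0;j_0],\psi(d))$ you need $j\le\phi_{\psi(d),c}(j_0)$ as well as $\phi_{\psi(d),c}(i_0)\le i$, and no such $j_0$ need exist. Concretely, take $k=2$ with $\phi(2)=[0]$, $\phi(1)=\phi(0)=[1]$, $\phi_{2,1}(0)=0$, $\phi_{1,0}=\id$, let $\psi\colon[1]\to[2]$ be the face with image $\{0,2\}$, and let $\xi=[(0,1);(1,0)]$ in $M_{\phi\psi}$. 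Then $([1;1],1)$ lies in $\overline{\kappa}_\phi[M(\psi)\xi]$, but the only element of $\Omega(\psi)\overline{\kappa}_{\phi\psi}[\xi]$ at a level $\le 1$ in $[2]^{\rm op}$ is $([0;0],2)$, and $([0;0],2)\not\le([1;1],1)$ in $\Omega_\phi$ because $[1;1]\not\subseteq[\phi_{2,1}(0);\phi_{2,1}(0)]=[0;0]$. So the two up-sets differ, the sandwich you need does not exist, and the argument cannot be closed as outlined. Since your reduction coincides with the paper's, this is a defect you inherit from the source rather than introduce: as stated, the identity (and hence the commutativity asserted by the lemma) appears to fail for non-surjective transition maps, and either the regions $\overline{\kappa}_\phi$, the functoriality $\widehat{\Pyr{\Omega(\psi)}}$, or the statement itself must be adjusted before the hard direction can be proved.
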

\begin{proof}
It suffices to show that for each interval $[x;x']$ in $M_{\phi\psi}$,
\begin{equation*}
 \uparrow\, \Pyr{\Omega(\psi) \overline{\kappa}_{\phi \psi} [x;x']} = \Pyr{\overline{\kappa}_\phi[M(\psi)x;M(\psi)x']}.
\end{equation*}
This is straightforward, but notationally cumbersome, to verify.
\end{proof}

Finally, it is trivial to verify that $\kappa_\phi$ is vertically constant in the following sense.
\begin{lem}
 \label{ConeVert}
 For each $\phi \in N_k(\Delta^{\rm op})$ the following diagram commutes
 \begin{equation*}
  \xymatrixrowsep{.9pc} \xymatrix{ \Pyr{M_\phi} \ar[r]^-{\kappa_\phi} & \widehat{\Pyr{\Omega_\phi}} \\
  \Pyr{V_\phi} \ar@{^{(}->}[u] \ar[r]_-{\Pyr{\nu_\phi}} & \Pyr{\Omega_\phi}\, . \ar@{^{(}->}[u]}
 \end{equation*}
\end{lem}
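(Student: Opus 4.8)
The plan is to exploit that all four corners of the square are posets and all four arrows are monotone maps. Indeed $\Pyr{M_\phi}$, $\Pyr{\Omega_\phi}$ and $\Pyr{V_\phi}$ are twisted arrow posets of posets, while $\widehat{\Pyr{\Omega_\phi}} = \Fun(\Pyr{\Omega_\phi},[1])^{\rm op}$ is a poset by construction, and $\kappa_\phi$, $\Pyr{\nu_\phi}$, the inclusion $\Pyr{V_\phi}\hookrightarrow\Pyr{M_\phi}$ and the embedding $\uparrow$ are all order-preserving. Since a morphism in a poset is determined by its endpoints, a square of monotone maps commutes as soon as it commutes on objects; I would therefore reduce the statement to a pointwise check, dispensing with any coherence or naturality bookkeeping.

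An object of $\Pyr{V_\phi}$ is a morphism of the wide subcategory $V_\phi\subseteq M_\phi$, that is, a vertical interval $[(a,b);(\phi_{b,b'}(a),b')]$ with $b\leq b'\in[k]^{\rm op}$. The bottom-then-right composite sends it by $\Pyr{\nu_\phi}$ to the interval $[([a;a],b);([\phi_{b,b'}(a);\phi_{b,b'}(a)],b')]\in\Pyr{\Omega_\phi}$ and then by $\uparrow$ to the principal up-set on this object. The top composite regards the same vertical interval as an object of $\Pyr{M_\phi}$ and applies $\kappa_\phi$, producing the formal limit $\Pyr{\overline{\kappa}_\phi[(a,b);(\phi_{b,b'}(a),b')]}$.

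The crux, and the one step that I expect to require genuine care, is the computation of $\overline{\kappa}_\phi$ on a vertical interval. By definition $([i;j],c)$ lies in $\overline{\kappa}_\phi[(a,b);(\phi_{b,b'}(a),b')]$ exactly when $b\leq c\leq b'$, $\phi_{b,c}(a)\leq i\leq j$ and $\phi_{c,b'}(j)\leq\phi_{b,b'}(a)$. Since $\phi_{c,b'}$ is order-preserving and $\phi_{b,b'}(a)=\phi_{c,b'}(\phi_{b,c}(a))$, the chain $\phi_{b,c}(a)\leq j$ forces $\phi_{b,b'}(a)=\phi_{c,b'}(\phi_{b,c}(a))\leq\phi_{c,b'}(j)$, and combined with $\phi_{c,b'}(j)\leq\phi_{b,b'}(a)$ this yields the equality $\phi_{c,b'}(j)=\phi_{b,b'}(a)$, pinning $([i;j],c)$ to the diagonal point $([\phi_{b,c}(a);\phi_{b,c}(a)],c)=\nu_\phi(\phi_{b,c}(a),c)$. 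Hence the region is precisely the diagonal chain $\{\nu_\phi(\phi_{b,c}(a),c)\mid b\leq c\leq b'\}$, which is the $\Omega_\phi$-interval $[\nu_\phi(a,b);\nu_\phi(\phi_{b,b'}(a),b')]$.

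With this identification the two composites agree formally. The twisted arrow poset of a nonempty interval has a least object, the interval itself, so $\Pyr{\overline{\kappa}_\phi[(a,b);(\phi_{b,b'}(a),b')]}$ has minimum $[\nu_\phi(a,b);\nu_\phi(\phi_{b,b'}(a),b')]$; and by the universal property of the free completion a diagram possessing a least object is carried to the principal up-set on that object. The formal limit returned by $\kappa_\phi$ is therefore $\uparrow[\nu_\phi(a,b);\nu_\phi(\phi_{b,b'}(a),b')]$, which is exactly $\uparrow\Pyr{\nu_\phi}$ applied to the vertical interval. The square thus commutes on objects, and hence commutes — this is the same mechanism by which Lemma \ref{MasterVert} was deduced from the constancy of the diagrams defining $\overline{\beta}$ along $V_\phi$.
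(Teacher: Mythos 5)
Your reduction to a pointwise check is sound---all four corners of the square are posets and all four maps are monotone, so commutativity on objects suffices---and you have correctly located the crux in the computation of $\overline{\kappa}_\phi$ on a vertical interval. That computation, however, is where the argument breaks. From $\phi_{b,c}(a)\leq i\leq j$ and $\phi_{c,b'}(j)\leq \phi_{b,b'}(a)$ you correctly deduce $\phi_{c,b'}(j)=\phi_{b,b'}(a)=\phi_{c,b'}(\phi_{b,c}(a))$, but the next step, concluding $j=\phi_{b,c}(a)$, silently uses injectivity of $\phi_{c,b'}$. The structure maps of a simplex of $N_k(\Delta^{\rm op})$ are arbitrary morphisms of $\Delta$, in particular may be degeneracies. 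Concretely, take $k=1$ with $\phi(1)=[1]$, $\phi(0)=[0]$ and $\phi_{1,0}\colon[1]\to[0]$ the unique map, and consider the vertical interval $[(0,1);(0,0)]$. The condition $\phi_{1,0}(j)\leq 0$ is then vacuous, so $\overline{\kappa}_\phi[(0,1);(0,0)]$ contains all of $([0;0],1)$, $([0;1],1)$, $([1;1],1)$, $([0;0],0)$---the whole of $\Omega_\phi$---rather than the diagonal chain $\{([0;0],1),([0;0],0)\}$. In particular the degenerate interval $[([0;1],1);([0;1],1)]$ lies in $\Pyr{\overline{\kappa}_\phi[(0,1);(0,0)]}$ but not in $\uparrow[([0;0],1);([0;0],0)]$, since $([0;0],1)\not\leq([0;1],1)$ in $\Omega_\phi$; the two up-sets are therefore distinct elements of $\widehat{\Pyr{\Omega_\phi}}$.

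This is not a presentational gap but a step that fails: with the stated definition of $\overline{\kappa}_\phi$ the two composites disagree on objects whenever some $\phi_{c,b'}$ is non-injective above $\phi_{b,c}(a)$, so no rearrangement of your argument can close it. Your remaining steps---that the diagonal chain is convex in $\Omega_\phi$, and that an up-closed subset with a least element is the principal up-set on that element---are correct, and would finish the proof if the region really were the diagonal chain; equivalently, your argument does establish the lemma when every $\phi_{b,b'}$ is injective, which covers the worked example $([2]\actmorL[1])$ but not a general simplex of $N(\Delta^{\rm op})$. The paper offers no proof beyond the assertion that the verification is trivial, so there is nothing further to compare against; what your attempt exposes is that the verification is not trivial, and that either the definition of $\overline{\kappa}_\phi$ needs an additional condition cutting the region of a vertical interval down to the diagonal, or the lemma must be weakened to the statement actually needed downstream, namely that the comparison map between the two limits becomes an isomorphism after applying $\overline{\beta}(\theta,\gamma)$, in the spirit of Lemma \ref{MasterVert}.
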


\paragraph{The morphism $\beta$ is a symmetric monoidal lax functor.} The diagram in Eq.~\ref{ResRKE} defines a functor
\begin{equation*}
 R'_{(f,\phi)}: \Fun\left(\Singop{f(0)}\times \Pyr{\Omega_\phi}, \tSetfop\right) \to \Fun^{\rm cart}\left( \Cartop{f(0)}\times \Pyr{M_\phi}, \tSetfop \right).
\end{equation*}
We define $\beta(\theta,\gamma)$, for $((f,\phi),\theta,\gamma)\in \Un(\csmBialg)_k$, to be
\begin{equation*}
 \beta(\theta,\gamma):= R'_{(f,\phi)} \overline{\beta}(\theta,\gamma) \in \Unstr{\tSetfop}_k,
\end{equation*}
where $\Unstr{\tSetfop}$ is the sub simplicial set of the unstraightening of $\csmtSpan{\ctSetfop}$ from Proposition \ref{UnstrSimps}.

\begin{lem}
 The assignment $((f,\phi),\theta,\gamma) \mapsto \beta(\theta,\gamma)$ defines a morphism of simplicial sets
 \begin{equation*}
  \beta: \Un\left( \csmBialg\right) \to \Unstr{\tSetfop}.
 \end{equation*}
\end{lem}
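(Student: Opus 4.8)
The maps $\beta$ on each $\Un(\csmBialg)_k$ are already in hand, so the plan is to verify that they are compatible with the simplicial structure maps; that is, writing $\psi^{*}$ for the operator induced by a morphism $\psi\colon[n]\to[k]$ in $\Simp$, to prove
\[
 \psi^{*}\beta(\theta,\gamma)=\beta\bigl(\psi^{*}(\theta,\gamma)\bigr)
\]
for every $\bigl((f,\phi),\theta,\gamma\bigr)\in\Un(\csmBialg)_{k}$. On $\Un(\csmBialg)$ the operator $\psi^{*}$ replaces $(f,\phi)$ by $(f\psi,\phi\psi)$ and forms the evident composites of the families $\theta$ and $\gamma$ along $\psi$; on $\Unstr{\tSetfop}$ it replaces $(f,\phi)$ by $(f\psi,\phi\psi)$ and precomposes a cartesian vertically constant functor with $\Pyr{M(\psi)}$, together with the $\Finpt$-directed functoriality of $\Cart{-}$ coming from the pointed map $f(0)\to f(\psi(0))$. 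By Proposition \ref{UnstrSimps} the latter operator lands again among cartesian vertically constant functors, so both sides of the desired identity are such functors on $\Cartop{(f\psi)(0)}\times\Pyr{M_{\phi\psi}}$.

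Since a cartesian functor is by definition the right Kan extension of its restriction to $\Singop{(f\psi)(0)}\times\Wedge{M_{\phi\psi}}$, it suffices to show that the two functors agree after restriction to this subcategory. The plan is to trace this restriction through the three stages of Eq.~\ref{ResRKE} and to show that each is natural in $\psi$. The master diagram is natural: the squares of Eqs.~\ref{morphm1} and \ref{morphm2} are built functorially out of $(\theta,\gamma)$, so that $\overline{\beta}\bigl(\psi^{*}(\theta,\gamma)\bigr)$ is obtained from $\overline{\beta}(\theta,\gamma)$ by precomposition with $\Pyr{\Omega(\psi)}$. The cone functor is natural by Lemma \ref{ConeNat}, which relates $\kappa_{\phi\psi}$ to $\kappa_{\phi}$ through $M(\psi)$ and $\widehat{\Pyr{\Omega(\psi)}}$. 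Finally, the right Kan extension into the free completion $\widehat{\Pyr{\Omega_{\phi}}}$ merely computes limits, which the induced functor $\widehat{\Pyr{\Omega(\psi)}}$ carries to the corresponding limits. Combining these three facts identifies, on the wedge, the cone-limit $[y;y']\mapsto\lim_{\kappa_{\phi\psi}[y;y']}\overline{\beta}\bigl(\psi^{*}(\theta,\gamma)\bigr)$ computing $\beta\bigl(\psi^{*}(\theta,\gamma)\bigr)$ with the value of $\beta(\theta,\gamma)$ along the image of $\Pyr{M(\psi)}$; here one uses that the nesting property $\overline{\kappa}_{\phi}[x;x']\subset\overline{\kappa}_{\phi}[y;y']$ together with the oplax coherence of Eq.~\ref{KappaOplaxPush} makes the cone-limit functor cartesian along $\Pyr{M_{\phi}}$, so that the final Kan extension changes only the $\Cart{-}$-direction.

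The main obstacle is precisely the interaction of this final right Kan extension with restriction. Because $M(\psi)$ need not carry $\Wedge{M_{\phi\psi}}$ into $\Wedge{M_{\phi}}$, one cannot match wedges directly, and right Kan extensions do not commute with restriction in general. This is resolved by the cartesian reduction above: restriction along $\psi$ preserves cartesian and vertically constant functors --- this is exactly the assertion of Proposition \ref{UnstrSimps} that $\Unstr{\tSetfop}$ is a simplicial set --- so both sides are determined by their restrictions to the wedge, and the previous paragraph identifies those restrictions. The compatibility with the $\Finpt$-directed structure maps is verified in the same fashion, using the functoriality of $\overline{\beta}$ and of $\kappa_{\phi}$ in the $\Cart{-}$ variable. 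Together these show $\psi^{*}\beta(\theta,\gamma)=\beta\bigl(\psi^{*}(\theta,\gamma)\bigr)$, so that $\beta$ is a morphism of simplicial sets.
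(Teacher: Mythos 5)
Your overall strategy --- reduce the simplicial compatibility $\psi^{*}\beta(\theta,\gamma)=\beta\bigl(\psi^{*}(\theta,\gamma)\bigr)$ to naturality of the master diagram and of the cone functor, the latter being Lemma \ref{ConeNat} --- is the same as the paper's, and your treatment of the $\Simpop$-direction is essentially right. Two gaps remain, one minor and one substantive. The minor one: you take for granted that $\beta(\theta,\gamma)$ lies in $\Unstr{\tSetfop}_k$ at all. Cartesianness is indeed automatic from the construction via $R'_{(f,\phi)}$, but vertical constancy is not; the paper derives it from Lemmas \ref{MasterVert} and \ref{ConeVert}, and this verification belongs in the proof.

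The substantive gap: your claim that $\overline{\beta}\bigl(\psi^{*}(\theta,\gamma)\bigr)$ is obtained from $\overline{\beta}(\theta,\gamma)$ by precomposition with $\Pyr{\Omega(\psi)}$ is not correct as stated, and the point where it fails is exactly where the one substantive hypothesis on $\theta$ must be consumed. A simplicial operator $\psi$ acts on the whole simplex over $\cFinDel$, so it also replaces the set $f(0)$ by $f\psi(0)$; a singleton $\{s\}\subset f\psi(0)$ corresponds to the fibre $f_{\psi(0),0}^{-1}(s)\subset f(0)$, which need not be a singleton. Since the master diagram is only defined on $\Singop{-}\times\Pyr{\Omega_{\phi}}$, the two sides can only be compared after establishing
\begin{equation*}
 \overline{\beta}(\psi^{*}\theta,\psi^{*}\gamma)(s,-)=\coprod_{t\in f_{\psi(0),0}^{-1}(s)}\psi^{*}\overline{\beta}(\theta,\gamma)(t,-)
\end{equation*}
as normal oplax functors $\Omega_{\phi\psi}\nrightarrow\sp{\tSetfop}$, and this identity holds precisely because the functors $\theta_i(-,x)\colon\Cart{f(i)}\to\Alg$ are required to be cocartesian (condition 2 of Definition \ref{BialgDefn}). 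You dismiss the $\Finpt$-direction as ``verified in the same fashion,'' but it is not of the same nature as the $\Simpop$-direction: it is the only place where a defining condition of $\csmBialg$ enters, and your argument never invokes it. The paper handles this by interposing the full subcategory $\overline{\Singop{f(0)}}\subset\Cartop{f(0)}$ spanned by the singletons together with the fibres $f_{\psi(0),0}^{-1}(s)$, over which restriction along $\psi$ and the right Kan extension $R'$ can be compared.
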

\begin{proof}
 Fix $((f,\phi),\theta,\gamma) \in \Un\left( \csmBialg\right)$ and $\psi:[n] \to [k]$. The functor $\beta(\theta,\gamma)$ is cartesian by construction and Lemmas \ref{ConeVert} and \ref{MasterVert} imply that it is vertically constant.

 It remains to show that the functors $\psi^*\beta(\theta,\gamma)$ and $\beta(\psi^* \theta, \psi^* \gamma)$ agree, where 
 \begin{equation*}
  (\psi^* \theta)_i = (f_{\psi(i),i}, \phi_{\psi(i),i)})_* \theta_i \ \  {\rm and} \ \ (\psi^* \gamma)_i = (f_{\psi(i+1),i+1}, \phi_{\psi(i+1),i+1})_* \gamma_i. 
 \end{equation*}
 
Let $\overline{\Singop{f(0)}}$ denote the full subcategory of $\Cartop{f(0)}$ containing $\Singop{f(0)}$ as well as those $U \subset f(0)$ such that $U \subset f_{\psi(0),0}^{-1}(s)$ for some $s \in f\psi(0)$. Then by Lemma \ref{ConeNat} the following diagram commutes
\begin{equation*}
 \xymatrixrowsep{.9pc} \xymatrix{\Fun^{\rm cart}\left(\Cartop{f(0)}\times\Pyr{M_\phi},\tSetfop\right) \ar[r]^{\psi^*} & \Fun^{\rm cart}\left(\Cartop{f\psi(0)}\times\Pyr{M_{\phi\psi}},\tSetfop\right) \\
 \Fun\left( \overline{\Singop{f(0)}}\times \Pyr{\Omega_\phi}, \tSetfop\right) \ar[u]^-{R'_{(f,\phi)}} \ar[r]_-{\psi^*} &  \Fun \left(\Singop{f\psi(0)}\times \Pyr{\Omega_{\phi\psi}}, \tSetfop\right) \ar[u]_-{R'_{(f,\phi)}} }
\end{equation*}
To prove the Lemma it suffices to show that for each $s \in f \psi(0)$,
\begin{equation*}
 \overline{\beta}(\psi^* \theta, \psi^* \gamma)( s, -) = \coprod_{t\in f^{-1}_{\psi(0),0}(s)} \psi^*\overline{\beta}(\theta,\gamma)(t,-)
\end{equation*}
as normal oplax functors $\Omega_{\phi \psi} \nrightarrow \sp{\tSetfop}$. This is the case as, by definition, the functors $\theta_i$ are cocartesian.
\end{proof}

We have therefore constructed a morphism of fibrations
\begin{equation*}
  \xymatrixrowsep{.8pc}\xymatrixcolsep{.9pc}\xymatrix{\Un(\csmBialg)\ar[rr]^-{\beta} \ar[rd] & & \csmtSpan{\ctSetfop} \ar[ld] \\
  & \cFinDel & }
 \end{equation*}
 such that the image of $\underline{1}$ is $\Simplex{1,1}$. To show that $\beta$ endows the standard $(1,1)$-simplex $\Simplex{1,1}$ with the structure of a totally lax bialgebra it remains only to show that $\beta$ is a symmetric monoidal lax functor.

\begin{prop}
\label{TotLaxComb}
 The morphism of simplicial sets $\beta$ is a symmetric monoidal lax functor
 \begin{equation*}
  \beta: \csmBialg \rightsquigarrow \csmtSpan{\ctSetfop}.
 \end{equation*}
\end{prop}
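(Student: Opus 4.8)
The preceding lemma already exhibits $\beta$ as a morphism of simplicial sets over $\cFinDel$, hence as a map of cocartesian fibrations $\Un(\csmBialg)\to\Un(\csmtSpan{\ctSetfop})$ commuting with the projections. By Definition \ref{DefnsmLax} the only remaining point is that $\beta$ carries cocartesian lifts of morphisms in $\cFinpt\times\Simpin^\op$ to cocartesian morphisms. My plan is to first reduce to two generating cases and then dispatch each. Since $\csmBialg$ takes values in nerves of groupoids, the projection $\Un(\csmBialg)\to\cFinDel$ is a left fibration, so every edge of the source is cocartesian and the cocartesian lift of a base morphism with prescribed source is essentially unique. As cocartesian edges compose, $\beta$ preserves composition, and every morphism of $\cFinpt\times\Simpin^\op$ factors as one of $\cFinpt$ (with identity simplicial component) followed by one of $\Simpin^\op$ (with identity finite-set component), it suffices to treat these two classes separately.

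For the finite-set direction I would fix $f\colon S_*\to T_*$ in $\cFinpt$. The cocartesian lift at a datum $\theta$ is the pushforward $f_*\theta$, computed using that each $\theta(-,x)\colon\Cart{S}\to\Alg$ is cocartesian (Definition \ref{BialgDefn}(2)); on the bispan side the cocartesian lift over $f$ is the symmetric-monoidal comparison, which under the identification $\beta(X)=\coprod_{x\in X}\Simplex{1,1}$ is the evident disjoint-union map. That $\beta(f_*\theta,f_*\gamma)$ agrees with this lift is precisely the cocartesianness computation already carried out in the proof of the preceding lemma, where the identity $\overline{\beta}(\psi^*\theta,\psi^*\gamma)(s,-)=\coprod_{t}\psi^*\overline{\beta}(\theta,\gamma)(t,-)$ was established from the cocartesianness of the $\theta_i$ in the finite-set variable. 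The same identity, specialised to the single morphism $f$, shows that $\beta$ preserves the finite-set cocartesian lifts, and hence that $\beta$ is symmetric monoidal.

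For the inert simplicial direction I would let $\phi$ be inert in $\Simpop$, corresponding to an interval inclusion in $\Delta$, with identity finite-set component. Its cocartesian lift at $\theta\colon\Pyrnc{n}\to\Alg$ is the restriction $\theta\circ\Pyrnc{\phi}$, while on the bispan side the cocartesian lift restricts a cartesian, vertically constant functor along the inclusion of the corresponding sub-pyramid of $\Pyr{M_\phi}$. I would verify that these coincide by unwinding the construction in Eq.~\ref{ResRKE}: the master diagram $\overline{\beta}(\theta,\gamma)$ and the cone functor $\kappa_\phi$ are natural in the simplicial variable, and the naturality of $\kappa_\phi$ recorded in Lemma \ref{ConeNat}, together with the compatibility of $\Omega_{(-)}$ with the maps $\Omega(-)$, guarantees that restricting the master diagram to the subregion of $\Omega_\phi$ cut out by the interval reproduces the master diagram of the restricted datum. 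Since right Kan extension commutes with restriction to such a closed subregion, the sequence of Kan extensions and restrictions defining $\beta(\theta,\gamma)$ restricts to the one defining $\beta(\theta\circ\Pyrnc{\phi})$, whence the inert cocartesian lift is preserved.

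The principal obstacle will be this second case: one must match the abstract cocartesian lifts of inert simplicial morphisms in $\csmtSpan{\ctSetfop}$ with the concrete output of $\beta$, and in particular confirm that the right Kan extensions of Eq.~\ref{ResRKE} commute with restriction to the subregion of $\Omega_\phi$ determined by the interval. This is exactly where the naturality of the cone functor (Lemma \ref{ConeNat}) and the vertical-constancy statements (Lemmas \ref{MasterVert} and \ref{ConeVert}) do the essential work, since vertical constancy is what pins down the values of $\beta(\theta,\gamma)$ on the faces of $\Pyr{M_\phi}$ that the restriction must match. By contrast, the finite-set case is little more than the symmetric-monoidal bookkeeping already performed for the finite-set variable, so I expect it to require no new input beyond the preceding lemma.
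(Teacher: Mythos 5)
Your overall strategy --- reduce the statement to the preservation of cocartesian lifts of morphisms in $\cFinpt\times\Simpin^\op$, and use that $\Un(\csmBialg)\to\cFinDel$ is a left fibration with groupoid fibres to organise the check --- is sound, and your treatment of the finite-set direction via the coproduct formula $\overline{\beta}(\psi^*\theta,\psi^*\gamma)(s,-)=\coprod_t\psi^*\overline{\beta}(\theta,\gamma)(t,-)$ is essentially where the paper also locates that input (though the paper does not split the two directions: it fixes $(f,\phi)$ with $\phi$ inert and $f$ arbitrary and treats them at once). The gap is in the inert simplicial case, which you correctly identify as the crux but then attack with the wrong tool. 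By the criterion the paper invokes (\cite{HTT} 3.2.5.2), the edge $\beta(\theta,\gamma)$ over $(f,\phi)$ is cocartesian iff the induced $1$-simplex $\beta(\theta,\gamma)_{1,[1]}$ in the target fibre $\Sp_{f(1),\phi(1)}(\ctSetfop)$ --- the comparison morphism from the pushforward of the source to the target --- is an \emph{equivalence}. Your argument (naturality of $\kappa_\phi$ from Lemma \ref{ConeNat}, commutation of right Kan extension with restriction to the subregion of $\Omega_\phi$) shows that the restriction of $\beta(\theta,\gamma)$ along the target-fibre inclusion has the correct \emph{target}, i.e.\ that $\beta$ is compatible with the simplicial structure maps; but that is the content of the preceding lemma, not of cocartesianness. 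Two edges with the same source and the correct target over the same base morphism need not both be cocartesian: over an \emph{active} $\phi$ the same restriction-compatibility holds and the edge is decidedly not cocartesian --- that failure is the entire reason $\beta$ is only lax.

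What is actually needed, and what the paper supplies, is an explicit identification of the comparison $1$-simplex showing all of its structure maps are invertible. The paper writes an inert $\phi$ as the inclusion of a wedge $\bigvee_{i}[1]$ into $[a]\vee\bigl(\bigvee_i[1]\bigr)\vee[b]$, decomposes $M_\phi$ and hence $\beta(\theta,\gamma)_{1,[1]}$ into the corresponding summands, and thereby reduces to $\phi=\id$; in that case (Example \ref{MasterOne}) the comparison $1$-simplex is a diagram whose connecting maps are induced by the natural isomorphisms $\gamma_i$, hence are equivalences. This is exactly where the groupoidal nature of $\Bialg_{S,n}$ enters on the target side and where inertness is used to exclude the non-invertible oplax-structure maps $\overline{\fB}_{g,f}$. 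Without some version of this computation --- an argument that the comparison morphism, not merely the target vertex, is invertible --- your proof does not close.
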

\begin{proof}
 Fix $((f,\phi),\theta,\gamma)$ with $\phi:[m] \inmorL [n]$ inert. We must show that the induced functor
 \begin{equation*}
  \beta(\theta,\gamma)_{1,[1]}: \xymatrix{\Cartop{f(1)}\times \Pyr{\phi(1),1} \ar[r] & \Cartop{f(0)}\times \Pyr{M_\phi} \ar[r]^-{\beta} & \tSetfop}
 \end{equation*}
 is an equivalence in the quasicategory $\Sp_{f(1),\phi(1)}(\ctSetfop)$ (\cite{HTT} 3.2.5.2).
 
 As every inert morphism can be written as
 \begin{equation*}
  \phi:\xymatrixcolsep{1.2pc}\xymatrix{ [a] \vee \left( \displaystyle \bigvee_{i=1}^n [1] \right) \vee [b] & \ar@{>->}[l] \displaystyle \bigvee_{i=1}^n [1]\, ,}
 \end{equation*}
the poset $M_\phi$ is of the form
\begin{equation*}
  \includegraphics[width=11cm]{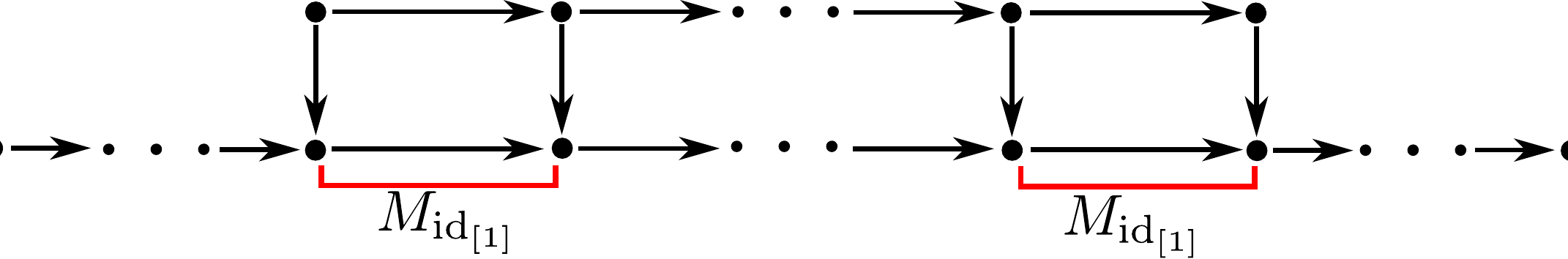}
 \end{equation*}
Let $(\theta(i),\gamma(i))$ denote the restriction of $(\theta,\gamma)$ to the $i$'th summand of $[n]$. Then for each $U \in \Cartop{f(1)}$, the functor $\beta(\theta,\gamma)_{1,[1]}(U,-)$ is the right Kan extension of a diagram of the following form in $\tSetfop$:
\begin{equation*}
  \includegraphics[height=2.25cm]{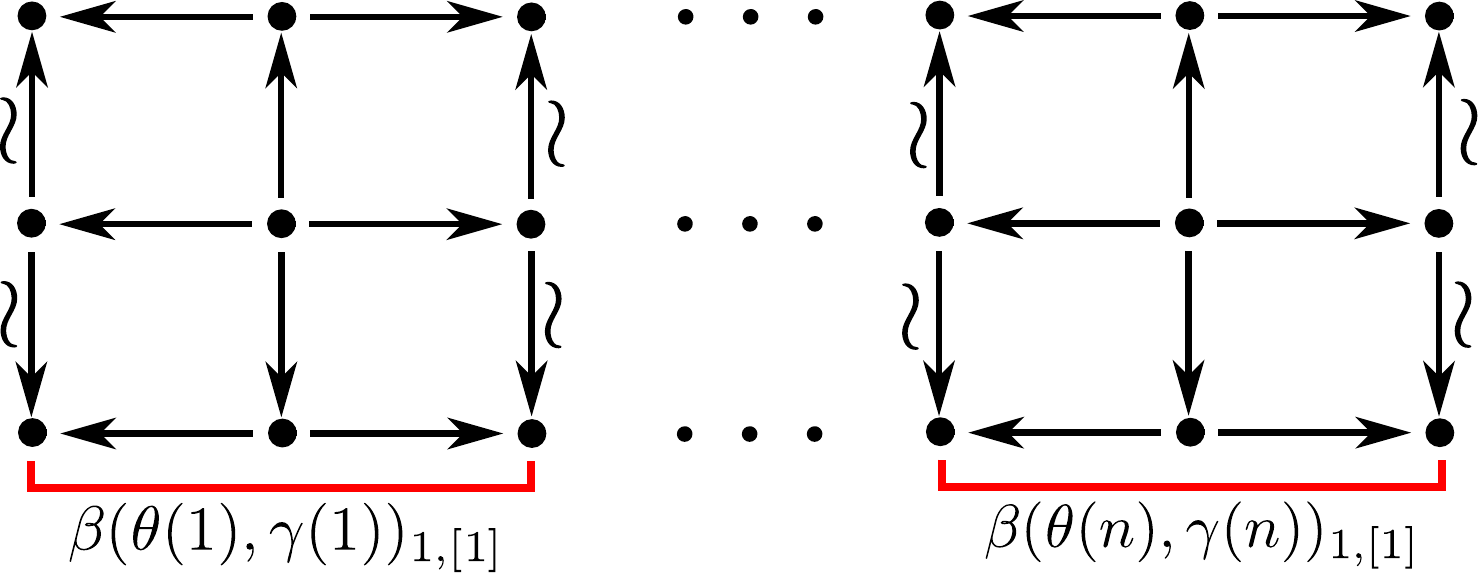}
 \end{equation*}
We conclude that $\beta(\theta,\gamma)_{1,[1]}$ is an equivalence if and only if $\beta(\theta(i),\gamma(i))_{1,[1]}$ is an equivalence for each $i$. 

It therefore suffices to consider the case when $\phi$ is the identity morphism. This is exactly the case considered above in Example \ref{MasterOne}. For each $U \in \Cartop{f(1)}$, the functor $\beta(\theta,\gamma)_{1,[1]}(U,-)$ is 
\begin{equation*}
 \xymatrixrowsep{.85pc} \xymatrixcolsep{1.2pc} \xymatrix{ \beta(X_0) \ar[d]^-\wr \ar[r] & \beta(p_0) \coprod_{\beta(Y)} \beta(p_1) \ar[d]^-\wr & \beta(X_1) \ar[l] \ar[d]^-\wr \\
 \beta(\gamma_0) \ar[r] & \beta(\gamma_0 p_0) \coprod_{\beta(\gamma)} \beta(\gamma_1 p_1) & \beta(\gamma_1) \ar[l] \\
 \beta(X_0') \ar[u]_-\wr \ar[r] & \beta(p_0') \coprod_{\beta(Y')} \beta(p_1') \ar[u]_-\wr & \beta(X_1') \ar[l] \ar[u]_-\wr }
\end{equation*}
and so $\beta(\theta,\gamma)_{1,[1]}$ is an equivalence.
\end{proof}

\subsection{The totally lax bialgebra structure inherited from \texorpdfstring{$\Simplex{1,1}$}{the standard (1,1)-simplex}}
\label{Sec:intospacesBi}

Let $\cX \in \cC_{\Simp^2}$ be a bisimplicial object in an $\infty$-category having finite limits. We can now show how the object $\cX_{1,1}$ inherits the structure of a totally lax bialgebra from the one endowed upon $\Simplex{1,1}$ in Proposition \ref{TotLaxComb}. 

Recall that any bisimplicial object $\cX$ defines a finite limit preserving functor $\cX:\ctSetfop \to \cC$  by right Kan extension,
\begin{equation*}
 \xymatrixrowsep{1.1pc} \xymatrix{ \ctSetfop \ar[r]^-\cX & \cC \\
 (\Simpop)^2 \ar@{^{(}->}[u] \ar[ru]_-\cX & }
\end{equation*}
Li-Bland has shown (\cite{LiBland} 4.1) that every finite limit preserving functor $F: \cD \to \cD'$ between $\infty$-categories having finite limits induces a symmetric monoidal functor $F: \csmtSpan{\cD} \to \csmtSpan{\cD'}$. One therefore has a symmetric monoidal functor
\begin{equation*}
 \cX: \csmtSpan{\ctSetfop} \to \csmtSpan{\cC}.
\end{equation*}

Then the following is a direct corollary of Proposition \ref{TotLaxComb}.
 \begin{thm}
 \label{TotLaxMain}
  Let $\cC$ be an $\infty$-category with finite limits and $\cX \in \cC_{\Simp^2}$ be a bisimplicial object. Then the composite
  \begin{equation*}
   \beta_\cX: \xymatrix{ \csmBialg \ar@{~>}[r]^-\beta & \csmtSpan{\ctSetfop} \ar[r]^-{\cX} & \csmtSpan{\cC}}
  \end{equation*}
is a symmetric monoidal lax functor which endows $\cX_{1,1}$ with the structure of a totally lax bialgebra.
 \end{thm}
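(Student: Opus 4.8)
The plan is to treat the statement as what it is advertised to be: a formal consequence of Proposition \ref{TotLaxComb} together with the observation that $\beta_{\cX}$ is the composite of the symmetric monoidal lax functor $\beta$ with the symmetric monoidal (non-lax) functor $\cX$. By Definition \ref{DefnLaxBialg}, a totally lax bialgebra object in $\csmtSpan{\cC}$ is by definition nothing more than a symmetric monoidal lax functor $\csmBialg \rightsquigarrow \csmtSpan{\cC}$, so it suffices to verify two things: that $\beta_{\cX}$ is such a functor, and that it sends $\underline 1$ to $\cX_{1,1}$.

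First I would recall the two inputs in their unstraightened form. By Li-Bland's result \cite{LiBland}, the finite-limit-preserving right Kan extension $\cX \colon \ctSetfop \to \cC$ induces a symmetric monoidal functor $\cX \colon \csmtSpan{\ctSetfop} \to \csmtSpan{\cC}$; since this is a genuine (non-lax) symmetric monoidal functor, after unstraightening it is a morphism of fibrations over $\cFinDel$ that preserves the cocartesian lifts of \emph{all} morphisms of $\cFinDel$. Proposition \ref{TotLaxComb}, on the other hand, provides $\beta$ as a morphism of fibrations over $\cFinDel$ preserving the cocartesian lifts of morphisms in $\cFinpt \times \Simpin^\op$, as required by Definition \ref{DefnsmLax}.

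The key step is then the composition. The composite $\cX \circ \beta = \beta_{\cX}$ is again a morphism of fibrations over $\cFinDel$, so I need only check the cocartesian-lift condition of Definition \ref{DefnsmLax}. Given a morphism in $\cFinpt \times \Simpin^\op$ and a cocartesian lift $\tilde\sigma$ in $\Un(\csmBialg)$, the lax functor $\beta$ sends $\tilde\sigma$ to a cocartesian morphism of $\Un(\csmtSpan{\ctSetfop})$, and $\cX$, which preserves all cocartesian morphisms, sends that in turn to a cocartesian morphism of $\Un(\csmtSpan{\cC})$. Hence $\beta_{\cX}$ preserves the cocartesian lifts of morphisms in $\cFinpt \times \Simpin^\op$ and is a symmetric monoidal lax functor. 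Finally I would identify the underlying object: Proposition \ref{TotLaxComb} gives $\beta(\underline 1) = \Simplex{1,1}$, and by construction of the right Kan extension $\cX$ one has $\cX(\Simplex{1,1}) = \cX_{1,1}$, so $\beta_{\cX}(\underline 1) = \cX_{1,1}$ and Definition \ref{DefnLaxBialg} delivers the totally lax bialgebra structure.

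I do not anticipate a genuine obstacle here, since all of the real work was carried out in constructing $\beta$ and proving Proposition \ref{TotLaxComb}; the only point requiring care is the bookkeeping in the composition step, namely checking that the preservation of inert-direction cocartesian lifts by $\beta$, combined with the preservation of \emph{all} cocartesian lifts by the strong symmetric monoidal $\cX$, is exactly enough to guarantee that the composite meets the weaker lax condition. This is the step I would write out most explicitly, as it is the one place where the distinction between lax and strong symmetric monoidal functors is doing any work.
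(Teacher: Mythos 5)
Your proposal is correct and matches the paper's treatment: the paper states Theorem \ref{TotLaxMain} as a direct corollary of Proposition \ref{TotLaxComb} combined with Li-Bland's result that a finite-limit-preserving functor induces a symmetric monoidal functor on bispans, which is precisely the composition argument you spell out. The one step you flag for care --- that a strong symmetric monoidal functor preserves all cocartesian lifts and hence the composite still preserves the inert-direction lifts required by Definition \ref{DefnsmLax} --- is exactly the bookkeeping the paper leaves implicit, and your writing it out is a faithful expansion rather than a deviation.
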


\section{The universal Hall bialgebra}
\label{Sec:SegbiAssoc}

Having equipped the object of $(1,1)$-simplices $\cX_{1,1}$ in a bisimplicial object $\cX \in \cC_{\Simp^2}$ with a totally lax bialgebra structure in Theorem \ref{TotLaxMain}, it is now time to show that the double $2$-Segal condition enforces the (co)associativity of the (co)product. This will provide our definition of the universal Hall bialgebra of a double $2$-Segal object.

By Definition \ref{Defn:2Seg} a simplicial object $\cY \in \cC_\Simp$ is $2$-Segal if and only if the composite
 \begin{equation*}
  \alpha_\cY: \xymatrix{\csmAlg \ar@{~>}[r]^-\alpha & \csmtSpan{\csSetfop} \ar[r]^-{\cY} & \csmtSpan\cC}
 \end{equation*}
is a symmetric monoidal functor. We leverage this to deduce the relation between the double $2$-Segal and (co)associativity by proving that the lax algebra structure on $\cX_{1,1}$ is induced by the simplicial object $\cX_{\bullet,1}$ and the lax coalgebra structure is induced by $\cX_{1,\bullet}$.

 \begin{lem}
 \label{BetaAlphaComp}
  The totally lax bialgebra structure on $\Simplex{1,1}$ is compatible with the lax algebra and coalgebra structures on $\Simplex 1$. That is, the following diagram commutes
  \begin{equation*}
  \xymatrixrowsep{.9pc} \xymatrix{\csmAlg\ar@{~>}[r]^-{\alpha} \ar[d]_-{\iota_a} & \csmtSpan{\csSetfop} \ar[d]^-{(-)^h} \\
  \csmBialg \ar@{~>}[r]_-{\beta} & \csmtSpan{\ctSetfop} \\
  \csmCoalg \ar[u]^-{\iota_c} \ar@{~>}[r]_-\chi  & \csmtSpan{\csSetfop} \ar[u]_-{(-)^v} }
 \end{equation*}
 \end{lem}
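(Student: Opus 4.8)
The plan is to treat the two squares separately and to reduce the lower one to the upper by symmetry. The entire apparatus producing $\beta$ --- the posets $M_\phi$ and $\Omega_\phi$, the master diagram $\overline{\beta}$, the cone functor $\kappa_\phi$, and the chain of restrictions and right Kan extensions in Eq.~\ref{ResRKE} --- is built equivariantly for the operation that transposes the two simplicial directions together with the interval reversal $[i;j]\mapsto[-j;-i]$. This operation transposes bisimplicial sets (so that $\Simplex{n,m}\mapsto\Simplex{m,n}$) and exchanges the two endpoint projections out of $\Pyrnc{\bullet}$, hence carries $\iota_a$ to $\iota_c$, $\alpha$ to $\chi$, and $(-)^h$ to $(-)^v$. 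Granting this, it suffices to prove that the upper square commutes, i.e. that $\beta\circ\iota_a=(-)^h\circ\alpha$ as symmetric monoidal lax functors $\csmAlg\rightsquigarrow\csmtSpan{\ctSetfop}$.

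Both composites are maps of simplicial sets $\Un(\csmAlg)\to\Unstr{\tSetfop}$ over $\cFinDel$, and since the lax-functor conditions are properties rather than structure, it is enough to check that they agree on each $k$-simplex (using the explicit model of Proposition~\ref{UnstrSimps}). Fix a $k$-simplex of $\Un(\csmAlg)$ and let $((f,\phi),\theta,\gamma)$ be its image under $\iota_a$. By definition of $\iota_a$ each $\theta_i\colon\Cart{f(i)}\times\Pyrnc{\phi(i)}\to\Alg$ is an algebra datum precomposed with the endpoint projection $\Pyrnc{\phi(i)}\to[\phi(i)]$; thus $\theta_i[a;b]$ depends only on the retained endpoint of $[a;b]$.

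Feeding this into the construction of the master diagram, every square of the form Eq.~\ref{morphm1} becomes degenerate in the collapsed direction: the pair of parallel edges complementary to the surviving endpoint is sent by $\theta$ to identities, so the resulting pseudo-pullback in $\Alg$ has $p_1$ or $q_2$ an identity. Consequently the attached bi-$\nabla$ set $\coprod_{y}\Simplexn{p_1^{-1}(y),p_2^{-1}(y)}$ has a singleton in the collapsed argument and is therefore of the form $A\boxtimes\Simplexn{\langle 1\rangle}$, with $A$ the single-variable $\nabla$-set occurring in the corresponding term of $\alpha$'s master diagram. Applying $\fG^*$ gives $\fG^*(A)\boxtimes\Simplex 1=(-)^h$ of that term, and the same degeneration propagates through the larger grids of pseudo-pullbacks (Eq.~\ref{morphm2}) governing the oplax structure. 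Hence $\overline{\beta}(\iota_a\theta,\iota_a\gamma)$ is constant along the entire collapsed direction of $\Omega_\phi$ --- not merely along $V_\phi$, as Lemma~\ref{MasterVert} guarantees in general --- so it factors through the projection onto the surviving horizontal poset and there equals $(-)^h$ applied to the master diagram of $\alpha$. Because $\kappa_\phi$ and the operations assembling $R'_{(f,\phi)}$ in Eq.~\ref{ResRKE} depend only on $(f,\phi)$, and because $(-)^h\colon\csSetfop\to\ctSetfop$ preserves finite limits and so commutes with the pointwise right Kan extensions along the finite posets $\widehat{\Pyr{\Omega_\phi}}$ and $\Cartop{f(0)}\times\Pyr{M_\phi}$, we may pull $(-)^h$ past the whole procedure. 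This yields $\beta(\iota_a\theta,\iota_a\gamma)=(-)^h(\alpha(\theta,\gamma))$, and naturality in $k$ gives equality of the two maps.

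The main obstacle is bookkeeping rather than conceptual. One must fix precisely the identification of the surviving horizontal part of $\Omega_\phi$ with the indexing poset used for the master diagram of $\alpha$ in \cite{penney2017simplicial}, and verify term by term that the degenerated pseudo-pullback grids recover, after $\fG^*$ and $(-)^h$, exactly the pushout diagrams defining $\alpha$'s lax structure. Once this combinatorial dictionary is in place, the equality of the two lax functors follows formally from the finite-limit-preservation of $(-)^h$ and the $\theta$-independence of $\kappa_\phi$ and $R'_{(f,\phi)}$; the dual statement for the lower square is then immediate from the symmetry invoked at the outset.
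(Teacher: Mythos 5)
Your proposal follows essentially the same route as the paper's proof: the paper likewise treats only the top square (deferring the bottom to symmetry), observes that under $\iota_a$ the squares in Eq.~\ref{morphm1} and Eq.~\ref{morphm2} degenerate so that the master diagram $\overline{\beta}(\iota_a(\theta))$ is the pullback along a projection $\rho_\phi\colon \Pyr{\Omega_\phi}\to\Pyr{\phi(0)}$ of $(-)^h$ applied to $\alpha$'s master diagram, and then checks that the cone/Kan-extension machinery is compatible with that projection. The one substantive point you file under ``bookkeeping'' is the identity $\hat{\rho}_\phi\circ\kappa_\phi=\Pyr{p_\phi}$, which the paper verifies via an explicit minimum formula showing each projected cone region collapses to the expected interval --- this is what makes the limits in $\beta$'s procedure reduce to evaluations and is the only step not already formal.
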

\begin{proof}
We shall only prove that the top square commutes, as the proof for the bottom square is essentially identical.

 Fix $((f,\phi),\theta) \in \Un(\csmAlg)_k$. We must show that $\alpha(\theta)^h = \beta(\iota_a(\theta))$, where $\iota_a(\theta)$ is composed of the functors
 \begin{equation*}
  \xymatrix{\Cart{f(i)} \times \Pyrnc{\phi(i)} \ar[r] & \Cart{f(i)} \times \phi(i) \ar[r]& \Cart{f(0)} \times \phi(0) \ar[r]^-{\theta} & \Alg}
 \end{equation*}
and identity natural transformations between them. 

From the natural transformation $\Pyr{\bullet} \Rightarrow [\bullet]$ one has a functor $\Omega_\phi \to M_\phi$, and hence a functor
\begin{equation*}
 \rho_\phi: \xymatrixcolsep{1.2pc} \xymatrix{ \Pyr{\Omega_\phi} \ar[r]& \Pyr{M_\phi} \ar[r]^-{\Pyr{p_\phi}} & \Pyr{\phi(0)}}.
\end{equation*}
Consider the diagram
\begin{equation*}
 \xymatrixrowsep{.9pc} \xymatrix{ \Pyr{\phi(0)} & \ar[l]_-{\Pyr{p_\phi}} \Wedge{M_\phi} \ar[d]^-{\kappa_\phi} \\
 \Pyr{\Omega_\phi} \ar[u]^-{\rho_\phi} \ar@{^{(}->}[r]_-{\uparrow} & \widehat{\Pyr{\Omega_\phi}} \ar[lu]_-{\hat{\rho}_\phi}\, . }
\end{equation*}
The bottom triangle commutes by definition, and the upper triangle commutes since for any interval $[(a,b);(a',b')]$ in $M_\phi$,
\begin{equation*}
 [\phi_{b,0}(a); \phi_{b',0}(a')] = \min \left\{ [\phi_{c,0}(i); \phi_{c',0}(i')] \, | \, b \leq c \leq c' \leq b', \, \phi_{b,c}(a) \leq i, \, \phi_{b,c'}(a')\leq i' \right\}.
\end{equation*}
It follows that the following diagram commutes
\begin{equation*}
 \xymatrixrowsep{.9pc} \xymatrix{ \Fun\left(\Singop{f(0)}\times \Pyr{\phi(0)},\tSetfop \right) \ar[r]^-{(\Pyr{p_\phi})^*} \ar[d]_-{\rho_\phi^*} & \Fun\left(\Singop{f(0)}\times\Wedge{M_\phi},\tSetfop\right) \\
 \Fun\left(\Singop{f(0)}\times \Pyr{\Omega_\phi}, \tSetfop\right) \ar[r]^-{\hat{(- )}} & \Fun\left(\Singop{f(0)}\times\widehat{\Pyr{\Omega_\phi}},\tSetfop\right) \ar[u]_-{\kappa_\phi^*} }
\end{equation*}
since
\begin{equation*}
 \kappa_\phi^* \hat{(-)} \rho_\phi^* = \kappa_\phi^* \hat{(- )} \uparrow^* \hat{\rho}_\phi^* = \kappa^*_\phi \hat{\rho}_\phi^* = (\Pyr{p_\phi})^*.
\end{equation*}

To show that $\alpha(\theta)^h = \beta(\iota_a(\theta))$ it therefore suffices to show that 
\begin{equation*}
 \xymatrixrowsep{.9pc} \xymatrix{ \Singop{f(0)}\times \Omega_\phi \ar[r]^-{\overline{\beta}(\iota_a(\theta))} \ar[d] & \Sp{\tSetfop} \\
 \Singop{f(0)}\times\phi(0) \ar[r]_-{\overline{\alpha}(\theta)} & \Sp{\sSetfop} \ar[u]_-{(-)^h} }
\end{equation*}
where the horizontal maps are normal oplax functors. The diagram commutes on objects, as given an object $(s,[i;j],b)$ one has
\begin{equation*}
 \iota_a(\theta)_b(s,[i;j]) \cdot \Simplex{1,1} = \theta(\phi_{b,0}(i))\cdot \Simplex{1,1}.
\end{equation*}
They agree on morphisms as for a morphism $(s,[i;j],b) \to (s,[i';j'],b')$, the image under $\iota_a(\theta)_{b'}$ of the diagram in Eq.~\ref{morphm1} is
\begin{equation*}
 \xymatrixrowsep{.9pc} \xymatrix{ \theta(\phi_{b,0}(i)) \ar@{=}[d] \ar[r] & \theta(\phi_{b',0}(i')) \ar@{=}[d] \\
 \theta(\phi_{b,0}(i)) \ar[r] & \theta(\phi_{b',0}(i'))\, .}
\end{equation*}
Similarly, for a pair of composable morphisms $(s,[i;j],b) \to (s,[i';j'],b')\to(s,[i'';j''],b'')$ the image under $\iota_a(\theta)_{b''}$ of the diagram in Eq.~\ref{morphm2} is
\begin{equation*}
 \xymatrixrowsep{.6pc} \xymatrix{ \theta(\phi_{b,0}(i)) \ar@{=}[d] \ar[r] & \theta(\phi_{b',0}(i')) \ar@{=}[d] \ar[r] & \theta(\phi_{b'',0}(i'')) \ar@{=}[d] \\
 \theta(\phi_{b,0}(i)) \ar@{=}[d] \ar[r] & \theta(\phi_{b',0}(i')) \ar@{=}[d] \ar[r] & \theta(\phi_{b'',0}(i'')) \ar@{=}[d] \\
 \theta(\phi_{b,0}(i))  \ar[r] & \theta(\phi_{b',0}(i'))  \ar[r] & \theta(\phi_{b'',0}(i''))  }
\end{equation*}
showing that the diagram commutes on the level of oplax structures.
\end{proof}

\begin{thm}
\label{UnivBialg}
 Let $\cX\in \cC_{\Simp^2}$ be a bisimplicial object in an $\infty$-category having finite limits. If $\cX$ is a double $2$-Segal object then the symmetric monoidal lax functor $\beta_\cX$ endows $\cX_{1,1}$ with a lax bialgebra structure.
\end{thm}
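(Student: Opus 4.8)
The plan is to start from Theorem~\ref{TotLaxMain}, which already provides the symmetric monoidal lax functor $\beta_\cX: \csmBialg \rightsquigarrow \csmtSpan{\cC}$ exhibiting $\cX_{1,1}$ as a \emph{totally} lax bialgebra. By Definition~\ref{DefnLaxBialg}, upgrading this to a lax bialgebra requires only one further check: that the two restrictions $\beta_\cX \circ \iota_a$ and $\beta_\cX \circ \iota_c$ are honest (non-lax) symmetric monoidal functors out of $\csmAlg$ and $\csmCoalg$. My strategy is to identify each of these restrictions with the universal Hall algebra, respectively coalgebra, functor associated to one of the slice simplicial objects $\cX_{\bullet,1}$ and $\cX_{1,\bullet}$, and then to invoke the $2$-Segal condition in the form of Definition~\ref{Defn:2Seg}.

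The first step is to transport the compatibility squares of Lemma~\ref{BetaAlphaComp} along $\cX$. Postcomposing the top square with the symmetric monoidal functor $\cX: \csmtSpan{\ctSetfop} \to \csmtSpan{\cC}$ gives $\beta_\cX \circ \iota_a = \cX \circ (-)^h \circ \alpha$. The key observation is then the identification of $\cX \circ (-)^h$ as a functor $\csSetfop \to \cC$. Both $(-)^h$ and $\cX$ are finite limit preserving, being the finite-limit extensions of $\Simplex{\bullet,1}$ and of $\cX$ respectively, so their composite is finite limit preserving and sends each representable $\Simplex n$ to $\cX_{n,1}$. Since $\csSetfop$ carries the initial simplicial object, a finite limit preserving functor out of it is determined by its restriction along $\Simpop \hookrightarrow \csSetfop$; hence $\cX \circ (-)^h$ coincides with the finite-limit extension of the simplicial object $\cX_{\bullet,1}$. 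By the functoriality of Li-Bland's $\csmtSpan{-}$ construction this identification persists after passing to bispans, whence $\beta_\cX \circ \iota_a = \alpha_{\cX_{\bullet,1}}$.

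With this in hand the conclusion is immediate. Because $\cX$ is a double $2$-Segal object in the sense of Definition~\ref{Defn:double2Seg}, both slices $\cX_{\bullet,1}$ and $\cX_{1,\bullet}$ are $2$-Segal objects of $\cC_\Simp$. By Definition~\ref{Defn:2Seg} this means precisely that $\alpha_{\cX_{\bullet,1}}$ is a symmetric monoidal functor, so $\beta_\cX \circ \iota_a$ is symmetric monoidal. Running the same argument through the bottom square of Lemma~\ref{BetaAlphaComp}, with $(-)^v$ in place of $(-)^h$ and $\chi$ in place of $\alpha$, identifies $\beta_\cX \circ \iota_c$ with the universal Hall coalgebra functor of $\cX_{1,\bullet}$; since $\cX_{1,\bullet}$ is likewise $2$-Segal, this restriction is symmetric monoidal as well. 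Definition~\ref{DefnLaxBialg} then yields that $\beta_\cX$ is a lax bialgebra.

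I expect the only real work to lie in the middle step, namely the clean identification $\cX \circ (-)^h = \alpha_{\cX_{\bullet,1}}$ and its dual. The potential pitfalls are bookkeeping ones: confirming that the horizontal extension $(-)^h$ genuinely selects the simplicial direction giving $\cX_{\bullet,1}$ rather than $\cX_{1,\bullet}$, and that the uniqueness clause in the universal property of $\csSetfop$, together with the compositionality of $\csmtSpan{-}$, legitimately transfers an equality of finite-limit-preserving functors into an equality of symmetric monoidal functors of bispans. Once these are pinned down, the appeal to the double $2$-Segal hypothesis via Definition~\ref{Defn:2Seg} is purely formal.
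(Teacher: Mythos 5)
Your proposal is correct and follows essentially the same route as the paper: transport the compatibility squares of Lemma~\ref{BetaAlphaComp} along the symmetric monoidal functor $\cX$, identify the restrictions $\beta_\cX\circ\iota_a$ and $\beta_\cX\circ\iota_c$ with $\alpha_{\cX_{\bullet,1}}$ and $\chi_{\cX_{1,\bullet}}$, and invoke Definition~\ref{Defn:2Seg} together with the double $2$-Segal hypothesis. If anything, you spell out more carefully than the paper does the identification $\cX\circ(-)^h=\alpha_{\cX_{\bullet,1}}$ via the universal property of $\csSetfop$, which the paper's proof treats as immediate.
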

\begin{defn}
\label{UnivBialgdefn}
 The {\em universal Hall bialgebra} of a double $2$-Segal object $\cX$ is $\cX_{1,1}$ equipped with the lax bialgebra structure of Theorem \ref{UnivBialg}
\end{defn}
\begin{proof}
 By Lemma \ref{BetaAlphaComp} the following diagram commutes for any bisimplicial object $\cX$
 \begin{equation*}
 \xymatrixrowsep{.9pc} \xymatrix{ \csmAlg \ar[d]_-{\iota_a} \ar@{~>}[rd]^-{\alpha_{\cX_{\bullet,1}}} & \\
 \csmBialg \ar@{~>}[r]^-{\beta_\cX} & \csmtSpan{\cC} \\
 \csmCoalg \ar[u]^-{\iota_c} \ar@{~>}[ru]_-{\chi_{\cX_{1,\bullet}}} & }
\end{equation*}
By Definition \ref{Defn:2Seg}, a simplicial object $\cY \in \cC_\Simp$ is $2$-Segal if and only if $\alpha_{\cY}$ and $\chi_{\cY}$ are symmetric monoidal functors. Therefore $\beta_\cX$ endows $\cX_{1,1}$ with a lax bialgebra structure when $\cX_{\bullet,1}$ and $\cX_{1,\bullet}$ are $2$-objects.
\end{proof}

 \bibliographystyle{plain}
\bibliography{refs}

 \end{document}